\newtheorem{proposition}{Proposition}[section]
\newtheorem{theorem}[proposition]{Theorem}
\newtheorem{lemma}[proposition]{Lemma}
\newtheorem{remark}[proposition]{Remark}
\newtheorem{example}[proposition]{Example}
\newenvironment{proof}{{\noindent \em Proof.}}{\hfill $\fbox{}$ \vspace*{5mm}}
\numberwithin{equation}{section}
\numberwithin{algorithm}{section}
\numberwithin{table}{section}
\numberwithin{figure}{section}
\newcommand{\ba}{{\bf a}}
\newcommand{\bb}{{\bf b}}
\newcommand{\bfe}{{\bf e}}
\newcommand{\bfo}{{\bf 1}}
\newcommand{\bg}{{\bf g}}
\newcommand{\bu}{{\bf u}}
\newcommand{\bc}{{\bf c}}
\newcommand{\bv}{{\bf v}}
\newcommand{\la}{\lambda}
\newcommand{\La}{\Lambda}
\newcommand{\co}{\mathcal{O}}
\newcommand{\fl}{{\rm fl}}
\newcommand{\diag}{{\rm diag}}
\newcommand{\off}{\mathrm{off}}
\newcommand{\rank}{{\rm rank}}
\newcommand{\op}{{\rm op}}
\newcommand{\R}{{\mathbb R}}
\newcommand{\Rn}{{\mathbb R}^n}
\newcommand{\Rnn}{{\mathbb R}^{n\times n}}
\newcommand{\Rmn}{{\mathbb R}^{m\times n}}
\newcommand{\Rmm}{{\mathbb R}^{m\times m}}
\newcommand{\BE}{\begin{equation}}
\newcommand{\EE}{\end{equation}}
\begin{document}

\title{\bf A mixed precision preconditioned Jacobi method for the symmetric eigenvalue problem}
\author{Zhiyuan Zhang\thanks{School of Mathematical Sciences, Xiamen University, Xiamen 361005, People's Republic of China (zyzhang510zg@stu.xmu.edu.cn).}
\and Zheng-Jian Bai\thanks{Corresponding author. School of Mathematical Sciences,  Xiamen University, Xiamen 361005, People's Republic of China (zjbai@xmu.edu.cn). The research of this author was partially supported by the National Natural Science Foundation of China grant 12371382.}
}
\maketitle

\begin{abstract}
The eigenvalue problem is a fundamental problem in scientific computing. In this paper, we first give the error analysis for  a single step or sweep of Jacobi's method in floating point arithmetic. Then we propose a mixed precision preconditioned Jacobi method for the symmetric eigenvalue problem: We first compute the eigenvalue decomposition of a real symmetric matrix by an eigensolver at low precision and we obtain a low-precision matrix of eigenvectors; Then by using the high-precision modified Gram-Schmidt orthogonalization process, a high-precision orthogonal matrix is obtained,  which is used as an initial guess for Jacobi's method. The rounding error analysis of  the proposed  method is established under some conditions.  We also present a mixed precision preconditioned  one-sided Jacobi method for the singular value problem and the corresponding rounding error analysis is discussed. Numerical experiments on CPUs and GPUs are reported to illustrate the efficiency of the proposed method over the original  Jacobi method.

\vspace{3mm}
{\bf Keywords}. Symmetric eigenvalue problem, singular value problem,  Jacobi's method, floating point arithmetic, mixed precision, rounding error analysis
\end{abstract}

\vspace{3mm}
{\bf AMS subject classifications.} 65F15, 65G50

\section{Introduction} \label{Sec:Int}
The symmetric eigenvalue problem has widespread applications in scientific computing such as engineering computing \cite{Datta-1995,Yang-2005}, numerical partial differential equations \cite{Larsson.Thomee-2003}, and computing chemistry \cite{Davidson-1975}, etc.

The solution strategy of the symmetric eigenvalue problem  depends on the structure of the given symmetric matrix and the desirable eigenvalues with or without associated eigenvectors. For example, when  only a few extreme eigenvalues of a large, sparse, and symmetric  matrix are desired, Lanczos method and Jacobi-Davidson method \cite{Golub.VanLoan-2013, Sleijpen.VanderVorst-2000} are recommended. In general, a symmetric matrix can be reduced to tridiagonal form by finite Householder reflections or Givens rotations and there are some popular tridiagonalization based  strategies for the symmetric eigenvalue problem \cite{Golub.VanLoan-2013,Parlett-1998}, e.g., the symmetric QR algorithm \cite{Bowdler.Martin.Reinsch.Wilkinson-1968}, the divide-and-conquer method \cite{Gu.Eisenstat-1995}, the bisection method \cite{Barth.Martin.Wilkinson-1967}, Sturm sequence method \cite{Gupta-1972} and the method of multiple relatively robust representations (MR$^3$) \cite{Dhillon.Parlett-2004}.  Compared with these tridiagonalization based algorithm, there is another method directly applied to the original real symmetric matrix, i.e., Jacobi's method. The Jacobi method is a very old method for diagonalizing a real symmetric matrix \cite{Jacobi-1846}. Recently, the Jacobi method has received much attention due to its natural suitability for parallel computation \cite{Becka.Oksa-2020} and high accuracy in finite precision \cite{Demmel.Veselic-1992}. Another interesting aspect of Jacobi's method with proper procedure ordering shows sweep-quadratic convergence rate after sufficient iterations \cite{Ruhe-1968,vanKempen-1966}. However, the Jacobi method is slower than a method based on tridiagonalization since it is conjectured that $\co (n^3 \log n)$ operations are required for its standard implementation \cite{Slapnicar-2014}. For an overview of the symmetric eigenvalue problem, one may refer to \cite{Parlett-1998,Wilkinson-1965} and \cite[Chap.7, Chap,8, Chap.11]{Golub.VanLoan-2013}.

To improve the  efficiency of a numerical solver, in many engineering applications, one of the emerging strategies is to combine different precision arithmetics \cite{Baboulin.Buttari.Dongarra.Kurzak.Langou.Langou.Luszczek.Tomov-2009}. In the past decades, the most common IEEE 754 floating-point arithmetic in scientific computing has mainly been carried out in double precision (64 bit) and single precision (32 bit) \cite{IEEE-2008}. Theoretically, single precision runs twice as fast as double precision in both communication and computation cost. And these two formats are supported by most of hardware architectures \cite{Abdelfattah.Anzt.Boman-2021}. Recently, half precision (16-bit) floating point arithmetic has  gradually been  popular in the machine learning community.
Half precision arithmetic is already available in some hardware (e.g., the NVIDIA V100 GPU), which runs faster in machine learning applications and also reduces memory storage and energy consumption. For higher precisions, there exists quadruple precision (128 bit) in some softwares \cite{Higham.Mary-2021} such as Advanpix Multiprecision Computing Toolbox for {\tt MATLAB} \cite{software-MCT}.

Recently, based on different floating point precisions, many mixed precision algorithms have been proposed \cite{Carson.Higham-2018, Yamazaki.Tomov.Dongarra-2015, Yang.Fox.Sanders-2021}. For mixed precision algorithms in numerical linear algebra, there are two survey papers \cite{Abdelfattah.Anzt.Boman-2021,Higham.Mary-2021}.
For the general eigenvalue problem, an earlier work given by Dongarra et al. \cite{Dongarra-1982,Dongarra.Moler.Wilkinson-1983} was the mixed precision iterative refinement based on Newton's method for computing eigenpairs of a matrix, which was extended to solving the symmetric eigenvalue problem by using the Sherman-Morrison formula  \cite{Tsai-2020}.
For the symmetric eigenvalue problem,  Petschow et al. \cite{Petschow.Quintana-Orti.Bientinesi-2014} proposed a mixed precision MR$^3$-based eigensolver with improved accuracy and negligible performance penalty. Ogita and Aishima \cite{Ogita.Aishima-2018,Ogita.Aishima-2019} developed another novel iterative refinement for the symmetric eigenvalue decomposition. Very most recently, Gao et al. \cite{Gao.Ma.Shao-2022} proposed an elaborate mixed precision Jacobi singular value decomposition (SVD) algorithm which can achieve about 2x speedup comparable to LAPACK in x86-64 architecture.

The rounding error analysis has undisputed importance in numerical analysis, especially with the rise of mixed-precision computations in scientific computing. There exist some theoretical results on the error analysis of Jacobi's method.  In \cite[p.279]{Wilkinson-1965},   it was showed that the computed diagonal entries of the updated matrix after some sweeps are closed to the eigenvalues of the original symmetric matrix $A$ with an error bound proportional to the product of machine precision and the norm of $A$. In  \cite{Barlow.Demmel-1990, Demmel.Veselic-1992},   it was established that  the  Jacobi method can compute the eigenvalues of  a real symmetric positive definite diagonal scaling matrix with a uniformly relative  accuracy bound (see \cite{Mathias-1995} for extended error analysis results). In  \cite{Davies.Higham.Tisseur-2001}, a backward error analysis was provided for the Cholesky–Jacobi method for the  symmetric definite generalized eigenproblem. In \cite{Dopico.Molera.Moro-2003}, an high relative  accuracy bound  was provided for an orthogonal algorithm for the symmetric eigenproblem. In \cite{Dopico.Koev.Molera-2009},  it was showed that the eigenvalues of a symmetric matrix $A$ via  the  implicit Jacobi algorithm are  computed with an error bound proportional to the product of machine precision and the spectral condition number of the eigenvector matrix of $A$. 

In this paper,  we first give the error analysis for  a single step or sweep of the Jacobi method in floating point arithmetic.  We derive the error bounds of the iterative matrix and its off-diagonal entries updated after one Jacobi rotation, and the computed diagonal entries of the updated matrix  are closed to the eigenvalues of the original symmetric matrix with an error bound proportional to the product of machine precision and the norm of the original matrix. The  error bounds of the off-diagonal entries of the iterative matrix updated after one sweep are established for the  general and row cyclic order with distinct eigenvalues and the row-cyclic order with one multiple eigenvalue. Then we propose a mixed precision preconditioned Jacobi method  for the symmetric eigenvalue problem. That is, by using an eigensolver to computing the eigenvalue decomposition of a real symmetric matrix at low precision, we can obtain a low-precision matrix of eigenvectors; Then, by using the high-precision modified Gram-Schmidt (MGS) orthogonalization process, a high-precision orthogonal matrix is obtained, which is employed as an initial guess for the Jacobi method. We give the rounding error analysis of the proposed mixed precision preconditioned  Jacobi method with the cyclic ordering under some conditions.  We also present a mixed precision preconditioned  one-sided Jacobi method for the singular value problem and the corresponding rounding  error analysis is studied.  Finally, we report some numerical experiments to illustrate the efficiency of the proposed method over the original Jacobi method.

Throughout this paper, we use the following notation. Let $\Rmn$ be the set of all $m$-by-$n$ real matrices and $\Rn=\R^{n\times 1}$. $I_n$ is an identity matrix of order $n$ and $\bfe_s$ is the $s$th column of $I_n$. $\bfo_n$ is an $n$-vector of all ones. Let $|\cdot|$ be the absolute value of a real number.  Let $\|\cdot \|$ and $\| \cdot\|_F$ be the Euclidean vector norm or its induced matrix norm and the Frobenius matrix norm, respectively. The symbol ``$\otimes$" means the Kronecker product. The superscript ``$\cdot^T$'' stands for the transpose of a matrix or vector.
For a symmetric matrix $G\in\Rnn$, we denote by $\la_1(G) \geq  \la_2(G) \geq\cdots\geq\la_{n}(G)$ its eigenvalues, arranged in decreasing order. For a matrix $G\in\Rmn$, we denote by $\sigma_1(G) \geq  \sigma_2(G) \geq\cdots\geq\sigma_{\min\{m,n\}}(G)\equiv \sigma_{\min}(G)\geq  0$ its singular values, arranged in decreasing order. For a matrix $G=(g_{ij})\in\Rnn$, let $\off(G):= G - \diag(g_{11},\ldots,g_{nn})$ and $\bg_j$ be the $j$th column vector of $G$ for $j=1,\ldots,n$.

The rest of the paper is organized as follows. In Section \ref{Sec:Pre} we review some error analysis results on classical numerical methods for the eigenvalue problem and the singular value problem. In Section \ref{Sec:Jacobi} we give the error analysis for  a single step or sweep of the Jacobi method in floating point arithmetic. In Section \ref{Sec:Alg} we propose a mixed precision preconditioned Jacobi method for the symmetric eigenvalue problem. The rounding  error analysis is also discussed. In Section \ref{Sec:SVD} we present a  mixed precision preconditioned one-sided Jacobi method for the singular value problem. In Section \ref{Sec:Exp} we present some numerical tests to demonstrate the efficiency of the proposed methods. Some concluding remarks are given in Section \ref{Sec:Con}.

\section{Preliminaries}	\label{Sec:Pre}
In this section, we review some error analysis results on some numerical methods for symmetric eigenvalue problems and singular value problems. We first recall the following error bounds for the MGS method \cite[Theorem 19.13]{Higham-2002}.
\begin{lemma}  \label{lea:MGS}
Let $A \in \R^{m\times n}$ with $\rank(A)=n$. Suppose  the MGS method computes the approximate QR factorization $A\approx \hat{Q}\hat{R}$ in  precision $\upsilon$, where $\hat{R}\in\Rnn$ is upper triangular and $\hat{Q}\in\Rmn$. Then there exist constants $\eta_i\equiv \eta_i(m,n)$ {\rm(}$i=1,2,3${\rm)} such that
$
\| A - \hat{Q}\hat{R} \| \le  \eta_1 \|A\| \upsilon$, $\|\hat{Q}^T \hat{Q} -  I_n \| \le \eta_2 \kappa(A)\upsilon,
$
and $\hat{Q} + \delta Q$ is orthogonal with $\|\delta Q \| \le \eta_3\kappa(A)\upsilon$,
where $\kappa(A)=\sigma_1(A)/\sigma_{\min}(A)$ is the condition number of $A$.
\end{lemma}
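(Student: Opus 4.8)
The plan is to follow the classical forward rounding error analysis of the MGS algorithm, treating the three claims in turn, with the loss-of-orthogonality bound being the crux.

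First, for the residual bound $\|A - \hat Q\hat R\| \le \eta_1\|A\|\upsilon$, I would carry out a column-by-column analysis of the recurrence. At the $j$th step MGS forms the partial updates $a_j^{(k+1)} = a_j^{(k)} - r_{kj}\hat q_k$ with $r_{kj} = \fl(\hat q_k^T a_j^{(k)})$, followed by the normalization $\hat q_j = a_j^{(j)}/r_{jj}$. Applying the standard model of floating-point arithmetic to each inner product and each SAXPY and collecting the first-order error terms yields $a_j = \sum_{k\le j} r_{kj}\hat q_k + \Delta a_j$ with $\|\Delta a_j\| \le c(m,n)\|a_j\|\upsilon$; assembling these columnwise gives the residual bound with a modest polynomial constant $\eta_1$. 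This step is routine.

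Second, the orthogonality bound $\|\hat Q^T\hat Q - I_n\| \le \eta_2\kappa(A)\upsilon$ is the main obstacle, since a direct analysis of the recurrence would only deliver a bound scaling like $\kappa(A)^2\upsilon$. The decisive tool is the equivalence, due to Bj\"orck and Paige, between MGS applied to $A$ and Householder QR applied to the augmented matrix $\tilde A = \begin{pmatrix} 0_{n\times n}\\ A\end{pmatrix}\in\R^{(m+n)\times n}$: in exact arithmetic both produce the same upper-triangular factor, and in finite precision the computed $\hat R$ and the computed MGS vectors $\hat q_j$ coincide with the corresponding quantities arising in Householder QR on $\tilde A$. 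I would then invoke the backward stability of Householder QR to write $\tilde A + \Delta\tilde A = \tilde Q\hat R$ with $\tilde Q$ exactly orthogonal and $\|\Delta\tilde A\|$ of order $\|A\|\upsilon$, and extract from the lower block of $\tilde Q$ the relation between $\hat Q$ and an orthonormal set. Since $\tilde A^T\tilde A = A^TA$, the matrix $\tilde A$ has exactly the singular values of $A$, so $\sigma_{\min}(\hat R)$ is close to $\sigma_{\min}(A)$ and $\|\hat R^{-1}\| \approx 1/\sigma_{\min}(A)$; inverting $\hat R$ to isolate the deviation of $\hat Q$ from orthonormality converts the absolute $\|A\|\upsilon$ perturbation into the relative bound $\eta_2\kappa(A)\upsilon$, with the extra factor $\kappa(A) = \sigma_1(A)/\sigma_{\min}(A)$ entering precisely through this inversion.

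Finally, for the existence of a nearby orthogonal matrix, I would take the orthogonal polar factor $\hat Q + \delta\hat Q := \hat Q(\hat Q^T\hat Q)^{-1/2}$, which is well defined once $\|\hat Q^T\hat Q - I_n\| < 1$, as guaranteed by the previous step for $\upsilon$ sufficiently small. Writing $\hat Q^T\hat Q = I_n + E$ with $\|E\| \le \eta_2\kappa(A)\upsilon$, the expansion $(I_n+E)^{-1/2} = I_n - \tfrac12 E + O(\|E\|^2)$ gives $\delta\hat Q = \hat Q\bigl((I_n+E)^{-1/2} - I_n\bigr)$ and hence $\|\delta\hat Q\| \le \tfrac12\|\hat Q\|\,\|E\| + O(\upsilon^2)$; since $\|\hat Q\|$ is bounded by a constant near $1$, this yields $\|\delta\hat Q\| \le \eta_3\kappa(A)\upsilon$. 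The three estimates together establish the lemma, the constants $\eta_i$ being explicit low-degree polynomials in $m$ and $n$.
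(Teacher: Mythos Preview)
The paper does not prove this lemma at all: it is stated as a quotation of a known result, with the citation \cite[Theorem 19.13]{Higham-2002} immediately preceding it, and no proof is given. So there is nothing in the paper to compare your argument against.

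That said, your sketch is the standard route and is essentially the proof one finds in Higham's book. The residual bound is indeed a routine columnwise forward error analysis; the loss-of-orthogonality bound with only a single factor of $\kappa(A)$ (rather than $\kappa(A)^2$) does require the Bj\"orck--Paige Householder-on-the-augmented-matrix device you describe; and the polar-factor construction is the usual way to produce the nearby exactly orthonormal $\hat Q+\delta\hat Q$. One small point: in the second step you should be explicit that the orthonormal columns extracted from the lower block of the exactly orthogonal $\tilde Q$ are what you compare $\hat Q$ to, and that the upper block of $\tilde Q$ has norm of order $\kappa(A)\upsilon$ (this is where the single $\kappa(A)$ enters, not merely through ``inverting $\hat R$''); as written the mechanism is slightly blurred. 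Otherwise the plan is sound.
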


On the error analysis for symmetric eigenvalue problems and  singular value problems, we have the following results  (see \cite[pp.104--105] {Anderson.Bai.Bischof-1999}  and \cite[pp.112--113]{Anderson.Bai.Bischof-1999}).
\begin{lemma} \label{lea:symeig}
Let  $A \in \R^{n\times n}$ be a symmetric matrix. The computed symmetric eigenvalue decomposition $A\approx\hat{P}\hat{\La}\hat{P}^T$  with $\hat{P}\in\Rnn$ and $\hat{\La}=\diag(\hat{\la}_1,\ldots,\hat{\la}_n)$ $\in\Rnn$ via any eigensolver in LAPACK or EISPACK  in precision  $\upsilon$ is nearly the exact symmetric Schur decomposition of $A+E$, i.e.,
$
A+E = (\hat{P} + \delta P)\hat{\La} (\hat{P} + \delta P)^T,
$
where $\| E \| \le p(n) \| A \| \upsilon$ and $\hat{P} + \delta P$ is orthogonal with $\| \delta P \| \le p(n)\upsilon$. Here, $p(n)$ is a modestly growing function of $n$.
\end{lemma}
\begin{lemma} \label{lea:svd}
Let  $A \in \R^{m \times n}$ be a real matrix $(m \ge n)$. The computed SVD $A\approx\hat{U}\hat{\Sigma}\hat{V}^T$ with $\hat{U}\in\Rmm$, $\hat{V}\in\Rnn$, and $\hat{\Sigma}=\diag(\hat{\sigma}_1,\ldots,\hat{\sigma}_n)\in\Rmn$ via any SVD solver in LAPACK, LINPACK or  EISPACK  in precision  $\upsilon$  is nearly the exact SVD of $A+E$, i.e.,
$
A + E = (\hat{U} + \delta U) \hat{\Sigma} (\hat{V} + \delta V)^T,
$
where $\| E\| \le p(m,n) \| A \| \upsilon$ and $\hat{U} + \delta U$ and $\hat{V} + \delta V$ are both orthogonal with $\| \delta U \| \le p(m,n) \upsilon$ and $\| \delta V \| \le p(m,n) \upsilon$. Here, $p(m,n)$ is a modestly growing function of $m$ and $n$.
\end{lemma}

Finally, we recall the perturbation bounds for eigenvalues and singular values   \cite[p.442 and p.487]{Golub.VanLoan-2013}.
\begin{lemma}\label{lea:perturbation-eig}
If $G$ and $G+E$ are $n\times n$ real symmetric matrices, then
$|\la_j(G+E)-\la_j(G)| \le  \|E\|$ for $j=1,\ldots,n$ and
$\sum_{j=1}^n(\lambda_j(G+E)-\lambda_j(G))^2\le \|E\|_F^2$.
\end{lemma}
\begin{lemma}\label{lea:perturbation-svd}
If $G$ and $G+E$ are $m\times n$  real matrices with $m\ge n$, then
$|\sigma_j(G+E)-\sigma_j(G)| \le \|E\|$ for $j=1,\ldots,n$.
\end{lemma}


\section{Jacobi's method in floating point arithmetic} \label{Sec:Jacobi}
In this section, we first review Jacobi's method for the symmetric eigenvalue problem. Then we rework the
error analysis for one step/weep of Jacobi's method in floating point arithmetic.

\subsection{Jacobi's method}
Let $A$ be an $n\times n$ real symmetric matrix. The Jacobi method aims to construct a sequence of orthogonal updates $A^{(k+1)} = J_k^TA^{(k)}J_k$ such that the off-diagonal entries of $A^{(k+1)}$ are closer to zeros than $A^{(k)}$, where  $A^{(0)}=A$ and $J_k$ is a Jacobi rotation. When $\off(A^{(k)})$ is close to the zero matrix sufficiently, a computed eigenvalue decomposition of the original matrix $A$ is available.

Define a Jacobi rotation $J(i,j;c,s)$ by
\BE \label{eq:JacMat}
J(i,j;c,s)
= I_n + [\bfe_i, \bfe_j]
\begin{bmatrix} c -1 & s \\ - s & c-1 \end{bmatrix}
\begin{bmatrix} \bfe_i^T \\ \bfe_j^T \end{bmatrix},
\EE
where $c,s\in\R$ is such that $c^2+s^2=1$.
Then we have the following result \cite[\S 8.5]{Golub.VanLoan-2013}.

\begin{lemma}\label{lem:offa-jr}
Let $A\in\Rnn$ be a symmetric matrix. Then, for any index pair $(i,j)$ with $1\le i<j\le n$, there exists a Jacobi rotation  $J=J(i,j;c,s)$
defined by \eqref{eq:JacMat}
such that, for the updated matrix $B=J^TAJ$,
\[
b_{ij}=b_{ji}=0,\quad b_{ii}^2+b_{jj}^2=a_{ii}^2+a_{jj}^2+2a_{ij}^2,
\]
where  $c =(1+t^2)^{-1/2}$ and $s=tc$ with
$t =1/(\mu + \sqrt{1+\mu^2})$ if $\mu\ge 0$ and $t=1/(\mu - \sqrt{1+\mu^2})$ if $\mu< 0$ for $\mu =(a_{jj} - a_{ii})/(2a_{ij})$. If $a_{ij}=0$, then we set $(c,s)=(1,0)$.
\end{lemma}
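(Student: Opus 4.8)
The plan is to exploit the fact that the Jacobi rotation $J = J(i,j;\theta)$ acts as the identity outside the two coordinates $i$ and $j$, so the entire computation reduces to a $2\times 2$ problem. Writing $R = \begin{bmatrix} c & s \\ -s & c \end{bmatrix}$ for the nontrivial block of $J$ and $\hat{A} = \begin{bmatrix} a_{ii} & a_{ij} \\ a_{ij} & a_{jj} \end{bmatrix}$ for the corresponding principal submatrix of $A$, the updated block is $\hat{B} = R^T \hat{A} R$, and all entries of $B = J^TAJ$ lying outside rows and columns $i,j$ coincide with those of $A$. Thus it suffices to choose $c,s$ so that the off-diagonal entry of $\hat{B}$ vanishes.

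First I would expand $\hat{B} = R^T \hat{A} R$ and read off its off-diagonal entry, obtaining
\[
b_{ij} = cs\,(a_{ii}-a_{jj}) + (c^2 - s^2)\,a_{ij}.
\]
When $a_{ij} = 0$ this already vanishes with the choice $(c,s) = (1,0)$, which settles the degenerate case. Otherwise, dividing by $a_{ij}$, substituting $\mu = (a_{jj}-a_{ii})/(2a_{ij})$ and $t = s/c$ (so that $t = \tan\theta$), the requirement $b_{ij}=0$ becomes the quadratic $t^2 + 2\mu t - 1 = 0$, whose roots are $t = -\mu \pm \sqrt{1+\mu^2}$.

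To secure the bound $|\theta| \le \pi/4$, i.e.\ $|t| \le 1$, I would select the root of smaller magnitude in each case: for $\mu \ge 0$ this is $t = \sqrt{1+\mu^2} - \mu$, and for $\mu < 0$ it is $t = -\mu - \sqrt{1+\mu^2}$. A short rationalization then confirms that these agree with the stated closed forms $t = 1/(\mu+\sqrt{1+\mu^2})$ and $t = 1/(\mu-\sqrt{1+\mu^2})$, respectively, and that $|t| \le 1$ indeed holds in both cases. Setting $c = (1+t^2)^{-1/2}$ and $s = tc$ then recovers a genuine rotation satisfying $c^2 + s^2 = 1$.

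For the norm identity, I would invoke the invariance of the Frobenius norm under orthogonal similarity: since $\hat{B} = R^T\hat{A}R$ with $R$ orthogonal, we have $\|\hat{B}\|_F^2 = \|\hat{A}\|_F^2$, that is, $b_{ii}^2 + b_{jj}^2 + 2b_{ij}^2 = a_{ii}^2 + a_{jj}^2 + 2a_{ij}^2$. Imposing $b_{ij}=0$ yields the claimed relation $b_{ii}^2 + b_{jj}^2 = a_{ii}^2 + a_{jj}^2 + 2a_{ij}^2$. The whole argument is elementary; the only point requiring genuine care is the consistent selection of the smaller root together with the verification that it matches the sign-dependent closed form while respecting $|\theta|\le\pi/4$.
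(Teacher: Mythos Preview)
Your argument is correct and is precisely the standard derivation of the Jacobi angle found in \cite[\S 8.5]{Golub.VanLoan-2013}. The paper itself does not supply a proof of this lemma; it merely states the result with a citation to that reference, so there is nothing further to compare.
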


From Lemma \ref{lem:offa-jr}, we observe that the updated matrix $B=J^TAJ$ agrees with $A$ except in rows and columns $i$ and $j$ and
$\|\off(B) \|_F^2 = \|B \|_F^2-\sum_{i=1}^{n}b_{ii}^2=\|A \|_F^2 -\sum_{i=1}^{n}a_{ii}^2
+ (a_{ii}^2 + a_{jj}^2 - b_{ii}^2 -b_{jj}^2) = \|\off(A) \|_F^2-2a_{ij}^2$.

To minimize $\|\off(B) \|_F$, a classical strategy is to choose the index $(i,j)$ such that the off-diagonal element $a_{ij}$ has the largest absolute value, i.e., $|a_{ij}|=\max_{p\neq q}|a_{pq}|$. This leads to the classical  Jacobi algorithm, which is stated as Algorithm {\rm \ref{alg:Jacobi}}.
\begin{algorithm}[!htb]
	\caption{Classical Jacobi's method for the symmetric eigenvalue problem.} \label{alg:Jacobi}
    \begin{algorithmic}[1]
    \REQUIRE  A symmetric matrix $A\in\Rnn$ and a  tolerance $\epsilon>0$. Let $P=I_n$.
    \WHILE{$\| \off(A) \|_F > \epsilon \| A\|_F$} 
     \STATE  Choose $(i,j)$ such that $|a_{ij}|=\max_{p\neq q}|a_{pq}|$.
	\STATE  Compute a cosine-sine group $(c,s)$ as in Lemma  \ref{lem:offa-jr}.
	\STATE Set $A=J(i,j;c,s)^TAJ(i,j;c,s)$ and $P=PJ(i,j;c,s)$.
	\ENDWHILE
    \end{algorithmic}
\end{algorithm}

Let $A_k$ be the matrix $A_0=A$ after $k$ Jacobi updates. Then Algorithm {\rm \ref{alg:Jacobi}} converges linearly in the sense that
$
\| \off(A_k) \|_F^2 \le \left(1 - 1/N \right)^k \| \off(A_0)\|_F^2
$
 \cite[\S 8.5]{Golub.VanLoan-2013},
where $N = n(n-1)/2$. Here, we refer to $N$ Jacobi updates as a {\em sweep}. The quadratic convergence of Algorithm \ref{alg:Jacobi} was established in \cite{Schonhage-1964} in the sense that for some constant $\alpha>0$,
$
\| \off(A_{k+N}) \|_F \le \alpha \| \off(A_k) \|_F^2
$
for $k$ sufficiently large.

We note that it is expensive to find the optimal index  $(i,j)$ in each Jacobi update.  A feasible alternative   is to update $A$  by rows or columns. This is the so-called {\em cyclic Jacobi} algorithm, which is described   as Algorithm {\rm \ref{alg:rcJacobi}} \cite{Forsythe.Henrici-1960}.
\begin{algorithm}[!htb]
	\caption{Cyclic Jacobi's method for the symmetric eigenvalue problem.} \label{alg:rcJacobi}
    \begin{algorithmic}[1]
    \REQUIRE  A symmetric matrix $A\in\Rnn$ and a  tolerance $\epsilon>0$. Let $P=I_n$.
	\WHILE{$\| \off(A) \|_F > \epsilon \| A\|_F$}
    \STATE  Choose $(i,j)$ in a general cyclic order or in the row cyclic order. \hspace*{0.02in}
	\STATE Compute a cosine-sine group $(c,s)$ as in Lemma  \ref{lem:offa-jr}.
    \STATE Set $A=J(i,j; c,s)^TAJ(i,j;c,s)$ and $P=PJ(i,j; c,s)$.
    \ENDWHILE
	\end{algorithmic}
\end{algorithm}

On the quadratic convergene of Algorithm  \ref{alg:rcJacobi}, one may refer to \cite{Henrici-1958,vanKempen-1966,Wilkinson-1962}.

\subsection{Error analysis for one step of Jacobi's method in floating point arithmetic}
In this subsection, we consider the error analysis for one step of Jacobi's method in floating point arithmetic.  We use the standard model for floating point arithmetic \cite[pp.40]{Higham-2002}
\begin{eqnarray*}
&&\fl(x\,\op \,y) = (x\,\op \,y)(1+\delta_1)  = (x\,\op\,y)/(1+\delta_2),\quad|\delta_1|,|\delta_2|\le u,\quad \op= +,-,*,/, \\
&&\fl(\sqrt{x}) = \sqrt{x}(1+\delta),\; |\delta| \le u,
\end{eqnarray*}
where $u$ is the unit roundoff. Here, $\fl(x)$ means floating-point operation of a real  number $x$ at  precision $u$.

We also recall the following lemma (see for instance \cite[pp.63]{Higham-2002}).

\begin{lemma}
If $|\delta_i|\le u$ and $\xi_i = \pm 1$ for $i=1,2,\ldots,n$, then
$
\prod_{i=1}^n (1+\delta_i)^{\xi_i} = 1 + \theta_n,
$
where $|\theta_n| \le \gamma_n := (1-u)^{-n}-1$.
\end{lemma}

Let
$
\tilde{\gamma}_j := (1-u)^{-wj}-1
$
for a small integer constant $w>0$ whose exact value is unimportant. In what follows, we denote  by $\tilde{\theta}_j$ a quantity with $|\tilde{\theta}_j| \le \tilde{\gamma}_j$.

For  the rounding error analysis for the computed Jacobi rotation $\hat{J}(i,j;\hat{c},\hat{s})$, we have the following result  \cite[Lemma 3.2]{Davies.Higham.Tisseur-2001}.  Here, the computed value in floating point arithmetic is denoted by $\hat{\cdot}$.
\begin{lemma} \label{lea:JacobiAngle}
Let $A\in\Rnn$ be symmetric. Suppose the exact  Jacobi rotation  $J=J(i,j;c,s)$ constructed by Lemma {\rm\ref{lem:offa-jr}} is such that $J^TAJ$ has zeros in the $(i,j)$ and $(j,i)$ positions.
Then the computed  Jacobi rotation $\hat{J}=\hat{J}(i,j;\hat{c},\hat{s})$ is such that
$\hat{c} = c(1+\tilde{\theta}_1)$, $\hat{s} = s(1+\tilde{\theta}_1^\prime)$, $\hat{t} = t(1+\tilde{\theta}_1^{\prime\prime})$, where $c,s$ and $t$ are defined by Lemma {\rm\ref{lem:offa-jr}}.
\end{lemma}

We have the following result after one step of Jacobi's method.

\begin{lemma}\label{lem:pqrk}
If one step of Jacobi's method is performed in the $(p,q)$ plane on the matrix $A_k = (\hat{a}_{ij}^{(k)})$ with the computed Jacobi rotation $\hat{J}_k = J(p,q;\hat{c}_k,\hat{s}_k)$
and the exact Jacobi rotation $J_k = J(p,q; c_k,s_k)$, then the computed  $A_{k+1} = (\hat{a}_{ij}^{(k+1)})$ satisfies
{\rm(i)} element invariance:
\BE \label{eq:uni-rot}
\hat{a}_{ij}^{(k+1)}=\hat{a}_{ij}^{(k)}\quad \mbox{$\forall i, j\neq p,q$},
\EE
{\rm(ii)} proximity to zero:
\BE \label{eq:bound-rot}
|\hat{a}_{pq}^{(k+1)}| \le (|\hat{a}_{pq}^{(k)}|+|s_kc_k| (|\hat{a}_{pp}^{(k)}|+|\hat{a}_{qq}^{(k)}|)) \tilde{\gamma}_4\le \sqrt{2|\hat{a}_{pq}^{(k)}|^2+|\hat{a}_{pp}^{(k)}|^2+|\hat{a}_{qq}^{(k)}|^2} \cdot \tilde{\gamma}_4,
\EE
and {\rm(iii)} sum of squares controllability:
\BE \label{eq:dec-rot}
|\hat{a}_{pj}^{(k+1)}|^2 + |\hat{a}_{qj}^{(k+1)}|^2 \le \big(|\hat{a}_{pj}^{(k)}|^2 + |\hat{a}_{qj}^{(k)}|^2\big)(1+2\tilde{\gamma}_4) \; \forall j \ne p,q.
\EE
\end{lemma}

\begin{proof}
We first show \eqref{eq:uni-rot}.
Observe that $A_{k+1}$ agrees with $A_k$ except in rows and columns $p$ and $q$. This implies that
\eqref{eq:uni-rot} holds.

Next, we show the inequality \eqref{eq:bound-rot}.
It follows from Lemma \ref{lea:JacobiAngle} that
\begin{eqnarray*}
\hat{a}_{pq}^{(k+1)} &=& \fl\left(
\begin{bmatrix} \hat{c}_k\\ - \hat{s}_k \end{bmatrix}^T\begin{bmatrix}\hat{a}_{pp}^{(k)} &  \hat{a}_{pq}^{(k)} \\ \hat{a}_{pq}^{(k)} & \hat{a}_{qq}^{(k)} \end{bmatrix}\begin{bmatrix} \hat{s}_k \\ \hat{c}_k \end{bmatrix}\right) \nonumber\\
&=& \fl(\hat{s}_k(\hat{c}_k\hat{a}_{pp}^{(k)}-\hat{s}_k\hat{a}_{pq}^{(k)})+\hat{c}_k(\hat{c}_k\hat{a}_{pq}^{(k)} -\hat{s}_k\hat{a}_{qq}^{(k)})) \nonumber\\
&=&\fl(\hat{s}_k(\hat{c}_k\hat{a}_{pp}^{(k)}-\hat{s}_k\hat{a}_{pq}^{(k)}))(1+\theta_1)+ \fl(\hat{c}_k(\hat{c}_k\hat{a}_{pq}^{(k)} -\hat{s}_k\hat{a}_{qq}^{(k)}))(1+\theta_1)\nonumber\\
&=& \hat{s}_k\cdot \fl(\hat{c}_k\hat{a}_{pp}^{(k)}-\hat{s}_k\hat{a}_{pq}^{(k)})(1+\theta_2)+ \hat{c}_k\cdot \fl(\hat{c}_k\hat{a}_{pq}^{(k)} -\hat{s}_k\hat{a}_{qq}^{(k)})(1+\theta_2')\nonumber\\
&=& \hat{s}_k (\hat{c}_k\hat{a}_{pp}^{(k)}(1+\theta_2'')-\hat{s}_k\hat{a}_{pq}^{(k)}(1+\theta_2'''))(1+\theta_2) \nonumber\\
&& + \hat{c}_k(\hat{c}_k\hat{a}_{pq}^{(k)}(1+\theta_2''')-\hat{s}_k\hat{a}_{qq}^{(k)}(1+\theta_2''''))(1+\theta_2') \nonumber\\
&=& \hat{s}_k \hat{c}_k\hat{a}_{pp}^{(k)}(1+\theta_4)-\hat{s}_k^2\hat{a}_{pq}^{(k)}(1+\theta_4')
+ \hat{c}_k^2\hat{a}_{pq}^{(k)}(1+\theta_4'')-\hat{s}_k\hat{c}_k\hat{a}_{qq}^{(k)}(1+\theta_4''') \nonumber\\
&=& s_kc_k\hat{a}_{pp}^{(k)}(1+\tilde{\theta}_4)-s_k^2\hat{a}_{pq}^{(k)}(1+\tilde{\theta}_4')+ c_k^2\hat{a}_{pq}^{(k)}(1+\tilde{\theta}_4'')-s_kc_k\hat{a}_{qq}^{(k)}(1+\tilde{\theta}_4''') \nonumber\\
&=& s_kc_k\hat{a}_{pp}^{(k)}\tilde{\theta}_4 -s_k^2\hat{a}_{pq}^{(k)}\tilde{\theta}_4'+ c_k^2\hat{a}_{pq}^{(k)}\tilde{\theta}_4''-s_kc_k\hat{a}_{qq}^{(k)}\tilde{\theta}_4''' ,
\end{eqnarray*}
where the last equality uses the fact that $ \hat{a}_{pq}^{(k)}(c_k^2-s_k^2)+(\hat{a}_{pp}^{(k)}-\hat{a}_{qq}^{(k)})c_ks_k=0$. We note that $c_k^2+s_k^2=1$ and $|s_kc_k|\le1/2(c_k^2+s_k^2)=1/2$.
Then the inequality \eqref{eq:bound-rot} follows from the Cauchy-Schwarz inequality.

Finally, we show the inequality \eqref{eq:dec-rot}.
We have by 
Lemma \ref{lea:JacobiAngle}, for any $j \ne p,q$,
\begin{eqnarray*}
&& (\hat{a}_{pj}^{(k+1)})^2 + (\hat{a}_{qj}^{(k+1)})^2 = \big(\fl(\hat{c}_k \hat{a}_{pj}^{(k)} - \hat{s}_k \hat{a}_{qj}^{(k)})\big)^2 + \big(\fl(\hat{s}_k\hat{a}_{pj}^{(k)} + \hat{c}_k \hat{a}_{qj}^{(k)})\big)^2 \\
	& &\quad = \big(\hat{c}_k \hat{a}_{pj}^{(k)} (1+\theta_2) - \hat{s}_k \hat{a}_{qj}^{(k)}(1+\theta_2')\big)^2
+ \big(\hat{s}_k \hat{a}_{pj}^{(k)} (1+\theta_2'') + \hat{c}_k \hat{a}_{qj}^{(k)}(1+\theta_2''')\big)^2 \\
& &\quad = (c_k\hat{a}_{pj}^{(k)} (1+\tilde{\theta}_2) -s_k \hat{a}_{qj}^{(k)}(1+\tilde{\theta}_2'))^2
+ (s_k \hat{a}_{pj}^{(k)} (1+\tilde{\theta}_2'') + c_k \hat{a}_{qj}^{(k)}(1+\tilde{\theta}_2'''))^2 \\
& &\quad = c_k^2 (\hat{a}_{pj}^{(k)})^2 (1+\tilde{\theta}_4) + s_k^2 (\hat{a}_{qj}^{(k)})^2 (1+\tilde{\theta}_4')
- 2c_ks_k \hat{a}_{pj}^{(k)}\hat{a}_{qj}^{(k)}(1+\tilde{\theta}_4'')  \\
	& &\qquad +s_k^2 (\hat{a}_{pj}^{(k)})^2 (1+\tilde{\theta}_4''') + c_k^2 (\hat{a}_{qj}^{(k)})^2 (1+\tilde{\theta}_4'''')
+ 2c_ks_k \hat{a}_{pj}^{(k)}\hat{a}_{qj}^{(k)}(1+\tilde{\theta}_4''''') \\
	& &\quad  \le  \big((\hat{a}_{pj}^{(k)})^2 + (\hat{a}_{qj}^{(k)})^2\big)(1+\tilde{\gamma}_4) + 4 |s_kc_k| |\hat{a}_{pj}^{(k)}\hat{a}_{qj}^{(k)}|\tilde{\gamma}_4\\
	& &\quad  \le  \big((\hat{a}_{pj}^{(k)})^2 + (\hat{a}_{qj}^{(k)})^2\big) + \big((\hat{a}_{pj}^{(k)})^2 + (\hat{a}_{qj}^{(k)})^2 + 2 |\hat{a}_{pj}^{(k)}\hat{a}_{qk}^{(k)}|\big)\tilde{\gamma}_4\\
	& &\quad  \le \big((\hat{a}_{pj}^{(k)})^2 + (\hat{a}_{qj}^{(k)})^2\big) +2\big((\hat{a}_{pj}^{(k)})^2 + (\hat{a}_{qj}^{(k)})^2\big)\tilde{\gamma}_4 = \big((\hat{a}_{pj}^{(k)})^2 + (\hat{a}_{qj}^{(k)})^2\big) (1+2\tilde{\gamma}_4).
\end{eqnarray*}
This completes the proof.
\end{proof}

On the error  bound after one Jacobi rotation, we have the following result.
\begin{lemma}\label{lem:pqrk-err}
If one step of Jacobi's method is performed in the $(p,q)$ plane on the matrix $A_k = (\hat{a}_{ij}^{(k)})$ with the computed Jacobi rotation $\hat{J}_k = J(p,q;\hat{c}_k,\hat{s}_k)$ and the exact Jacobi rotation $J_k = J(p,q; c_k,s_k)$, then the computed  $A_{k+1}$ satisfies
\[
A_{k+1}= J_k^T A_k J_k+Y_k,\quad
\|Y_k\|_F \le \sqrt{(6+4\sqrt{2})\|G_k\|_F^2 + 2\|H_k\|_F^2}\cdot  \tilde{\gamma}_4,
\]
where  the symmetric matrix $G_k\in\Rnn$ has zero entries except that the entries at the intersections of rows and columns $p$ and $q$ are the same as  those of $A_k$, and  the symmetric matrix $H_k\in\Rnn$ has zero  entries except that  $(H_k)_{pj}=\hat{a}_{pj}^{(k)}$ and $(H_k)_{qj}=\hat{a}_{qj}^{(k)}$ for all $j\neq p,q$ and $(H_k)_{ip}=\hat{a}_{ip}^{(k)}$ and $(H_k)_{iq}=\hat{a}_{iq}^{(k)}$ for all $i\neq p,q$.
\end{lemma}
\begin{proof}
Let $J_k^T A_k J_k=(\tilde{a}_{ij}^{(k+1)})$. Then we have
\begin{eqnarray*}\label{eq:hatapp}
&& \hat{a}_{pp}^{(k+1)} = \fl\left(
\begin{bmatrix}\hat{c}_k \\ - \hat{s}_k \end{bmatrix}^T \begin{bmatrix}\hat{a}_{pp}^{(k)} &  \hat{a}_{pq}^{(k)} \\ \hat{a}_{pq}^{(k)} & \hat{a}_{qq}^{(k)} \end{bmatrix}\begin{bmatrix} \hat{c}_k \\ -\hat{s}_k \end{bmatrix}\right) \\
&=& \fl(\hat{c}_k(\hat{c}_k\hat{a}_{pp}^{(k)}-\hat{s}_k\hat{a}_{pq}^{(k)})-\hat{s}_k(\hat{c}_k\hat{a}_{pq}^{(k)} -\hat{s}_k\hat{a}_{qq}^{(k)})) \\
&=&\fl(\hat{c}_k(\hat{c}_k\hat{a}_{pp}^{(k)}-\hat{s}_k\hat{a}_{pq}^{(k)}))(1+\theta_1) - \fl(\hat{s}_k(\hat{c}_k\hat{a}_{pq}^{(k)} -\hat{s}_k\hat{a}_{qq}^{(k)}))(1+\theta_1)\nonumber\\
&=& \hat{c}_k\cdot \fl(\hat{c}_k\hat{a}_{pp}^{(k)}-\hat{s}_k\hat{a}_{pq}^{(k)})(1+\theta_2) - \hat{s}_k\cdot \fl(\hat{c}_k\hat{a}_{pq}^{(k)} -\hat{s}_k\hat{a}_{qq}^{(k)})(1+\theta_2')\nonumber\\
&=& \hat{c}_k (\hat{c}_k\hat{a}_{pp}^{(k)}(1+\theta_2'')-\hat{s}_k\hat{a}_{pq}^{(k)}(1+\theta_2'''))(1+\theta_2) \nonumber\\
&& - \hat{s}_k(\hat{c}_k\hat{a}_{pq}^{(k)}(1+\theta_2'''')-\hat{s}_k\hat{a}_{qq}^{(k)}(1+\theta_2'''''))(1+\theta_2') \nonumber\\
&=& \hat{c}_k^2 \hat{a}_{pp}^{(k)}(1+\theta_4)-\hat{s}_k\hat{c}_k\hat{a}_{pq}^{(k)}(1+\theta_4')
- \hat{s}_k\hat{c}_k\hat{a}_{pq}^{(k)}(1+\theta_4'') + \hat{s}_k^2\hat{a}_{qq}^{(k)}(1+\theta_4''') \nonumber\\
&=& c_k^2\hat{a}_{pp}^{(k)}(1+\tilde{\theta}_4)-s_kc_k\hat{a}_{pq}^{(k)}(1+\tilde{\theta}_4') - s_kc_k\hat{a}_{pq}^{(k)}(1+\tilde{\theta}_4'') + s_k^2\hat{a}_{qq}^{(k)}(1+\tilde{\theta}_4''') \nonumber\\
&=& c_k^2\hat{a}_{pp}^{(k)} - 2 s_k c_k \hat{a}_{pq}^{(k)} + s_k^2 \hat{a}_{qq}^{(k)} + \big( c_k^2\hat{a}_{pp}^{(k)}\tilde{\theta}_4-s_kc_k\hat{a}_{pq}^{(k)}\tilde{\theta}_4'- s_kc_k\hat{a}_{pq}^{(k)}\tilde{\theta}_4'' + s_k^2\hat{a}_{qq}^{(k)}\tilde{\theta}_4'''\big) \nonumber \\
&=& \tilde{a}_{pp}^{(k+1)} + \big( c_k^2\hat{a}_{pp}^{(k)}\tilde{\theta}_4-s_kc_k\hat{a}_{pq}^{(k)}\tilde{\theta}_4'- s_kc_k\hat{a}_{pq}^{(k)}\tilde{\theta}_4'' + s_k^2\hat{a}_{qq}^{(k)}\tilde{\theta}_4'''\big).
\end{eqnarray*}
Thus,
$
|\hat{a}_{pp}^{(k+1)} - \tilde{a}_{pp}^{(k+1)}|
 \le  (c_k^2|\hat{a}_{pp}^{(k)}|+s_k^2|\hat{a}_{qq}^{(k)}|+2|s_kc_k||\hat{a}_{pq}^{(k)}|)\tilde{\gamma}_4.
$

Analogously, we can  show that
$
|\hat{a}_{qq}^{(k+1)} - \tilde{a}_{qq}^{(k+1)}| \le (s_k^2|\hat{a}_{pp}^{(k)}|+c_k^2|\hat{a}_{qq}^{(k)}|+2|s_kc_k||\hat{a}_{pq}^{(k)}|)\tilde{\gamma}_4.
$

For any $j \ne p,q$, we have
\begin{eqnarray*}
\hat{a}_{pj}^{(k+1)} &=&  \fl\left(
\begin{bmatrix}\hat{c}_k \\ - \hat{s}_k \end{bmatrix}^T \begin{bmatrix}\hat{a}_{pj}^{(k)} \\ \hat{a}_{qj}^{(k)}\end{bmatrix}\right)
 =  \hat{c}_k\hat{a}_{pj}^{(k)} (1+\theta_2) - \hat{s}_k \hat{a}_{qj}^{(k)}(1+\theta_2')\\
&=& c_k\hat{a}_{pj}^{(k)} (1+\tilde{\theta}_2) -s_k \hat{a}_{qj}^{(k)}(1+\tilde{\theta}_2')
 =c_k\hat{a}_{pj}^{(k)} - s_k \hat{a}_{qj}^{(k)} + (c_k\hat{a}_{pj}^{(k)}\tilde{\theta}_2-s_k \hat{a}_{qj}^{(k)}\tilde{\theta}_2') \\
 &=&  \tilde{a}_{pj}^{(k+1)} + (c_k\hat{a}_{pj}^{(k)}\tilde{\theta}_2-s_k \hat{a}_{qj}^{(k)}\tilde{\theta}_2')
\end{eqnarray*}
and thus
$
|\hat{a}_{pj}^{(k+1)} - \tilde{a}_{pj}^{(k+1)}|
\le  (|c_k||\hat{a}_{pj}^{(k)}| + |s_k||\hat{a}_{qj}^{(k)}|)\tilde{\gamma}_2.
$

In a similar way, we have
$
|\hat{a}_{qj}^{(k+1)} - \tilde{a}_{qj}^{(k+1)}| \le (|s_k||\hat{a}_{pj}^{(k)}| + |c_k||\hat{a}_{qj}^{(k)}|)\tilde{\gamma}_2.
$
Using \eqref{eq:bound-rot}  and  the fact $\tilde{a}_{pq}^{(k+1)} = 0$, we obtain
$
|\hat{a}_{pq}^{(k+1)} - \tilde{a}_{pq}^{(k+1)}| \le (|\hat{a}_{pq}^{(k)}|+|s_kc_k|(|\hat{a}_{pp}^{(k)}|+|\hat{a}_{qq}^{(k)}|)) \tilde{\gamma}_4.
$

Therefore,
\begin{eqnarray*}
&&  \|Y_k\|_F^2 = \|A_{k+1}-J_k^T A_k J_k\|_F^2 \\
&&\quad =(\hat{a}_{pp}^{(k+1)} - \tilde{a}_{pp}^{(k+1)})^2+(\hat{a}_{qq}^{(k+1)} - \tilde{a}_{qq}^{(k+1)})^2 +2(\hat{a}_{pq}^{(k+1)} - \tilde{a}_{pq}^{(k+1)})^2 \\
&&\qquad + 2\sum_{j\neq p,q}\big((\hat{a}_{pj}^{(k+1)} -
\tilde{a}_{pj}^{(k+1)})^2+(\hat{a}_{qj}^{(k+1)} - \tilde{a}_{qj}^{(k+1)})^2\big) \\
&&\quad \le  (c_k^2 |\hat{a}_{pp}^{(k)}|+ s_k^2 |\hat{a}_{qq}^{(k)}|+2|s_kc_k||\hat{a}_{pq}^{(k)}|)^2\tilde{\gamma}_4^2 + (s_k^2 |\hat{a}_{pp}^{(k)}|+ c_k^2 |\hat{a}_{qq}^{(k)}|+2|s_kc_k||\hat{a}_{pq}^{(k)}|)^2 \tilde{\gamma}_4^2 \nonumber \\
& &\qquad + 2(|\hat{a}_{pq}^{(k)}|+|s_kc_k|(|\hat{a}_{pp}^{(k)}|+|\hat{a}_{qq}^{(k)}|))^2 \tilde{\gamma}_4^2\\
& &\qquad + 2\sum_{j\ne p,q}\big( (|c_k||\hat{a}_{pj}^{(k)}| + |s_k||\hat{a}_{qj}^{(k)}|)^2 +(|s_k||\hat{a}_{pj}^{(k)}| + |c_k||\hat{a}_{qj}^{(k)}|)^2\big) \tilde{\gamma}_4^2 \\
&&\quad \le  \big((1+\sqrt{2}|s_kc_k|)|\hat{a}_{pp}^{(k)}| + (1+\sqrt{2}|s_kc_k|)|\hat{a}_{qq}^{(k)}| + (\sqrt{2} +4|s_kc_k|)|\hat{a}_{pq}^{(k)}| \big)^2 \tilde{\gamma}_4^2 \\
&&\qquad + 4 \sum_{j\ne p,q} (|\hat{a}_{pj}^{(k)}|^2 + |\hat{a}_{qj}^{(k)}|^2) \tilde{\gamma}_4^2 \nonumber \\
&&\quad \le  \big(2(1+\sqrt{2}|s_kc_k|)^2+(1+2\sqrt{2}|s_kc_k|)^2\big) \big((\hat{a}_{pp}^{(k)})^2+(\hat{a}_{qq}^{(k)})^2 +2(\hat{a}_{pq}^{(k)})^2\big)\tilde{\gamma}_4^2 \\
&&\qquad + 4 \sum_{j\ne p,q} (|\hat{a}_{pj}^{(k)}|^2 + |\hat{a}_{qj}^{(k)}|^2) \tilde{\gamma}_4^2 \nonumber \\
&&\quad \le  2(1+\sqrt{2})^2 \big((\hat{a}_{pp}^{(k)})^2+(\hat{a}_{qq}^{(k)})^2+2(\hat{a}_{pq}^{(k)})^2\big)\tilde{\gamma}_4^2 + 4 \sum_{j\ne p,q} (|\hat{a}_{pj}^{(k)}|^2  + |\hat{a}_{qj}^{(k)}|^2) \tilde{\gamma}_4^2 \nonumber \\
&&\quad \equiv 2(3+2\sqrt{2})\|G_k\|_F^2\tilde{\gamma}_4^2 + 2\|H_k\|_F^2 \tilde{\gamma}_4^2,
\end{eqnarray*}
where the third inequality uses the fact that $a_1^2+a_2^2+a_3^2\le(a_1+a_2+a_3)^2$ for all $a_1,a_2,a_3\ge 0$ and the Cauchy-Schwarz inequality, the fourth inequality uses the
Cauchy-Schwarz inequality, and the fifth inequality uses the fact that $|s_kc_k|\le 1/2$.
The lemma follows by taking the square root of the above inequality.
\end{proof}


On the off-diagonal entries  of the updated matrix after one step of Jacobi's method, we have the following result.
\begin{lemma}\label{lem:saij}
If one step of Jacobi's method is performed in the $(p,q)$ plane on the matrix $A_k = (\hat{a}_{ij}^{(k)})$ with the computed Jacobi rotation $\hat{J}_k = J(p,q;\hat{c}_k,\hat{s}_k)$, then the computed $A_{k+1}$ satisfies
\begin{eqnarray}\label{diff:offam1m}
 \|\off(A_{k+1})\|_F^2 -\|\off(A_k)\|_F^2
\le   - 2|\hat{a}_{pq}^{(k)}|^2 + 2\|H_k\|_F^2\tilde{\gamma}_4 + 2\|G_k\|_F^2\tilde{\gamma}_4^2,
\end{eqnarray}
where $G_k$ and $H_k$ are defined as in Lemma {\rm \ref{lem:pqrk-err}}.
Moreover, for arbitrarily  chosen index pairs $\{(p_k,q_k)\}_{k=0}^{N-1}$, we have
\BE \label{ieq:sumz}
2\sum_{k=0}^{N-1} |\hat{a}_{p_k q_k}^{(k)}|^2 \le \|\off(A_0)\|_F^2 + 2\sum_{k=0}^{N-1}\|H_k\|_F^2\tilde{\gamma}_4 + 2\sum_{k=0}^{N-1}\|G_k\|_F^2\tilde{\gamma}_4^2.
\EE
\end{lemma}

\begin{proof}
It directly follows from Lemma \ref{lem:pqrk} that, for the chosen index pair $(p,q)$,
\begin{eqnarray*}
& & \|\off(A_{k+1})\|_F^2 - \|\off(A_k)\|_F^2 =  \sum_{i\ne j} \big(|\hat{a}_{ij}^{(k+1)}|^2 - |\hat{a}_{ij}^{(k)}|^2\big) \\
&=&   2\big(|\hat{a}_{pq}^{(k+1)}|^2-|\hat{a}_{pq}^{(k)}|^2\big) + 2\sum_{j\ne p,q} \big(|\hat{a}_{pj}^{(k+1)}|^2 + |\hat{a}_{qj}^{(k+1)}|^2 -|\hat{a}_{pj}^{(k)}|^2-|\hat{a}_{qj}^{(k)}|^2\big) \\
& \le & 2\big(|\hat{a}_{pp}^{(k)}|^2+|\hat{a}_{qq}^{(k)}|^2+2|\hat{a}_{pq}^{(k)}|^2\big) \tilde{\gamma}_4^2 - 2|\hat{a}_{pq}^{(k)}|^2 + 4\sum_{j\ne p,q} \big(|\hat{a}_{pj}^{(k)}|^2 + |\hat{a}_{qj}^{(k)}|^2\big)\tilde{\gamma}_4 \\
& = & 2\|G_k\|_F^2\tilde{\gamma}_4^2   - 2|\hat{a}_{pq}^{(k)}|^2 + 2\|H_k\|_F^2\tilde{\gamma}_4,
\end{eqnarray*}
which is \eqref{diff:offam1m}.
For  arbitrarily chosen index pairs $\{(p_k,q_k)\}_{k=0}^{N-1}$, we have  by \eqref{diff:offam1m},
\begin{eqnarray*}
& & \|\off(A_{N})\|_F^2 - \|\off(A_0)\|_F^2
 \le  2\sum_{k=0}^{N-1} \|G_k\|_F^2\tilde{\gamma}_4^2   - 2\sum_{k=0}^{N-1} |\hat{a}_{p_kq_k}^{(k)}|^2 + 2\sum_{k=0}^{N-1} \|H_k\|_F^2\tilde{\gamma}_4,
\end{eqnarray*}
which yields  \eqref{ieq:sumz}.
\end{proof}

\begin{remark}\label{rem:dc}
We observe from Lemma {\rm\ref{lem:saij}} that $A_k=(\hat{a}_{ij}^{(k)})$ moves closer to diagonal form with each Jacobi step if  $\tilde{\gamma}_4$ is such that
$
\|H_k\|_F^2\tilde{\gamma}_4 + \|G_k\|_F^2 \tilde{\gamma}_4^2 < |\hat{a}_{pq}^{(k)}|^2.
$
If $|\hat{a}_{pq}^{(k)}|\le u\min\{|\hat{a}_{pp}^{(k)}|, |\hat{a}_{qq}^{(k)}|\}$ or $|\hat{a}_{pq}^{(k)}|\le u(|\hat{a}_{pp}^{(k)}\hat{a}_{qq}^{(k)}|)^{1/2}$, then one may set $\hat{a}_{pq}^{(k)} =\hat{a}_{qp}^{(k)} = 0$ {\rm\cite{Demmel.Veselic-1992, Davies.Higham.Tisseur-2001}}.
\end{remark}

The following theorem states the error bound for one step of Jacobi's method.
\begin{theorem}\label{cov:cjm}
Let $A_k$ be the matrix $A_0=A$ after $k$ Jacobi updates with the computed Jacobi rotations $\{\hat{J}_j = J(p_j,q_j;\hat{c}_j,\hat{s}_j)\}_{j=0}^{k-1}$ in Algorithm {\rm \ref{alg:Jacobi}}.  If $2N\tilde{\gamma}_4 <1$, then we have, for any $k\ge 0$,
\BE\label{diff:offam1m-classical}
\|\off(A_{k+1})\|_F^2 \le c\|\off(A_k)\|_F^2 + 2\|G_k\|_F^2\tilde{\gamma}_4^2.
\EE
where $c = 1 - 1/N + 2\tilde{\gamma}_4 \in (0,1)$.
Moreover,  for any $k\ge 1$, there exists some ordering $\{\la_{\pi^{(0)}(i)}(A_0)\}_{i=1}^n$ of $\{\la_i(A_0)\}_{i=1}^n$ such that
\BE\label{ineq:akdiags}
\Big(\sum_{i=1}^n \big(\hat{a}_{ii}^{(k)} - \la_{\pi^{(0)}(i)}(A_0)\big)^2\Big)^{1/2} \le \|\off(A_k)\|_F +  \tilde{\gamma}_4\sum_{j=0}^{k-1}\varphi_j,
\EE
where $ \varphi_j= ((6+4\sqrt{2})\|G_j\|_F^2 + 2\|H_j\|_F^2)^{1/2}$ with $G_j$ and $H_j$ being defined as in Lemma {\rm \ref{lem:pqrk-err}}  for $j=0,1,\ldots,k-1$.
\end{theorem}
\begin{proof}
From the procedure  of Algorithm \ref{alg:Jacobi}, it follows that, in the $k$th iteration,  the index pair $(p_k,q_k)$ is chosen such that $|\hat{a}^{(k)}_{p_kq_k}|=\max_{i\neq j}|\hat{a}^{(k)}_{ij}|$. Then,
by \eqref{diff:offam1m} and using the definition of $G_k$ and the fact that $\|H_k\|_F \le \|\off(A_k)\|_F$ and $\|\off(A_k)\|_F^2\le 2N |a_{p_kq_k}|^2$, we obtain \eqref{diff:offam1m-classical}, where $c = 1 - 1/N + 2\tilde{\gamma}_4 \in (0,1)$ due to $2N\tilde{\gamma}_4 <1$.

Let  $J_j = J(p_j,q_j;c_j,s_j)$ be the exact  Jacobi rotation corresponding to $\hat{J}_j$ for $j=0,1,\ldots,k-1$.
Note that $A_j$ and $J_j^T A_j J_j$  have the same eigenvalues  for $j=0,1,\ldots,k-1$. Then, for any $k\ge 1$, there exist  some ordering $\{\la_{\pi^{(j)}(i)}(A_j)\}_{i=1}^n$ of $\{\la_i(A_j)\}_{i=1}^n$ with
$D(A_j;\pi^{(j)})=\diag(\lambda_{\pi^{(j)}(1)}(A_j),\ldots, \lambda_{\pi^{(j)}(n)}(A_j) )$ for $j=0,1,\ldots,k-1$ such that
\begin{eqnarray*}
& & \Big(\sum_{i=1}^n (\hat{a}_{ii}^{(k)} - \lambda_{\pi^{(0)}(i)}(A_0))^2 \Big)^{1/2} = \| A_k - \off(A_k)-D(A_0;\pi^{(0)})\|_F\\
& \le & \|A_k - \off(A_k) - D(J_{k-1}^T A_{k-1} J_{k-1};\pi^{(k-1)})\|_F + \|D(A_{k-1};\pi^{(k-1)}) - D(A_0;\pi^{(0)})\|_F \\
& \le & \|A_k - \off(A_k) - J_{k-1}^T A_{k-1} J_{k-1}\|_F + \|D(A_{k-1};\pi^{(k-1)}) - D(A_0;\pi^{(0)})\|_F
\\
& \le & \|\off(A_k)\|_F + \|Y_{k-1}\|_F + \|D(A_{k-1};\pi^{(k-1)}) - D(A_0;\pi^{(0)})\|_F \\
& \le & \|\off(A_k)\|_F + \|Y_{k-1}\|_F + \|D(A_{k-1};\pi^{(k-1)}) - D(J_{k-2}^T A_{k-2} J_{k-2};\pi^{(k-2)})\|_F \\
&& + \|D(A_{k-2};\pi^{(k-2)} ) - D(A_0;;\pi^{(0)})\|_F \\
& \le & \|\off(A_k)\|_F + \|Y_{k-1}\|_F + \|Y_{k-2}\|_F + \|D( A_{k-2};\pi^{(k-2)} ) - D(A_0;\pi^{0j)})\|_F \\
& \le & \cdots \le \|\off(A_k)\|_F + \sum_{j=0}^{k-1}\|Y_j\|_F \le \|\off(A_k)\|_F +  \tilde{\gamma}_4\sum_{j=0}^{k-1}\varphi_j,
\end{eqnarray*}
where  the second, third, fifth  inequalities and so on use  Lemma \ref{lea:perturbation-eig} and/or $Y_j= A_{j+1} -J_j^T A_j J_j$ for $j=k-1,k-2,\ldots,0$ as defined in Lemma \ref{lem:pqrk-err}, and the last  inequality uses Lemma \ref{lem:pqrk-err}.
\end{proof}

\begin{remark}
The error bound \eqref{ineq:akdiags} also holds for $A_k$ being  the matrix $A_0=A$ after $k$ Jacobi updates by Algorithm {\rm \ref{alg:rcJacobi}}.
\end{remark}
\begin{remark}
We observe from Theorem {\rm \ref{cov:cjm}} that if $ \tilde{\gamma}_4=0$, then \eqref{diff:offam1m-classical} is reduced to $
\|\off(A_{k+1})\|_F^2 \le (1-1/N)\|\off(A_k)\|_F^2,
$
which implies the  linear rate of the classical Jacobi method by induction  {\rm (}see {\rm \cite[p.479]{Golub.VanLoan-2013}}{\rm )}.  Furthermore, it follows from \eqref{diff:offam1m-classical} that
\[
\begin{array}{lcl}
&& \|\off(A_{k+1})\|_F \le \sqrt{c} \|\off(A_k)\|_F   + \sqrt{2} \|G_k\|_F\tilde{\gamma}_4  \\
& \le&  \sqrt{c}\|\off(A_k)\|_F + \sqrt{2}(\|A_0\| + \sum_{j=0}^{k-1}\|Y_j\|) \tilde{\gamma}_4  \\
&\le& \sqrt{c}\|\off(A_k)\|_F + \sqrt{2} (\|A_0\| +\sum_{j=0}^{k-1} \varphi_j\tilde{\gamma}_4) \tilde{\gamma}_4,
\end{array}
\]
by using  the fact that  $a_1^2+a_2^2\le(a_1+a_2)^2$ for all $a_1,a_2\ge 0$ and $\|G_k\|_F=((a_{p_kp_k}^{(k)})^2+(a_{q_kq_k}^{(k)})^2+2(a_{p_kq_k}^{(k)})^2)^{1/2}\le \|A_k\|_F \le \|A_{k-1}\|_F+\|Y_{k-1}\|_F \le \cdots \le \|A_0\|_F + \sum_{j=0}^{k-1}\|Y_j\|_F$, where $Y_k$'s are defined as in Lemma {\rm\ref{lem:pqrk-err}}. In addition,  if Algorithm {\rm\ref{alg:Jacobi}}  terminates at the $k_0$th iteration with the stopping criterion being satisfied, then \eqref{ineq:akdiags} becomes
$
(\sum_{i=1}^n \big(\hat{a}_{ii}^{(k)} - \la_{\pi^{(0)}(i)}(A_0)\big)^2 )^{1/2} \le \epsilon \|A_0\|_F + \tilde{\gamma}_4 \sum_{j=0}^{k_0-1} \varphi_j.
$
For the similar  error analysis on cyclic Jacobi's method, one may refer to \cite[p.279]{Wilkinson-1965} {\rm(}see also \cite[pp.480--481]{Golub.VanLoan-2013}{\rm)}.
\end{remark}

\subsection{Error analysis for one sweep of the cyclic Jacobi method in floating point arithmetic}
In this subsection, we consider the error analysis for one sweep of the cyclic Jacobi method  in floating point arithmetic.
\subsubsection{The general cyclic order with distinct eigenvalues} \label{sec321}
We consider the error analysis for one sweep of the general cyclic Jacobi method in floating point arithmetic for a symmetric matrix $A\in\Rnn$ with $n$ distinct eigenvalues.

We first discuss the relationship between  the Frobenius matrix norm of the off-diagonal entries of $A_k$  and the  minimal gap between the eigenvalues of  $A_k$ in the general cyclic Jacobi method.

\begin{lemma} \label{lea:Jacobi-dist}
If one step of Jacobi's method is performed in the $(p,q)$ plane on the matrix $A_k = (\hat{a}_{ij}^{(k)})$ with the computed Jacobi rotation $\hat{J}_k = J(p,q;\hat{c}_k,\hat{s}_k)$ and $A_k$ has $n$  distinct eigenvalues with
$
d(A_k) := \min_{i\neq j} | \la_i(A_k) - \la_j(A_k) |>0,
$
then the computed  $A_{k+1}$ also has $n$ distinct eigenvalues with
$
d(A_{k+1}) \ge  d(A_k) - 2\varphi_k \tilde{\gamma}_4>0,
$
provided that $2\varphi_k \tilde{\gamma}_4<d_k$, where $\varphi_k = ((6+4\sqrt{2})\|G_k\|_F^2 + 2\|H_k\|_F^2)^{1/2}$ with $G_k$ and $H_k$ being defined as in Lemma {\rm \ref{lem:pqrk-err}}.
\end{lemma}
\begin{proof}
Let $J_k = J(p,q;c_k,s_k)$ be the exact Jacobi rotation. Then, by Lemma \ref{lea:perturbation-eig} we have, for any $i \ne j$,
\begin{eqnarray*}
& & |\la_i(A_{k+1}) - \la_j(A_{k+1}) | \\
&=& | \la_i(A_{k+1}) - \la_i(A_k) + \la_i(A_{k}) - \la_j(A_{k}) + \la_j(A_k)- \la_j(A_{k+1}) | \\
		& \ge & | \la_i(A_k) - \la_j(A_k) | - |\la_i(A_{k+1}) - \la_i(A_k)| - |\la_j(A_{k+1}) - \la_j(A_k)| \\
		& = & | \la_i(A_k) - \la_j(A_k) | - |\la_i(A_{k+1}) - \la_i(J_k^T A_k J_k)| - |\la_j(A_{k+1}) - \la_j(J_k^T A_k J_k)| \\
		& \ge & d(A_k) - 2 \|A_{k+1}-J_k^T A_k J_k\|,
\end{eqnarray*}
completing the proof of the lemma by invoking Lemma \ref{lem:pqrk-err}.
\end{proof}
\begin{lemma} \label{lem:cond-ineq}
Suppose one step of Jacobi's method is performed in the $(p,q)$ plane on the matrix $A_k = (\hat{a}_{ij}^{(k)})$ with the computed Jacobi rotation $\hat{J}_k = J(p,q;\hat{c}_k,\hat{s}_k)$. Let $ \varphi_k= ((6+4\sqrt{2})\|G_k\|_F^2 + 2\|H_k\|_F^2)^{1/2}$ with $G_k$ and $H_k$ being defined as in Lemma {\rm \ref{lem:pqrk-err}}. If $A_k$ has $n$ distinct eigenvalues with $4\|\off(A_k)\|_F <d(A_k)$ and
\BE \label{eq:sdescent}
\frac{1}{8}\varphi_k^2 \tilde{\gamma}_4^2 + \frac{1}{2}\|\off(A_k)\|_F \varphi_k\tilde{\gamma}_4 + \|H_k\|_F^2\tilde{\gamma}_4 + \|G_k\|_F^2\tilde{\gamma}_4^2 < |\hat{a}_{pq}^{(k)}|^2,
\EE
then we have
$
4 \|\off(A_{k+1})\|_F < d(A_{k+1}).
$
\end{lemma}
\begin{proof}
Let $b_k = \|H_k\|_F^2\tilde{\gamma}_4 + \|G_k\|_F^2\tilde{\gamma}_4^2$.
From  \eqref{eq:sdescent} we have $b_k < |\hat{a}_{pq}^{(k)}|^2$. By Lemma {\rm\ref{lem:saij}}, we have  $\|\off(A_{k+1})\|_F<\|\off(A_k)\|_F$ and 
$
\|\off(A_{k+1})\|_F^2 + 2(\hat{a}_{pq}^{(k)})^2 - 2b_k>0.
$
This, together with  \eqref{eq:sdescent} again, yields
\begin{eqnarray}\label{eq:Qeq-r}
&&(\|\off(A_{k+1})\|_F  + \frac{1}{2}\varphi_k\tilde{\gamma}_4 )^2 -(\|\off(A_{k+1})\|_F^2 + 2|\hat{a}_{pq}^{(k)}|^2 - 2b_k) \nonumber\\
&\le& \frac{1}{4}\varphi_k^2 \tilde{\gamma}_4^2  + \|\off(A_{k+1})\|_F \varphi_k\tilde{\gamma}_4  + 2b_k - 2|\hat{a}_{pq}^{(k)}|^2  \nonumber\\
&\le& \frac{1}{4}\varphi_k^2 \tilde{\gamma}_4^2  + \|\off(A_{k})\|_F \varphi_k\tilde{\gamma}_4  + 2b_k - 2|\hat{a}_{pq}^{(k)}|^2<0.
\end{eqnarray}

By hypothesis, we have $4\|\off(A_k)\|_F < d(A_k)$. 
Then, by following the arguments similar to the proof of  Lemma \ref{lea:Jacobi-dist} we have
\begin{eqnarray*}
d(A_{k+1}) & \ge & d(A_{k}) - 2\varphi_k \tilde{\gamma}_4  > 4 \|\off(A_k)\|_F -  2\varphi_k \tilde{\gamma}_4.  \\
& \ge & 4\big(\|\off(A_{k+1})\|_F^2 + 2(\hat{a}_{pq}^{(k)})^2 - 2b_k  \big)^{1/2} -  2\varphi_k\tilde{\gamma}_4  
>  4 \|\off(A_{k+1})\|_F,
\end{eqnarray*}
where the third inequality uses  \eqref{diff:offam1m} and the  last inequality uses \eqref{eq:Qeq-r}.
\end{proof}

Next, we  recall the relationship between the gap of  diagonal entries of a real symmetric matrix $A$  and the minimal gap between the eigenvalues of $A$ and  the relationship between the rotation angle and the off-diagonal Frobenius norm \cite{Wilkinson-1962}.
\begin{lemma} \label{lea:diagdist}
Let $G = (g_{ij})$ be an $n\times n$ real  symmetric matrix with $n$ distinct eigenvalues. If
$
d(G) = \min_{i\ne j} | \la_i(G) - \la_j(G) |>0
$
and $\|\off(G)\|_F\le d(G)/4$, then we have, for some ordering of $\{\la_i(G)\}_{i=1}^n$,
$|g_{ii} -g_{jj}| \ge d(G)/2$ for all $i\ne j$, and for $(p,q)$ plane on $G$, the angle  $\theta_{pq}$ of the exact Jacobi rotation  satisfies
$
|\sin \theta_{pq}| \le 2| g_{pq}|/d(G).
$
\end{lemma}

In the following, we study the error analysis for one sweep of the general cyclic Jacobi method in floating
point arithmetic. Suppose that  $A\in\Rnn$ is symmetric with $n$ distinct eigenvalues. In the general cyclic Jacobi method, all off-diagonal entries are annihilated (in the sense of floating point arithmetic) successively in some order. For convenience, we assume that $\|\off(A_0) \|_F<d(A_0)/4$. We use $(p_k, q_k)$ as the chosen annihilation position at the $k$th iteration of the whole general cyclic Jacobi method. During a fixed cycle of $N$ consecutive rotations of a general cyclic ordering starting from $k=0$, the entries before annihilation are denoted by $\{z_k\}_{k=0}^{N-1}$ and the computed and exact Jacobi rotations by $\{\hat{J}(p_k,q_k;\hat{c}_k,\hat{s}_k)\}_{k=0}^{N-1}$ and $\{J(p_k,q_k;c_k,s_k)\}_{k=0}^{N-1}$.

We now assume that, after the annihilation of the entry $z_k$ at the the $k$th iteration, its value will be affected only by a subset of the later Jacobi rotations with subscripts $k_{1}, k_{2}, \ldots, k_{r}$ with $r\le 2(n-2)$ being a function of $k$ \cite{Henrici-1958}. Let $\hat{z}_{k,j}$ be the computed value of $z_k$ after the  rotation $\hat{J}(p_{k_j},q_{k_j};\hat{c}_{k_j},\hat{s}_{k_j})$. Then we have by Lemma \ref{lem:pqrk},
\begin{eqnarray*}
&\hat{z}_{k,1} = \fl(\hat{z}_{k,0} \hat{c}_{k_1} \pm \hat{a}_{k_1} \hat{s}_{k_1} )
		    =  \hat{z}_{k,0}  \hat{c}_{k_1}(1+\delta_{k_1})(1+\delta_{k_1}') \pm \hat{a}_{k_1} \hat{s}_{k_1} (1 + \delta_{k_1}')(1+\delta_{k_1}''), &\\
&\hat{z}_{k,2} = \fl(\hat{z}_{i,1}\hat{c}_{k_2} \pm \hat{a}_{k_2} \hat{s}_{k_2}) =  \hat{z}_{i,1}\hat{c}_{k_2}(1+\delta_{k_2})(1+\delta_{k_2}')
\pm \hat{a}_{k_2} \hat{s}_{k_2} (1+\delta_{k_2}')(1+\delta_{k_2}''), &\\
&  \cdots \cdots&\\
&\hat{z}_{k,r} = \fl(\hat{z}_{i,r-1} \hat{c}_{k_r} \pm \hat{a}_{k_r} \hat{s}_{k_r})
			 =  \hat{z}_{i,r-1} \hat{c}_{k_r}(1+\delta_{k_r})(1+\delta_{k_r}')
\pm \hat{a}_{k_r} \hat{s}_{k_r}(1+\delta_{k_r}')(1+\delta_{k_r}''),&
\end{eqnarray*}
where  $\hat{a}_{k_j} $ stands for an entry of $A_{k_j}$ at either the $p_{k_j}$th or $q_{k_j}$th row or the $p_{k_j}$th or $q_{k_j}$th column except  the intersections of rows and columns $p_{k_j}$ and $q_{k_j}$.

By Lemma \ref{lea:JacobiAngle}, we have $|\hat{c}|\le 1+\tilde{\gamma}_1$ and $|\hat{s}|\le |s|(1+\tilde{\gamma}_1)$. Thus,
\BE \label{ieq:zir-base}
|\hat{z}_{k,j}| \le (|\hat{z}_{k,j-1}| + |\hat{a}_{k_j}| |s_{k_j}|)(1+\tilde{\gamma}_1) (1+\gamma_2) \le (|\hat{z}_{k,j-1}| + |\hat{a}_{k_j}| |s_{k_j}|)(1+\tilde{\gamma}_2),
\EE
for all $j=0,1,\ldots,r$. This implies that
\begin{eqnarray*}
|\hat{z}_{k,r}| & \le & (|\hat{z}_{k,r-1}| + |\hat{a}_{k_r}| |s_{k_r}|)(1+\tilde{\gamma}_2) \\
& \le & \big( (|\hat{z}_{k,r-2}| + |\hat{a}_{k_{r-1}}| |s_{k_{r-1}}|)(1+\tilde{\gamma}_2)+ |\hat{a}_{k_r}| |s_{k_r}|\big) (1+\tilde{\gamma}_2) \\
& \le & \cdots \\
& \le & |\hat{z}_{k,0}| (1+\tilde{\gamma}_2)^r + |\hat{a}_{k_1}| |s_{k_1}| (1+\tilde{\gamma}_2)^r + |\hat{a}_{k_2}| |s_{k_2}| (1+\tilde{\gamma}_2)^{r-1} + \cdots \\
				  & & + |\hat{a}_{k_{r-1}}| |s_{k_{r-1}}|(1+\tilde{\gamma}_2)^2 + |\hat{a}_{k_r}| |s_{k_r}| (1+\tilde{\gamma}_2) \\
&\le& (|\hat{z}_{k,0}| + |\hat{a}_{k_1}| |s_{k_1}|  + |\hat{a}_{k_2}| |s_{k_2}| + \cdots + |\hat{a}_{k_r}| |s_{k_r}|) (1+\tilde{\gamma}_2)^r.
\end{eqnarray*}

Without loss of generality, we assume that row and column effects are divided into two segments, $[1,r_1]$ and $[r_1+1,r_1+r_2]$ with $r = r_1+r_2$ (see Figure \ref{fig:diagram}), i.e.,
\begin{figure}[!h]
\centering
\includegraphics[width=0.35\textwidth,height=0.20\textheight]{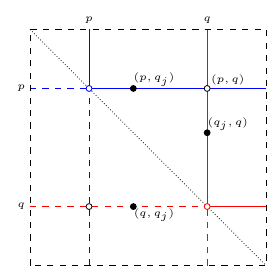}
\caption{The diagram of the row-effected indices (in blue) and the column-effected indices (in red).}
\label{fig:diagram}
\end{figure}
$

\right.
\]
provided that $\chi_1=(2c_1+ p(n)\upsilon)p(n) \upsilon <1$. Hence, $Z$ is nonsingular and $\rank(Z) = n$. In addition,
\BE \label{eq:kv}
\kappa(Z)=\frac{\sigma_{\max}(Z)}{\sigma_{\min}(Z)}
=\sqrt{\frac{\la_{\max}(Z^T Z)}{\la_{\min}(Z^T Z)}}
\le \sqrt{\frac{1 + \chi_1}{1 - \chi_1}} .
\EE
It follows from Lemma \ref{lea:MGS}  and \eqref{v:ub} that there exists an upper triangular matrix $R\in\Rnn$ such that
\BE \label{eq:qrfac}
Z = Q R + F_1, \quad \left\| F_1 \right\| \le \eta_1 \| Z\| \omega \le  \eta_1c_1\omega\equiv c_2\omega,
\EE
where $Q$ is such that
\BE \label{eq:qrorth}
Q^T Q = I + F_2, \quad \left\| F_2 \right\| \le \eta_2\kappa(Z) \omega \le \sqrt{\frac{1 + \chi_1}{1 - \chi_1}}\cdot \eta_2\omega\equiv c_3\omega.
\EE

We now give an upper bound and a lower bound to each diagonal entry of $R$.
From \eqref{eq:qrorth} we have
\BE \label{qnm:ub}
\| Q \| \le \sqrt{1+\| F_2 \|} \le 1 + \| F_2 \|.
\EE
Using  \eqref{eq:qrfac} and \eqref{eq:qrorth}  we have
$
Q^T Z = R + F_2 R + Q^T F_1,
$
which implies that
\[
\| R \|  \le \| Q\| \| Z \| + \| F_2 \| \| R \| + \| Q \| \| F_1 \| .
\]
Thus,
\begin{eqnarray} \label{rnm:ub}
\| R \| &\le & \frac{\| Q \|}{1 - \| F_2 \|} \left(\| Z \| + \| F_1 \| \right) \le  \frac{1 + \| F_2 \|}{1- \| F_2 \|}\left(\| Z \| + \| F_1 \| \right) \nonumber\\
& \le & \Big(1+ \frac{2\| F_2 \|}{1- \| F_2 \|} \Big)\left(\| Z \| + \| F_1 \| \right) \nonumber \\
			& \le & \Big(1+ \frac{2\chi_2}{1-\chi_2}\Big)(1 + \eta_1\omega)(1+p(n)\upsilon)
			\equiv 1 + g_1\upsilon,
\end{eqnarray}
provided that $\chi_2=c_3\omega<1$, where
\[
g_1= \eta_1\omega\upsilon^{-1} + (1+\eta_1\omega)p(n)+\frac{2c_3\omega\upsilon^{-1} + 2\eta_1\chi_2\omega\upsilon^{-1} + 2\chi_2p(n) + 2\eta_1\chi_2 p(n)\omega} {1-\chi_2}.
\]
Similarly, we have by \eqref{eq:qrfac},
\[
(Z+\delta Z) R^{-1} = Q + (F_1 + \delta Z) R^{-1}.
\]
This, together with the orthogonality of $Z+ \delta Z$, yields
\[
\| R^{-1} \| =  \| (Z+\delta Z) R^{-1} \|   \le  \| Q \| + \left(\| F_1 \| + \| \delta Z \| \right) \| R^{-1}\|.
\]
Using  \eqref{eq:qrfac}--\eqref{qnm:ub}  we have
\BE\label{rinv:ub}
\| R^{-1} \| \le \frac{\| Q \| }{1-(\| F_1  \| + \| \delta Z \|)  }  \le   \frac{1 + \| F_2 \|}{1-(\| F_1  \| + \| \delta Z \|) }
				   \le  \frac{1+c_3\omega}{1-\chi_3} \equiv 1 + g_2\upsilon,
\EE
provided that $\chi_3=c_2\omega +p(n)\upsilon<1$,
where $g_2 = ((c_2+c_3)\omega\upsilon^{-1} + p(n))/(1-\chi_3)$.

If $g_2\upsilon<1$, then
\BE \label{eq:boundr1}
1-g_2\upsilon \le \frac{1}{1+g_2\upsilon} \le \frac{1}{\| R^{-1}  \|}\le r_{ss} \le \|R\| \le 1+g_1\upsilon, \; s=1,2,\ldots,n.
\EE

Next, we bound the spectral norm of the strictly upper triangular part of $R$. Substituting \eqref{eq:qrfac} and \eqref{eq:qrorth} into $(Z+\delta Z)^T(Z+\delta Z) = I$ yields
\[
I - R^T R = R^T F_2 R + (F_1+\delta Z)^T Q R + R^T Q^T (F_1 + \delta Z) + (F_1+\delta Z)^T (F_1 + \delta Z).
\]
Then, by \eqref{eq:qrfac}, \eqref{eq:qrorth},  \eqref{qnm:ub}, and  \eqref{rnm:ub}   we have
\begin{eqnarray*}
&&\| I- R^T R \|  \le  \| R \|^2 \| F_2 \| + 2 \left(\| F_1 \| + \| \delta Z\| \right) \| Q \| \| R \| + \left(\| F_1 \| + \| \delta Z\| \right)^2 \\
& \le & (1+ g_1\upsilon)^2  c_3\omega + 2 ( c_2\omega +p(n)\upsilon)\sqrt{1+\chi_2}(1+g_1v) + ( c_2\omega +p(n)\upsilon)^2
\equiv  c_4\upsilon,
\end{eqnarray*}
where
$
c_4=c_3(1+ g_1\upsilon)^2\omega\upsilon^{-1} + 2 ( c_2\omega\upsilon^{-1} +p(n))\sqrt{1+\chi_2}(1+g_1v) + \chi_3( c_2\omega\upsilon^{-1} +p(n)).
$
This, together with \eqref{rinv:ub}, yileds
\[
\| R^{-1} - R^T \| \le \| I - R^T R \| \| R^{-1}\| \le  c_4(1+g_2\upsilon)\upsilon\equiv c_5\upsilon,
\]
which implies that
\BE \label{eq:boundr2}
|\hat{r}_{st}| \le \| R^{-1} - R^T \| \le c_5\upsilon\quad \forall s<t.
\EE
It follows from  \eqref{eq:boundr1} and \eqref{eq:boundr2} that
\[
\| R - I \|_F  =  \sqrt{\sum_{i=1}^n(\hat{r}_{ii}-1)^2+\sum_{i<j}\hat{r}_{ij}^2}
\le \upsilon \sqrt{n(\max\{g_1,g_2\})^2+\frac{n(n-1)}{2}c_5^2} \equiv  c_6\upsilon.
\]
Therefore, by \eqref{eq:qrfac},  \eqref{eq:qrorth}, and \eqref{qnm:ub}  we have
\begin{eqnarray*}
\| Z - Q \|_F & \le &  \| Q \| \| R - I \|_F + \| F_1 \|_F
				  \le  \sqrt{1+\| F_2 \|} \| R - I \|_F + \sqrt{n}\| F_1 \| \\
				  & \le & \sqrt{1+c_3\omega}\cdot c_6 \upsilon + \sqrt{n}\cdot c_2\omega
				  \equiv h_1 \upsilon + h_2\omega.
\end{eqnarray*}
\end{proof}



On the Frobenius norm of the off-diagonal entries of  the matrix $Q^T A Q$ generated in Step 3 of Algorithm {\rm\ref{alg:evd-main}}, we have the following result.
\begin{theorem} \label{thm:evd-ubd}
Suppose that  $Z \in \Rnn$ in  Algorithm {\rm\ref{alg:evd-main}} is computed by any eigensolver in LAPACK or EISPACK  in precision  $\upsilon$  and $Q\in\Rnn$  in  Algorithm {\rm\ref{alg:evd-main}}  is computed by using the MGS method to $Z$ in precision $\omega$ $(\omega \ll \upsilon)$. Then there exists a constant $\tilde{\psi}_1\equiv \tilde{\psi}_1(n)$ such that
$
\| \off(Q^T A Q)  \|_F  \le \tilde{\psi}_1 \| A \| \upsilon.
$
\end{theorem}
\begin{proof}
By Lemma \ref{lea:symeig} we know there exists a symmetric matrix $E\in\Rnn$ such that $A+E$ admits the following exact eigenvalue decomposition:
\BE \label{eq:eigfac}
A + E = (Z+\delta Z) D (Z+\delta Z)^T,\quad D=\diag(d_1,\ldots,d_n)\in\Rnn,
\EE
where $\|E\| \le p(n) \| A \| \upsilon$ and $Z+\delta Z$ is orthogonal with $\| \delta Z \| \le p(n)\upsilon$.
We note that $Q$ is computed by using the MGS method to $Z$ with precision $\omega$. It follows that
\begin{eqnarray} \label{eq:qaq-d}
Q^T A Q - D   &=& (Q - Z + Z)^T A (Q - Z + Z) - D \nonumber \\
				&=& (Q - Z )^T A (Q - Z ) + (Q - Z )^T A Z + Z^T A (Q - Z ) + (Z^T A Z - D).
\end{eqnarray}

On the other hand, using \eqref{eq:eigfac} we have
$
D=(Z+\delta Z)^T(A + E)(Z+\delta Z),
$
i.e.,
\[
D-Z^T A Z=Z^TA\delta Z+\delta Z^TAZ+\delta Z^TA\delta Z+(Z+\delta Z)^TE(Z+\delta Z).
\]
Thus,
\begin{eqnarray*}
\| Z^T A Z - D \|_F &\le &  2\| A \|_F \| Z \| \| \delta Z \| +  \| A \|_F \| \delta Z \|^2 + \| E \|_F\\
&\le & \| A \|_F (2 \| Z \|+ \| \delta Z \| )  \| \delta Z \| + \| E \|_F.
\end{eqnarray*}

By Lemma \ref{lea:main} and using \eqref{v:ub} and \eqref{eq:qaq-d}, we have
\begin{eqnarray}\label{t0:ubv}
&&  \| \off(Q^T A Q)  \|_F  \le  \| Q^T A Q - D \|_F \nonumber \\
&& \quad \le \| (Q - Z )^T A (Q - Z )\|_F  + 2\| (Q - Z )^T A Z \|_F + \| Z^T A Z - D \|_F   \nonumber \\
&& \quad \le  \| A \| \| Q - Z \|_F^2 + 2 \| A \| \| Z \| \| Q - Z \|_F + \| A \|_F( 2 \| Z \| +  \| \delta Z \|)\| \delta Z \|  +  \| E \|_F  \nonumber \\
&& \quad \le  \| A \| \| Q - Z \|_F(\| Q - Z \|_F + 2\| Z \|)  + \sqrt{n}\| A \| (2\| Z \| +  \| \delta Z \|) \| \delta Z \|  + \sqrt{n}\| E \|  \nonumber \\
&& \quad \le (2c_1+h_1 \upsilon + h_2\omega) \| A \| (h_1 \upsilon + h_2\omega)
+ \sqrt{n} \| A \| (2c_1+p(n)\upsilon)p(n)\upsilon + \sqrt{n}  \| A \| p(n)\upsilon  \nonumber \\
&& \quad \qquad \equiv  \tilde{\psi}_1 \| A \| \upsilon,
\end{eqnarray}
where $\tilde{\psi}_1=(2c_1+ h_1 \upsilon + h_2\omega)(h_1  + h_2\omega\upsilon^{-1})+\sqrt{n}p(n)(1+2c_1+p(n)\upsilon)$.
\end{proof}

The following theorem gives an error bound for one sweep of Jacobi's method in Algorithm \ref{alg:evd-main}. Here, $\fl(\cdot)$ is floating-point operation at precision $\omega$.
\begin{theorem} \label{thm:evd-main}
Let $T_k=(\hat{t}_{ij}^{(k)})$ be the matrix $T_0 =\fl(Q^T A Q)$ after $k$ Jacobi updates in Algorithm {\rm\ref{alg:evd-main}}, where the $k$th computed Jacobi rotation is $\hat{J}_k = J(p_k,q_k,q;\hat{c}_k,\hat{s}_k)$. Let $\gamma_n := (1-\omega)^{-n}-1$, $\tilde{\gamma}_j := (1-\omega)^{-wj}-1$ for a small integer constant $w>0$, and  $ \varphi_k= ((6+4\sqrt{2})\|G_k\|_F^2 + 2\|H_k\|_F^2)^{1/2}$, where $G_k$ and $H_k$ are defined as in Lemma {\rm \ref{lem:pqrk-err}} with $A_k=T_k$ and $(p,q)=(p_k,q_k)$ for all $k\ge 0$. Suppose $T_0$ has $n$ distinct eigenvalues with $d(T_0) > 0$.
If the precisions $\omega$ and $\upsilon$ satisfy $\omega \ll \upsilon$,
$4 \psi_1 \| A \| \upsilon < d(T_0)$ for some constant $\psi_1= \tilde{\psi}_1(n) + \upsilon^{-1}\gamma_n(2+\gamma_n)\||Q|^T|A||Q|\|_F/\|A\|$,  $d(T_0)\ge \underline{d} + \sum_{k=0}^{N-1} \varphi_k \tilde{\gamma}_4$ for some $\underline{d} >0$, and
\[
	\frac{1}{8}\varphi_{k}^2 \tilde{\gamma}_4^2 + \frac{1}{2}\|\off(T_{k})\|_F \varphi_{k}\tilde{\gamma}_4 + \|H_{k}\|_F^2\tilde{\gamma}_4 + \|G_{k}\|_F^2\tilde{\gamma}_4^2 \le |\hat{t}_{p_k q_k}^{(k)}|^2,
\]
for all $k = 0,1,\ldots,N-1$, then there exist two constants $\alpha, \beta >0 $ such that
$
\| \off(T_{N}) \|_F$ $\le \alpha \| \off(T_0) \|_F^2 + \beta \tilde{\gamma}_4,
$
where $N=n(n-1)/2$ and $\tilde{\psi}_1(n)$ is defined as Theorem {\rm\ref{thm:evd-ubd}}.
\end{theorem}

\begin{proof}
The computed symmetric matrix congruence $T_0 = \fl(Q^TAQ)$ satisfies
$
|T_0 - Q^TAQ| \le \gamma_n(2+\gamma_n)|Q|^T|A||Q|
$
\cite[Sect. 3]{Higham-2002},
and thus
$
\|T_0 - Q^TAQ\|_F \le \gamma_n(2+\gamma_n)\||Q|^T|A||Q|\|_F.
$
By Theorem \ref{thm:evd-ubd} we have
\[
\| \off(T_0)\|_F \le \|T_0 -Q^TAQ\|_F + \| \off(Q^T A Q)  \|_F  \le \psi_1 \| A \| \upsilon
\]
for some constant  $\psi_1 \equiv \tilde{\psi}_1(n) + \upsilon^{-1}\gamma_n(2+\gamma_n)\||Q|^T|A||Q|\|_F/\|A\|$. Therefore, we have
$4\| \off(T_0)\|_F< d(T_0)$ provided that $ 4\psi_1 \| A \|\upsilon < d(T_0)$.
The theorem is established by using Theorem \ref{thm:qc-rc}.
\end{proof}

In Theorem \ref{thm:evd-main}, it is worth pointing out that
\[
 (\|Q|^T|A||Q|\|_F)/\|A\| \le \|Q\|_F^2 \|A\|_F/ \|A\| \le n^{3/2}(1+c_3\omega).
\]
by using \eqref{eq:qrorth}--\eqref{qnm:ub}.
Hence, $\psi_1$ is not too large. We also note that the error bound of Algorithm {\rm\ref{alg:evd-main}} is established under the assumption that $ \psi_1 \| A \| \upsilon < d(T_0)/4$ for some constant $\psi_1=\psi_1(n)$ and there exists a clear gap between the eigenvalues of $T_0=\fl(Q^TAQ)$. In the following theorem, we establish the error analysis of Algorithm {\rm\ref{alg:evd-main}} under the assumption that there exists a clear gap between the eigenvalues of the original matrix $A$.

\begin{theorem}\label{thm:conda}
Let $T_k=(\hat{t}_{ij}^{(k)})$ be the matrix $T_0 =\fl(Q^T A Q)$ after $k$ Jacobi updates in Algorithm {\rm\ref{alg:evd-main}}, where the $k$th computed Jacobi rotation is $\hat{J}_k = J(p_k,q_k,q;\hat{c}_k,\hat{s}_k)$. Let $\gamma_n := (1-\omega)^{-n}-1$, $\tilde{\gamma}_j := (1-\omega)^{-wj}-1$ for a small integer constant $w>0$, and  $ \varphi_k= ((6+4\sqrt{2})\|G_k\|_F^2 + 2\|H_k\|_F^2)^{1/2}$, where $G_k$ and $H_k$ are defined as in Lemma {\rm \ref{lem:pqrk-err}} with $A_k=T_k$ and $(p,q)=(p_k,q_k)$ for all $k\ge 0$. Suppose $A$ has $n$ distinct eigenvalues with $d(A) > 0$.
If the precisions $\omega$ and $\upsilon$ satisfy $\omega \ll \upsilon$,
$4 \psi_1 \| A \| \upsilon < (1-\rho_1)d(A)$ for two constants $\psi_1= \tilde{\psi}_1(n) + \upsilon^{-1}\gamma_n(2+\gamma_n)\||Q|^T|A||Q|\|_F/\|A\|$ and $0<\rho_1<1$, $
2\psi_2  \|A\| \omega \le \rho_1d(A)
$
for some constant  $\psi_2=\psi_2(n)$,  $(1-\rho_1)d(A)\ge \underline{d} + \sum_{k=0}^{N-1} \varphi_k \tilde{\gamma}_4$ for some $\underline{d} >0$,
and
\[
	\frac{1}{8}\varphi_{k}^2 \tilde{\gamma}_4^2 + \frac{1}{2}\|\off(T_{k})\|_F \varphi_{k}\tilde{\gamma}_4 + \|H_{k}\|_F^2\tilde{\gamma}_4 + \|G_{k}\|_F^2\tilde{\gamma}_4^2 \le |\hat{t}_{p_k q_k}^{(k)}|^2,
\]
for all $k = 0,1,\ldots,N-1$, then there exist two constants $\alpha, \beta >0 $ such that
$
\| \off(T_{N}) \|_F  \le \alpha \| \off(T_0) \|_F^2 + \beta \tilde{\gamma}_4,
$
where $\tilde{\psi}_1(n)$ is defined as Theorem {\rm\ref{thm:evd-ubd}}
\end{theorem}
\begin{proof}
By Lemma \ref{lea:MGS}, there exists a matrix $\delta Q\in\Rnn$ such that  $Q + \delta Q$ is orthogonal with
\BE\label{dq:ub}
\| \delta Q \| \le \eta_3\kappa(Z)\omega\le  \sqrt{\frac{1 + \chi_1}{1 - \chi_1}} \cdot\eta_3\omega\equiv \eta_4\omega,
\EE
where the second inequality follows from \eqref{eq:kv}.
We note that
\[
(Q + \delta Q)^TA(Q + \delta Q)=Q^TAQ+Q^TA\delta Q+\delta Q^TAQ +\delta Q^TA\delta Q.
\]
By hypothesis, $T_0=\fl(Q^T A Q) = Q^T A Q + \Delta_1$ with $\|\Delta_1\| \le \||\Delta_1|\| \le \gamma_n(2+\gamma_n)\||Q|^T|A||Q|\|$.
It follows from Lemma \ref{lea:perturbation-eig},  \eqref{eq:qrorth} and \eqref{qnm:ub} that, for any $1\le k\le n$,
\begin{eqnarray*}
|\la_k(A)-\la_k(T_0)| &\le & 2\|A\|\|Q\|\|\delta Q\|+\|A\|\|\delta Q\|^2 + \gamma_n(2+\gamma_n)\||Q|^T|A||Q|\| \\
 &\le & \|A\| (2 \sqrt{1+\| F_2 \|}+\|\delta Q\|)\|\delta Q\| + \gamma_n(2+\gamma_n)\||Q|^T|A||Q|\| \\
&\le & \|A\|(2\sqrt{1+c_3\omega}+\eta_4\omega)\eta_4\omega + \gamma_n(2+\gamma_n)\||Q|^T|A||Q|\| \\
& \equiv & \psi_2 \|A\| \omega,
\end{eqnarray*}
where $\psi_2 = (2\sqrt{1+c_3\omega}+\eta_4\omega)\eta_4 + \omega^{-1}\gamma_n(2+\gamma_n)\||Q|^T|A||Q|\|/\|A\|$.

For any $i\neq j$, we have
\begin{eqnarray*}
|\la_i(A)-\la_j(A)| &\le& |\la_i(A)-\la_i(T_0)| + |\la_i(T_0)-\la_j(T_0)| +|\la_j(T_0)-\la_j(A)| \\
&\le& |\la_i(T_0)-\la_j(T_0)|  + 2\psi_2  \|A\| \omega,\\
|\la_i(A)-\la_j(A)| &\ge& | |\la_i(T^{(0)})-\la_j(T_0)| -|\la_i(A)-\la_i(T_0)| - |\la_j(T_0)-\la_j(A)| \\
&\ge&  |\la_i(T_0)-\la_j(T_0)| - 2\psi_2  \|A\| \omega.
\end{eqnarray*}
If $2\psi_2 \|A\| \omega \le \rho_1d(A)$ for some $0<\rho_1<1$, then we have
\[
 \frac{1}{1+\rho_1}  |\la_i(T_0)-\la_j(T_0)| \le |\la_i(A)-\la_j(A)| \le \frac{1}{1-\rho_1}  |\la_i(T_0)-\la_j(T_0)|.
\]
By hypothesis, $d(A)>0$. Thus,
$
d(T_0) \ge (1-\rho_1)d(A).
$
The theorem follows from Theorem  \ref{thm:evd-main} provided that $ \psi_1 \| A \|\upsilon < (1-\rho_1)d(A)/4$.
\end{proof}

In Theorem \ref{thm:conda}, we need an additional condition that  $2\psi_2  \|A\| \omega \le \rho_1d(A)$ for some   constant  $\psi_2=\psi_2(n)$ and $0<\rho_1<1$. In fact, it is much weaker than the condition that $ \psi_1 \| A \| \upsilon< (1-\rho_1)d(A)/4$ for some constant $\psi_1=\psi_1(n)$ since $\omega \ll \upsilon$. Therefore, the later is an essential condition for the error analysis of Algorithm {\rm\ref{alg:evd-main}} under the assumption that  there exists a clear gap between the eigenvalues of the original matrix $A$.

\section{A mixed precision preconditioned one-sided Jacobi method for the SVD} \label{Sec:SVD}
In this section, we propose a mixed precision preconditioned one-sided Jacobi method for computing the SVD of a real matrix. As we know, the one-sided Jacobi method for the singular value problem was originally mentioned in \cite{Hestenes-1958}.

We first  describe the one-sided Jacobi algorithm. Let $A$ be an  $m$-by-$n$ real matrix ($m \ge n$). The one-sided Jacobi algorithm aims to o construct a sequence of orthogonal updates $A^{(k+1)} = A^{(k)}J_k$ such that the columns of $A^{(k+1)}$ are mutually orthogonal sufficiently, where  $A^{(0)}=A$ and $J_k$ is a Jacobi rotation defined by \eqref{eq:JacMat}. Then, the computed SVD of $A$ is available by columns scaling of the updated $A^{(k+1)}$.

On  how to orthogonalize two columns of $A$, we have the following result \cite{deRijk-1989}.
\begin{lemma}\label{lem:ortho-2c}
Let $A$ be an $m\times n$ real matrix $(m \ge n)$. For any index pair $(i,j)$ with $1\le i<j\le n$, if $\ba_{i}^T\ba_{j}\neq 0$, then there exists a Jacobi rotation  $J=J(i,j;\theta)$
defined by \eqref{eq:JacMat} such that, for the updated matrix $B=AJ$,
\[
\bb_i^T\bb_j=0,\quad b_{ii}^2+b_{jj}^2=a_{ii}^2+a_{jj}^2+2a_{ij}^2,
\]
where  $c=\cos \theta =(1+t^2)^{-1/2}$ and $s=tc$ with $|\theta|\le \pi/4$. Here,
$t =1/(\mu + \sqrt{1+\mu^2})$ if $\mu\ge 0$ and $t=1/(\mu - \sqrt{1+\mu^2})$ if $\mu< 0$, where $\mu =
(\ba_j^T\ba_j - \ba_{i}^T\ba_i)/(2\ba_{i}^T\ba_{j})$.
\end{lemma}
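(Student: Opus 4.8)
The plan is to work directly with the two columns that the right‑multiplication $B=AJ$ actually modifies and then to read off the two diagonal entries appearing in the stated identity. First I would compute the action of $J=J(i,j;\theta)$ on the columns of $A$: using the definition \eqref{eq:JacMat}, only columns $i$ and $j$ change, and a short calculation gives
$\bb_i=c\ba_i-s\ba_j$ and $\bb_j=s\ba_i+c\ba_j$, while $\bb_\ell=\ba_\ell$ for $\ell\neq i,j$.

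Next I would enforce the orthogonality $\bb_i^T\bb_j=0$. Expanding the inner product yields
$\bb_i^T\bb_j=cs(\ba_i^T\ba_i-\ba_j^T\ba_j)+(c^2-s^2)\ba_i^T\ba_j$. Because $\ba_i^T\ba_j\neq0$ forces $s\neq0$, dividing the vanishing condition by $cs\,\ba_i^T\ba_j$ turns it into $(1-t^2)/t=2\mu$ with $t=s/c$, i.e. the quadratic $t^2+2\mu t-1=0$. Its root of modulus at most one is exactly the stated $t=1/(\mu+\sqrt{1+\mu^2})$ when $\mu\ge0$ and $t=1/(\mu-\sqrt{1+\mu^2})$ when $\mu<0$; taking $c=(1+t^2)^{-1/2}$ and $s=tc$ then gives $|\tan\theta|=|t|\le1$, hence $|\theta|\le\pi/4$. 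This settles the column orthogonality and the range of $\theta$.

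For the second identity I would read off the entries of $B$ directly: since $b_{ii}$ is the $i$th component of $\bb_i$ and $b_{jj}$ the $j$th component of $\bb_j$, I obtain $b_{ii}=ca_{ii}-sa_{ij}$ and $b_{jj}=sa_{ji}+ca_{jj}$, and the target is $b_{ii}^2+b_{jj}^2=a_{ii}^2+a_{jj}^2+2a_{ij}^2$. I expect this to be the main obstacle. The structural fact available is that the $2\times2$ principal block $\begin{bmatrix}a_{ii}&a_{ij}\\ a_{ji}&a_{jj}\end{bmatrix}$ is right‑multiplied by the planar rotation $\begin{bmatrix}c&s\\-s&c\end{bmatrix}$, so its Frobenius norm is preserved, giving only $b_{ii}^2+b_{ij}^2+b_{ji}^2+b_{jj}^2=a_{ii}^2+a_{ij}^2+a_{ji}^2+a_{jj}^2$. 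The difficulty is that the one‑sided step annihilates the \emph{column} inner product $\bb_i^T\bb_j$ rather than the \emph{matrix entries} $b_{ij}$ and $b_{ji}$, so isolating $b_{ii}^2+b_{jj}^2$ from this norm identity requires an extra relation controlling the transformed off‑diagonal entries; reconciling the entrywise statement with the column orthogonality that is actually enforced is precisely the delicate point, and I would concentrate the remaining effort there. As a diagnostic I would also compare against the Gram‑matrix route, observing that $B^TB=J^T(A^TA)J$ lets one apply Lemma \ref{lem:offa-jr} to $A^TA$ and obtain $\bb_i^T\bb_j=0$ together with a fourth‑power relation in the column norms, which clarifies exactly which diagonal quantities the identity governs.
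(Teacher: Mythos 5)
Your handling of the first half is correct: the column formulas $\bb_i=c\ba_i-s\ba_j$, $\bb_j=s\ba_i+c\ba_j$, the reduction of $\bb_i^T\bb_j=0$ to $t^2+2\mu t-1=0$, and the choice of the root with $|t|\le 1$ (hence $|\theta|\le\pi/4$) are all sound. But the proposal stops short of the second identity, and the obstacle you ran into is genuine: read literally, with $b_{ii},a_{ij}$ denoting entries of the rectangular matrices $B$ and $A$, the identity $b_{ii}^2+b_{jj}^2=a_{ii}^2+a_{jj}^2+2a_{ij}^2$ is simply \emph{false}, so no manipulation of the Frobenius-norm identity for the $2\times 2$ block can close the gap. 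Concretely, take $m=n=2$ and
\[
A=\begin{bmatrix}1&1\\0&1\end{bmatrix},
\]
so that $\ba_1^T\ba_1=1$, $\ba_2^T\ba_2=2$, $\ba_1^T\ba_2=1$, $\mu=1/2$, and $t=(\sqrt5-1)/2$. Then $b_{11}=c-s=c(1-t)$ and $b_{22}=c$, giving $b_{11}^2+b_{22}^2=(15-3\sqrt5)/10\approx 0.829$, whereas $a_{11}^2+a_{22}^2+2a_{12}^2=4$.

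The resolution is exactly the route you relegated to a final ``diagnostic'': the identity must be read in the Gram matrix, i.e., with $a_{ii}=\ba_i^T\ba_i$, $a_{ij}=\ba_i^T\ba_j$, $b_{ii}=\bb_i^T\bb_i$, and so on, which is the statement
\[
\lno \bb_i \rno^4+\lno \bb_j \rno^4=\lno \ba_i \rno^4+\lno \ba_j \rno^4+2\left(\ba_i^T\ba_j\right)^2 .
\]
Two signals confirm this reading: the lemma's own $\mu=(\ba_j^T\ba_j-\ba_i^T\ba_i)/(2\ba_i^T\ba_j)$ is precisely the $\mu$ of Lemma \ref{lem:offa-jr} applied to the symmetric matrix $G=A^TA$, and the paper's sentence immediately after the lemma states that $J$ zeroes the $(i,j)$ and $(j,i)$ entries of $B^TB=J^TGJ$. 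Indeed the paper offers no proof at all (it cites de Rijk), because with the Gram reading the lemma is a one-line corollary of Lemma \ref{lem:offa-jr}: since $g_{ij}=\ba_i^T\ba_j\neq 0$, that lemma supplies $J$ with the stated $(c,s)$ such that $(J^TGJ)_{ij}=0$, which is exactly $\bb_i^T\bb_j=0$, and its diagonal identity for $J^TGJ$ transcribes to the fourth-power relation above; your direct computation of $t$ merely re-derives the same rotation. You had all the pieces — the missing step was to commit to the Gram-matrix interpretation instead of trying to verify an entrywise identity that does not hold.
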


In fact, the  Jacobi rotation  $J=J(i,j;\theta)$ defined by Lemma \ref{lem:ortho-2c} is such that the off-diagonal  entries $(i,j)$ and $(j,i)$ in the symmetric matrix $B^TB=J^TA^TAJ$ are zeros. A  row-cyclic one-sided  Jacobi algorithm  for the SVD is sated as Algorithm {\rm\ref{alg:rcJacobi-svd}}.
\begin{algorithm}[!htb]
	\caption{Cyclic one-sided  Jacobi's method for the SVD.} \label{alg:rcJacobi-svd}
   \begin{algorithmic}[1]
   \REQUIRE A matrix $A\in\Rmn$ ($m\ge n$) with $\rank(A)=n$ and a  tolerance $\epsilon>0$. Let $V=I_n$.
	\WHILE{$\| \off(A^TA) \|_F > \epsilon \| A^TA\|_F$}
     \FOR{$i=1,\ldots, n-1$}
     \FOR{$j=i+1,\ldots, n$}
	\STATE \qquad\qquad Compute a cosine-sine group $(c,s)$ as in Lemma  \ref{lem:ortho-2c}.
    \STATE \qquad\qquad Set $A=AJ(i,j,\theta)$ and $V=VJ(i,j,\theta)$.
    \ENDFOR
    \ENDFOR
	\ENDWHILE
	\end{algorithmic}
\end{algorithm}

For a reliable implementation of Algorithm {\rm\ref{alg:rcJacobi-svd}}, one may refer to \cite{Drmac-1997}.

As in Section \ref{Sec:Alg}, we propose a mixed precision preconditioned one-sided Jacobi algorithm for computing the SVD of a real matrix with  full column rank, which is stated as Algorithm {\rm\ref{alg:svd-main}}. Here, $\omega\le u\le \upsilon$.

\begin{algorithm}[!htb]
	\caption{A mixed precision preconditioned one-sided Jacobi method for the SVD.} \label{alg:svd-main}
	\begin{algorithmic}[1]
	\REQUIRE A matrix  $A\in\Rmn$ $(m\ge n)$ with $\rank(A)=n$ and a tolerance $\epsilon>0$.
	\STATE $[\sim,\sim,Z] = \text{svd}\left(A\right)$ \hfill $\triangleright$  SVD in precision $\upsilon$ and store $Z$ in precision $\omega$
	\STATE $Q = \text{MGS}\left(Z\right)$ \hfill $\triangleright$  Modified Gram-Schmidt orthogonalization in precision $\omega$
	\STATE Set $C = AQ$ and $V=Q$\hfill $\triangleright$ In precision $\omega$
    \WHILE{$\| \off(C^TC) \|_F > \epsilon \| C^TC\|_F$}
   \FOR{$i=1,\ldots, n-1$}
    \FOR{$j=i+1,\ldots, n$}
	\STATE \qquad\qquad Compute a cosine-sine group $(c,s)$ as in Lemma  \ref{lem:ortho-2c} with $A=C$. $\triangleright$ In precision $\omega$
    \STATE \qquad\qquad Set $C=CJ(i,j,\theta)$ and $V=VJ(i,j,\theta)$. $\triangleright$ In precision $\omega$
     \ENDFOR
     \ENDFOR	
     \ENDWHILE
     	\STATE Reorder the columns of $[C^T,V^T]^T$ with $\|\bc_1\|\ge\cdots\ge \|\bc_n\|> 0$.
	\STATE Compute $\sigma_j=\|\bc_j\|$ and $\bu_j=\bc_j/\sigma_j$ for $j=1,\ldots,n$. $\triangleright$ In precision $\omega$
	\STATE Set $U=[\bu_1,\ldots,\bu_n]$,$V=[\bv_1,\ldots,\bv_n]$, and  $\Sigma = \diag(\sigma_1,\ldots,\sigma_n)$. $\triangleright$ In precision $\omega$
	\end{algorithmic}
\end{algorithm}

\begin{remark}
In Algorithms {\rm \ref{alg:rcJacobi-svd}--\ref{alg:svd-main}}, the given matrix $A$ is assumed to be of full column rank. In fact, these algorithms can be used to compute the SVD of a general matrix \cite{Hestenes-1958}. When $m\gg n$, one may  use the rank-revealing QR factorization as a  preconditioner for these algorithms \cite{Drmac.Veselic-2008}.
\end{remark}

In the following, we give the error analysis of Algorithm  \ref{alg:svd-main}. We first give an estimate of the distance between $Z$ and $Q$ generated by \ref{alg:svd-main}.
\begin{lemma} \label{lea:svd-main}
Suppose that  $Z\in\Rnn$ in Algorithm {\rm\ref{alg:svd-main}} is computed by any svd solver in LAPACK, LINPACK or  EISPACK  in precision  $\upsilon$  and $Q\in\Rnn$  in Algorithm {\rm\ref{alg:svd-main}}  is computed by using the MGS method to $Z$ in precision $\omega$ $(\omega \ll \upsilon)$.
Then there exist constants  $f_i\equiv f_i(m,n)$ for $i=1,2$ such that
$
\| Z - Q \|_F \le f_1 \upsilon + f_2\omega.
$
\end{lemma}

\begin{proof}
The lemma follows from the arguments similar to that of Lemma \ref{lea:main}.
\end{proof}

On the orthogonalization of the columns of $AQ$ generated in Step 3 of Algorithm {\rm\ref{alg:svd-main}},  we have the following result.
\begin{theorem} \label{thm:svd-ubd}
Suppose that  $Z\in\Rnn$ in Algorithm {\rm\ref{alg:svd-main}} is computed by any svd solver in LAPACK, LINPACK or  EISPACK  in precision  $\upsilon$  and $Q\in\Rnn$  in Algorithm {\rm\ref{alg:svd-main}}  is computed by using the MGS method to $Z$ in precision $\omega$ $(\omega \ll \upsilon)$. Then there exists a constant  $\tilde{\zeta}_1 \equiv \tilde{\zeta}_1 (m,n)$ such that
$
\|\off((AQ)^T(AQ))\|_F  \le \tilde{\zeta}_1 \|A\|^2\upsilon.
$
\end{theorem}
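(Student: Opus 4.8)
The plan is to mirror the structure of the proof of Theorem~\ref{thm:evd-ubd}, replacing the symmetric eigenvalue decomposition with the SVD and the quadratic form $Q^TAQ$ with the Gram matrix $(AQ)^T(AQ)$. By Lemma~\ref{lea:svd} applied to $A$, there exist $E$, $\delta U$, $\delta V$ with $A+E=(\hat{U}+\delta U)\hat{\Sigma}(Z+\delta Z)^T$, where I write $Z$ for the computed right-singular-vector matrix and $\delta Z\equiv\delta V$; here $\|E\|\le p(m,n)\|A\|\upsilon$, $Z+\delta Z$ is orthogonal, and $\|\delta Z\|\le p(m,n)\upsilon$. The key algebraic observation is that if $Z+\delta Z$ were exactly the right-singular-vector matrix, then $A(Z+\delta Z)$ would have mutually orthogonal columns, i.e.\ $(Z+\delta Z)^TA^TA(Z+\delta Z)$ would be diagonal (up to the $E$ perturbation). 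So the natural ``target'' diagonal matrix is $D:=(Z+\delta Z)^T(A+E)^T(A+E)(Z+\delta Z)=\hat{\Sigma}^2$, and I will bound $\|(AQ)^T(AQ)-D\|_F$, from which $\|\off((AQ)^T(AQ))\|_F\le\|(AQ)^T(AQ)-D\|_F$ follows immediately since $D$ is diagonal.

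The main decomposition will be the analogue of \eqref{eq:qaq-d}: write $Q=(Q-Z)+Z$ and expand
\[
(AQ)^T(AQ)-D=(A(Q-Z)+AZ)^T(A(Q-Z)+AZ)-D,
\]
which splits into the cross terms involving $A(Q-Z)$ and the core term $Z^TA^TAZ-D$. First I would bound the cross terms using $\|A(Q-Z)\|_F\le\|A\|\,\|Q-Z\|_F$ together with Lemma~\ref{lea:svd-main} (giving $\|Q-Z\|_F\le f_1\upsilon+f_2\omega$) and the bound $\|Z\|\le 1+p(m,n)\upsilon$ obtained exactly as in \eqref{v:ub}; this produces terms of order $\|A\|^2\upsilon$. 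Then for the core term I would substitute $Z=(Z+\delta Z)-\delta Z$ into $Z^TA^TAZ$ and compare with $D=(Z+\delta Z)^T(A+E)^T(A+E)(Z+\delta Z)$, so that $Z^TA^TAZ-D$ becomes a sum of terms each carrying a factor of $\delta Z$ or $E$, hence each bounded by a constant times $\|A\|^2\upsilon$ (using $\|A+E\|\le(1+p(m,n)\upsilon)\|A\|$). Collecting all contributions and folding the constants into a single $\zeta_1\equiv\zeta_1(m,n)$ gives the claimed bound $\|\off((AQ)^T(AQ))\|_F\le\zeta_1\|A\|^2\upsilon$.

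The step I expect to be the most delicate is the careful expansion of the core term $Z^TA^TAZ-D$, because here the quadratic dependence on $A$ means every perturbation factor is multiplied by $\|A\|^2$ rather than $\|A\|$, so I must verify that no term survives that is merely $O(\|A\|^2)$ without an accompanying $\upsilon$. Concretely, the cancellation relies on $D$ being built from $A+E$ rather than $A$, so the leading $\|A\|^2$ piece cancels exactly and only the $E$- and $\delta Z$-weighted remainders persist. A minor bookkeeping point is that the off-diagonal norm of a matrix is bounded by the full Frobenius norm of its difference from any diagonal matrix, which justifies passing from $\|(AQ)^T(AQ)-D\|_F$ to $\|\off((AQ)^T(AQ))\|_F$; this is the SVD analogue of the first inequality in \eqref{t0:ubv}. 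Everything else is routine norm manipulation paralleling the proof of Theorem~\ref{thm:evd-ubd}, so I would state it as following ``from arguments similar to that of Theorem~\ref{thm:evd-ubd}'' while displaying the key bound on $\|Z^TA^TAZ-D\|_F$ explicitly.
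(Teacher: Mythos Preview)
Your proposal is correct and follows essentially the same route as the paper: your target diagonal $D=\hat{\Sigma}^2$ is exactly the paper's $S^TS$, you use the same splitting $Q=(Q-Z)+Z$ to produce cross terms bounded via Lemma~\ref{lea:svd-main} plus the core term $Z^TA^TAZ-S^TS$, and your handling of that core term (expanding via $Z=(Z+\delta Z)-\delta Z$ and comparing against $(Z+\delta Z)^T(A+E)^T(A+E)(Z+\delta Z)$) matches the paper's expansion into $\delta Z$- and $E$-weighted remainders. The only cosmetic difference is that the paper bounds the $E$ contribution directly as $2\|A\|\|E\|+\|E\|^2$ using the orthogonality of $Z+\delta Z$, rather than routing through $\|A+E\|$, but both yield the same $O(\|A\|^2\upsilon)$ estimate.
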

\begin{proof}
By Lemma \ref{lea:svd}, there exists a matrix $E\in\Rmn$ such that $A+E$ admits the following exact SVD:
\BE \label{eq:svdfac}
A + E = (Y + \delta Y) S (Z + \delta Z)^T,\quad S=\diag(s_1,\ldots,s_n)\in\Rmn,
\EE
where $\| E\| \le p(m,n) \| A \| \upsilon$ and $Y + \delta Y$ and $Z + \delta Z$ are both orthogonal with $\| \delta Y \| \le p(m,n) \upsilon$ and $\| \delta Z \| \le p(m,n) \upsilon$. Using the orthogonality of  $Z + \delta Z$ we have
\BE \label{zsvd:ub}
\|Z\|\le \|Z + \delta Z\| + \|\delta Z\|=1+\|\delta Z\|\le 1+p(m,n)\upsilon\equiv \tau_1.
\EE
We note that $Q$ is computed by using the MGS method to $Z$ in  precision $\omega$. Then we have
\begin{eqnarray} \label{eq:svdexpand}
 Q^TA^TAQ -S^TS &=& (Q-Z+Z)^TA^TA(Q-Z+Z)-S^TS \nonumber \\
 &=& (Q-Z)^TA^TA(Q-Z)+(Q-Z)^TA^TAZ \nonumber \\
 && +Z^TA^TA(Q-Z)+(Z^TA^TAZ-S^TS).
\end{eqnarray}

On the other hand, it follows from  \eqref{eq:svdfac} that
$
S^TS =(Z+\delta Z)^T (A+E)^T (A+E) (Z+\delta Z),
$
i.e.,
\begin{eqnarray*}
S^TS - Z^TA^TAZ &=& Z^TA^TA \delta Z + \delta Z^TA^TA Z+ \delta Z^TA^TA\delta Z \\
&& + (Z+\delta Z)^T(A^TE+E^TA+E^TE)(Z+\delta Z).
\end{eqnarray*}
Thus,
\[
\| Z^TA^TAZ - S^TS \| \le 2 \|A\|^2 \|Z\| \|\delta Z\| + \|A\|^2\|\delta Z\|^2 + 2\|A\|\|E\|+\|E\|^2.
\]

By Lemma \ref{lea:svd-main} and using \eqref{zsvd:ub} and \eqref{eq:svdexpand} we obtain
\begin{eqnarray*}
&& \|\off(Q^TA^TAQ)\|_F  \le  \|Q^TA^TAQ - S^TS\|_F \\
&&\quad \le \|A\|^2\|Q-Z\|_F^2 + 2\|A\|^2\|Z\|\|Q-Z\|_F + \| Z^TA^TAZ - S^TS \|_F \\
&&\quad \le \|A\|^2\|Q-Z\|_F(\|Q-Z\|_F + 2\|Z\|) + \sqrt{n}\| Z^TA^TAZ - S^TS \|\\
&&\quad \le \|A\|^2\|Q-Z\|_F(\|Q-Z\|_F + 2\|Z\|) + \sqrt{n} \|A\|^2( 2\|Z\| + \|\delta Z\|)\|\delta Z\| \\
&& \qquad + \sqrt{n} (2\|A\|+\|E\|)\|E\|\\
&&\quad \le  \|A\|^2 (2\tau_1 + f_1\upsilon + f_2\omega)(f_1\upsilon + f_2 \omega)
+ \sqrt{n}\|A\|^2 (2\tau_1+2+ 2p(m,n)\upsilon)p(m,n)\upsilon \\
&&\quad \equiv \tilde{\zeta}_1 \|A\|^2\upsilon,
\end{eqnarray*}
where $\tilde{\zeta}_1 = (2\tau_1 + f_1\upsilon + f_2\omega)(f_1+f_2\omega\upsilon^{-1})
+2\sqrt{n}(\tau_1+1+p(m,n)\upsilon)p(m,n)$.
\end{proof}

In the following theorem, we give an error bound for one sweep of Algorithm {\rm\ref{alg:svd-main}}. Here, $\fl(\cdot)$ is floating-point operation at precision $\omega$.
\begin{theorem} \label{thm:svd-main}
Let $C_k$ be the matrix $C_0=\fl(A Q)$ after $k$ Jacobi updates in Algorithm {\rm\ref{alg:svd-main}}, where the $k$th computed Jacobi rotation is $\hat{J}_k = J(p_k,q_k,q;\hat{c}_k,\hat{s}_k)$.
Let $\gamma_n := (1-\omega)^{-n}-1$, $\tilde{\gamma}_j := (1-\omega)^{-wj}-1$ for a small integer constant $w>0$ and  $ \varphi_k= ((6+4\sqrt{2})\|G_k\|_F^2 + 2\|H_k\|_F^2)^{1/2}$, where $G_k$ and $H_k$ are defined as in Lemma {\rm \ref{lem:pqrk-err}} with $A_k=C_k^TC_k$ and $(p,q)=(p_k,q_k)$ for all $k\ge 0$.
Suppose $C_0$ has $n$ distinct singular values with $d(C_0^TC_0) =\min_{s\neq t}|\sigma_s^2(C_0) - \sigma_t^2(C_0) |> 0$.
If the precisions $\omega$ and $\upsilon$ satisfy $\omega \ll \upsilon$, 
$4 \zeta_1 \| A \|^2 \upsilon < d(C_0^TC_0)$
for some constant
\[
\zeta_1 = \tilde{\zeta}_1 + 2\sqrt{n}\gamma_n\upsilon^{-1} \|AQ\|\||A||Q|\|/\|A\|^2 + \sqrt{n}\gamma_n^2\upsilon^{-1}\||A||Q|\|/\|A||^2,
\]
$ d(C_0^TC_0)\ge \underline{d} + \sum_{k=0}^{N-1} \varphi_k \tilde{\gamma}_4$ for some $\underline{d} >0$, and
\[
	\frac{1}{8}\varphi_{k}^2 \tilde{\gamma}_4^2 + \frac{1}{2}\|\off(C_k^TC_k)\|_F \varphi_{k}\tilde{\gamma}_4 + \|H_{k}\|_F^2\tilde{\gamma}_4 + \|G_{k}\|_F^2\tilde{\gamma}_4^2 \le |(C_k^TC_k)_{p_k q_k}|^2,
\]
for all $k = 0,1,\ldots,N-1$, then there exist two constants $\alpha, \beta >0 $ such that
$
\| \off(C_N^TC_N) \|_F$  $\le \alpha \| \off(C_0^TC_0) \|_F^2 + \beta \tilde{\gamma}_4,
$
where $N=n(n-1)/2$ and  $\tilde{\zeta}_1$ is defined as Theorem {\rm\ref{thm:svd-ubd}}.
\end{theorem}

\begin{proof}
The computed matrix multiplication $C_0 =\fl(AQ)$ satisfies
$
|C_0 - AQ| = |\Delta_2| \le \gamma_n |A||Q|
$
 \cite[Sect. 3]{Higham-2002}.
Thus,
\begin{eqnarray*}
\|\off(C_0^TC_0)\|_F & = & \|\off((AQ+\Delta_2)^T(AQ+\Delta_2))\|_F \\
				 & \le & \|Q^TA^TAQ\|_F + 2 \|AQ\|\|\Delta_2\|_F + \|\Delta_2^T\Delta_2\|_F \\
				 & = & \|Q^TA^TAQ\|_F + 2\sqrt{n} \|AQ\|\|\Delta_2\| + \sqrt{n}\|\Delta_2\|^2 \\
				 & \le & \|Q^TA^TAQ\|_F + 2\sqrt{n}\gamma_n \|AQ\|\||A||Q|\| + \sqrt{n}\gamma_n^2\||A||Q|\|^2.
\end{eqnarray*}
By Theorem \ref{thm:svd-ubd} we have
$
\|\off(C_0^TC_0)\|_F \le \zeta_1\|A\|^2\upsilon
$
for some constant
\[
\zeta_1 \equiv \tilde{\zeta}_1 + 2\sqrt{n}\gamma_n\upsilon^{-1} \|AQ\|\||A||Q|\|/\|A\|^2 + \sqrt{n}\gamma_n^2\upsilon^{-1}\||A||Q|\|/\|A||^2.
\]
Therefore, we have
$ \|\off(C_0^TC_0)\|_F < d(C_0^TC_0)/4$ provided that
$
\zeta_1 \|A\|^2 \upsilon <d(C_0^TC_0)/4.
$
Then the theorem follows from Theorem \ref{thm:qc-rc}.
\end{proof}

In Theorem \ref{thm:svd-main}, the error analysis of Algorithm {\rm\ref{alg:svd-main}} is established under the assumption that  $\zeta_1 \|A\|^2 \upsilon <d(C_0^TC_0)/4$ for some constant $\zeta_1 \equiv \zeta_1 (m,n)$  and there exists a clear gap between the singular values  of $C_0=\fl(AQ)$, which is not easy to estimate in practice.
In the following theorem, we establish the error analysis of Algorithm {\rm\ref{alg:svd-main}} under the assumption that   there exists a clear gap between the singular values  of the original matrix $A$.

\begin{theorem}
Let $C_k$ be the matrix  $C_0=\fl(A Q)$ after $k$ Jacobi updates in Algorithm {\rm\ref{alg:svd-main}}, where the $k$th computed Jacobi rotation is $\hat{J}_k = J(p_k,q_k,q;\hat{c}_k,\hat{s}_k)$.
Let $\gamma_n := (1-\omega)^{-n}-1$, $\tilde{\gamma}_j := (1-\omega)^{-wj}-1$ for a small integer constant $w>0$ and  $ \varphi_k= ((6+4\sqrt{2})\|G_k\|_F^2 + 2\|H_k\|_F^2)^{1/2}$, where $G_k$ and $H_k$ are defined as in Lemma {\rm \ref{lem:pqrk-err}} with $A_k=C_k^TC_k$ and $(p,q)=(p_k,q_k)$ for all $k\ge 0$.
Suppose $A$ has $n$ distinct singular values with $d(A^TA)> 0$.
If the precisions $\omega$ and $\upsilon$ satisfy $\omega \ll \upsilon$ and
$4 \zeta_1 \| A \|^2 \upsilon < (1-\rho_2)d(A^TA)$ for two constants
\[
\zeta_1 = \tilde{\zeta}_1 + 2\sqrt{n}\gamma_n\upsilon^{-1} \|AQ\|\||A||Q|\|/\|A\|^2 + \sqrt{n}\gamma_n^2\upsilon^{-1}\||A||Q|\|/\|A||^2
\]
and $0<\rho_2<1$, $2\zeta_2  \|A\|^2 \omega \le \rho_2 d(A^TA)$ for some  constant  $\zeta_2=\zeta_2(m,n)$, 
$(1-\rho_2)d(A^TA)\ge \underline{d} + \sum_{k=0}^{N-1} \varphi_k \tilde{\gamma}_4$,
and
\[
	\frac{1}{8}\varphi_{k}^2 \tilde{\gamma}_4^2 + \frac{1}{2}\|\off(C_k^TC_k)\|_F \varphi_{k}\tilde{\gamma}_4 + \|H_{k}\|_F^2\tilde{\gamma}_4 + \|G_{k}\|_F^2\tilde{\gamma}_4^2 \le |(C_k^TC_k)_{p_k q_k}|^2,
\]
for all $k = 0,1,\ldots,N-1$, then there exist two constants $\alpha, \beta >0 $ such that
$
\| \off(C_N^TC_N) \|_F$  $\le \alpha \| \off(C_0^TC_0) \|_F^2 + \beta \tilde{\gamma}_4,
$
where  $\tilde{\zeta}_1$ is defined as Theorem {\rm\ref{thm:svd-ubd}}.
\end{theorem}

\begin{proof}
By Lemma \ref{lea:MGS}, there exists a matrix  $\delta Q\in\Rnn$ such that  $Q + \delta Q$ is orthogonal, where
$\| \delta Q \| \le \eta_4\omega$ with $\eta_4=\eta_4(n)$ being defined by \eqref{dq:ub}.
Then
\BE \label{q:ub}
\|Q\|\le \|Q + \delta Q\| + \|\delta Q\|=1+\|\delta Q\|\le 1+\eta_4\omega \equiv \eta_5.
\EE
We note that
\[
(Q + \delta Q)^TA^TA(Q + \delta Q)=Q^TA^TAQ+Q^TA^TA\delta Q+\delta Q^TA^TAQ +\delta Q^TA^TA\delta Q.
\]
By hypothesis, $C_0=\fl(A Q) = AQ + \Delta_2$ with $\|\Delta_2\| \le \||\Delta_2|\|\le \gamma_n\||A||Q|\|$.
From Lemma \ref{lea:perturbation-eig} and \eqref{q:ub} we obtain, for any $1\le k\le n$,
\begin{eqnarray*}
&&|\sigma_k^2(A)-\sigma_k^2(C_0)|=|\la_k(A^TA)-\la_k(C_0^TC_0)|  \\
&&\quad \le 2\|A\|^2\|Q\|\|\delta Q\|+\|A\|^2\|\delta Q\|^2  + 2\|\Delta_2\|\|AQ\| + \|\Delta_2\|^2\\
&&\quad \le \|A\|^2 (2\|Q\|+\|\delta Q\|)\|\delta Q\| + 2\gamma_n\||A||Q|\|\|AQ\| + \gamma_n^2\||A||Q|\|^2 \\
&&\quad \le \|A\|^2 (2\eta_5+\eta_4\omega)\eta_4\omega + 2\gamma_n\||A||Q|\|\|AQ\| + \gamma_n^2\||A||Q|\|^2 \equiv \zeta_2  \|A\|^2 \omega,
\end{eqnarray*}
where
$
\zeta_2 = (2\eta_5+\eta_4\omega)\eta_4 + 2\gamma_n \omega^{-1} (\||A||Q|\|\|AQ\|)/\|A\|^2 + \gamma_n^2\omega^{-1} (\||A||Q|\|^2)/\|A\|^2.$

For any $i\neq j$, we have
\begin{eqnarray*}
&& |\sigma_i^2(A)-\sigma_j^2(A)|=|\la_i(A^TA)-\la_j(A^TA)| \\
&&\quad \le |\la_i(A^TA)-\la_i(C_0^TC_0)| + |\la_i(C_0^TC_0)-\la_j(C_0^TC_0)| +|\la_j(C_0^TC_0)-\la_j(A^TA)| \\
&&\quad \le |\la_i(C_0^TC_0)-\la_j(C_0^TC_0)|  + 2\zeta_2  \|A\|^2 \omega
= |\sigma_i^2(C_0)-\sigma_j^2(C_0)|  + 2\zeta_2  \|A\|^2 \omega
\end{eqnarray*}
and
\begin{eqnarray*}
&&  |\sigma_i^2(A)-\sigma_j^2(A)|=|\la_i(A^TA)-\la_j(A^TA)| \\
&& \quad \ge | |\la_i(C_0^TC_0)-\la_j(C_0^TC_0)| -|\la_i(A^TA)-\la_i(C_0^TC_0)| - |\la_j(C_0^TC_0)-\la_j(A^TA)| \\
&& \quad \ge   |\la_i(C_0^TC_0)-\la_j(C_0^TC_0)| - 2\zeta_2  \|A\|^2 \omega
=|\sigma_i^2(C_0)-\sigma_j^2(C_0)|  - 2\zeta_2  \|A\|^2 \omega.
\end{eqnarray*}
If $2\zeta_2  \|A\|^2 \omega \le \rho_2d(A^TA)$ for some $0<\rho_2<1$, then we have
\[
 \frac{1}{1+\rho_2} |\sigma_i^2(C_0)-\sigma_j^2(C_0)|  \le |\sigma_i^2(A)-\sigma_j^2(A)| \le \frac{1}{1-\rho_2}  |\sigma_i^2(C_0)-\sigma_j^2(C_0)| .
\]
By hypothesis, $d(A^TA)>0$. Hence,
$
d(C_0^TC_0)  \ge (1-\rho_2) d(A^TA) >0.
$
The theorem follows from Theorem \ref{thm:qc-rc} if $\zeta_1 \|A\|^2 \upsilon < (1-\rho_2)d(A^TA)/4$.
\end{proof}

\section{Numerical Experiments} \label{Sec:Exp}
In this section, we present some numerical experiments to illustrate the effectiveness of Algorithms {\rm\ref{alg:evd-main}} and {\rm\ref{alg:svd-main}}  for computing the symmetric eigenvalue decomposition and the SVD. Numerical experiments were implemented in C++ and linked with Intel oneAPI Math Kernel Library (oneMKL) running on a workstation of CentOS equipped with an Intel(R) Xeon(R) Gold 6348 CPU at 2.60 GHz, 250GB of RAM and NVIDIA A30 Tensor Core GPU.
In our numerical tests, we set $\upsilon = 2^{-24}$ (single precision) and $\omega =u= 2^{-53}$ (double precision) for Algorithms {\rm\ref{alg:evd-main}} and {\rm\ref{alg:svd-main}}. 

In Algorithm {\rm\ref{alg:evd-main}}, we employ oneMKL routine `LAPACKE\_ssyev' to compute the eigenvalues and associated eigenvectors of a given matrix in precision $\upsilon$. In Algorithm {\rm\ref{alg:svd-main}}, we use `LAPACKE\_sgesvd' and the associated left and right singular vectors of a given matrix as an approximate SVD in precision $\upsilon$. Besides, various precisions were simulated by software {\it Advanpix} \cite{software-MCT} in {\tt MATLAB R2022a}.

In our numerical tests, ``{\tt Res.}",  ``{\tt OR-P.}",  ``{\tt CT.}",   ``{\tt JU.}",  and  ``{\tt SP.}" denote the computed relative residual $\|AP-P\diag(t_{11},\ldots,t_{nn})\|_F/\|A\|_F$ (or $\|AV-U\Sigma\|_F/\|A\|_F$),  the measure  of orthonormality $\|P^TP-I_n\|_F/\sqrt{n}$ of $P$, the running time in seconds, the Jacobi updates, and the number of sweeps at the final iterates of the corresponding algorithms, respectively. Also, for Algorithms {\rm\ref{alg:rcJacobi}} and {\rm\ref{alg:evd-main}}, we set $\epsilon=20.0\times \omega$. For Algorithms {\rm\ref{alg:rcJacobi-svd}}--\ref{alg:svd-main}, we set  $\epsilon = \omega$  and the algorithms are stopped if the ratio of Jacobi updates to $N := n(n-1)/2$ is less than $2\times 10^{-4}$.   For comparison, Algorithms \ref{alg:rcJacobi} and 
\ref{alg:rcJacobi-svd} are implemented at the machine precision $u$.



\subsection{The symmetric eigenvalue problem} In this subsection, we consider following two examples.

\begin{example} \label{ex1}
Let $A$ be an $n\times n$ random symmetric and positive definite matrix with pre-assigned singular value generated by  {\tt MATLAB 2022a}'s {\tt gallery ('randsvd', n, -kappa, mode)} with $\kappa(A)= {\tt kappa}$. We report our numerical results for {\rm(a)} ${\tt mode}=1$: the large eigenvalue is $1$ and the rest  of the eigenvalues are
$1/{\tt kappa}$, {\rm(b)} ${\tt mode}=2$: the small eigenvalue is $1/{\tt kappa}$ and the rest  of the eigenvalues are   $1$,  {\rm(c)} ${\tt mode}=3$: geometrically distributed eigenvalues, {\rm(d)} ${\tt mode}=4$: arithmetically distributed eigenvalues, and {\rm(e)} ${\tt mode}=5$: random eigenvalues with uniformly distributed logarithm.
\end{example}

\begin{example} \label{ex2}
We consider the case of multiple eigenvalues. Let $A=P_*\diag({\tt lam}\otimes\bfo_r) P_*^T$ be an $n\times n$ random symmetric and positive definite matrix with $n=rs$, where the orthogonal matrix $P_*\in\Rnn$ is randomly generated by the built-in functions {\tt randn} and {\tt orth} in {\tt MATLAB R2022a} and the vector ${\tt lam}\in\R^s$ of exact eigenvalues is generated as follows: {\rm(a)}  ${\tt mode}=1$:  {\tt lam=[1; ones(s-1,1)*1/kappa]}, {\rm(b)} ${\tt mode}=2$:   {\tt lam=[ones(s-1,1); 1/kappa]}, {\rm(c)} ${\tt mode}=3$:   {\tt lam=kappa.*linspace(-1,0,s)}, {\rm(d)} ${\tt mode}=4$:   {\tt lam=1-(1-1/kappa).*linspace(0,1,s)}, {\rm(e)} ${\tt mode}=5$:   {\tt lam=kappa.*} {\tt (-rand(s,1))}.
\end{example}

In Table \ref{tab:JacobiEVD}, we report the numerical results for Example \ref{ex1}. We observe from Table \ref{tab:JacobiEVD} that Algorithm {\rm\ref{alg:evd-main}} preserves the high accuracy of Algorithm {\rm\ref{alg:rcJacobi}} for all test matrices and even produces a sightly better orthonormality of $P$  than Algorithm {\rm\ref{alg:rcJacobi}} (especially for the cases that ${\tt mode}=3, 4, 5$). Moreover, Algorithm {\rm\ref{alg:evd-main}} works much better than  v for the cases that ${\tt mode}=3, 4, 5$ in terms of the computing time, the number of rotations, and sweeps. 

\begin{sidewaystable}
	\caption{Numerical results for Example \ref{ex1} with $n=512$.}\label{tab:JacobiEVD} \vskip 0.2mm
	\centering
	\small{
		\begin{tabular*}{\textwidth}{@{\extracolsep{\fill}}cccccccccccccc}
			
				&  \multicolumn{3}{c} {{\tt LAPACKE\_dgesvd}} & \multicolumn{5}{c}{ Alg. \ref{alg:rcJacobi} } & \multicolumn{5}{c}{Alg. \ref{alg:evd-main} } \\
					\cline{2-4} \cline{5-9} \cline{10-14}
			$({\tt mode},{\tt kappa})$ & {\tt Res.} & {\tt OR-P.} & {\tt CT.} & {\tt Res.}  &  {\tt OR-P.}  &  {\tt JU.}  &  {\tt CT.} & {\tt SP.} & {\tt Res.}  &  {\tt OR-P.}  & {\tt JU.} &  {\tt CT.} & {\tt SP.} \\ \midrule
			$(1,10^3)$ & 1.62e-15 & 3.11e-15 & 0.05 & 1.60e-15 & 8.11e-16 & 0.178N & 0.08 & 1 & 1.76e-15 & 1.13e-15 & 0.249N & 0.16 & 1 \\
			$(2,10^3)$ & 1.25e-15 & 3.33e-15 & 0.02 & 1.27e-15 & 1.06e-16 & 0.004N & $<$0.01 & 1 & 1.44e-15 & 7.53e-16 & 0.004N & 0.04 & 1 \\
			$(3,10^3)$ & 2.54e-15 & 4.78e-15 & 0.03 & 4.44e-15 & 4.63e-15 & 9.183N & 3.41 & 12 & 2.50e-15 & 2.31e-15 & 1.988N & 0.79 & 2 \\
			$(4,10^3)$ & 3.30e-15 & 5.46e-15 & 0.03 & 4.65e-15 & 4.46e-15 & 8.555N & 3.18 & 10 & 1.96e-15 & 2.30e-15 & 1.987N & 0.79 & 2 \\
			$(5,10^3)$ & 2.56e-15 & 5.00e-15 & 0.03 & 4.71e-15 & 4.65e-15 & 9.318N & 3.44 & 12 & 2.48e-15 & 2.28e-15 & 1.989N & 0.80 & 2 \\
			\hline
			$(1,10^4)$ & 1.70e-15 & 3.44e-15 & 0.03 & 1.61e-15 & 1.44e-15 & 0.729N & 0.28 & 1 & 1.83e-15 & 1.62e-15 & 0.815N & 0.36 & 1 \\
			$(2,10^4)$ & 1.25e-15 & 3.18e-15 & 0.02 & 1.26e-15 & 1.00e-16 & 0.004N & $<$0.01 & 1 & 1.42e-15 & 7.43e-16 & 0.004N & 0.04 & 1 \\
			$(3,10^4)$ & 2.38e-15 & 4.62e-15 & 0.03 & 4.68e-15 & 4.69e-15 & 9.366N & 3.46 & 13 & 2.76e-15 & 2.30e-15 & 2.009N & 0.81 & 3 \\
			$(4,10^4)$ & 3.43e-15 & 5.48e-15 & 0.03 & 4.93e-15 & 4.45e-15 & 8.506N & 3.17 & 10 & 2.33e-15 & 2.28e-15 & 1.980N & 0.79 & 2 \\
			$(5,10^4)$ & 2.34e-15 & 4.79e-15 & 0.03 & 4.66e-15 & 4.74e-15 & 9.553N & 3.56 & 13 & 2.64e-15 & 2.29e-15 & 1.990N & 0.81 & 2 \\
			\hline
			$(1,10^5)$ & 1.58e-15 & 3.78e-15 & 0.03 & 1.88e-15 & 1.65e-15 & 0.963N & 0.36 & 1 & 2.07e-15 & 1.73e-15 & 0.981N & 0.43 & 1 \\
			$(2,10^5)$ & 1.25e-15 & 3.29e-15 & 0.02 & 1.28e-15 & 9.90e-17 & 0.004N & $<$0.01 & 1 & 1.43e-15 & 7.46e-16 & 0.004N & 0.04 & 1 \\
			$(3,10^5)$ & 2.26e-15 & 4.73e-15 & 0.03 & 4.90e-15 & 4.79e-15 & 9.842N & 3.63 & 14 & 2.72e-15 & 2.31e-15 & 2.036N & 0.84 & 3 \\
			$(4,10^5)$ & 3.30e-15 & 5.49e-15 & 0.03 & 4.71e-15 & 4.49e-15 & 8.572N & 3.19 & 10 & 2.05e-15 & 2.28e-15 & 1.986N & 0.80 & 2 \\
			$(5,10^5)$ & 2.12e-15 & 4.45e-15 & 0.03 & 4.90e-15 & 4.76e-15 & 9.748N & 3.62 & 15 & 2.77e-15 & 2.30e-15 & 2.020N & 0.82 & 3 \\
			\hline
			$(1,10^6)$ & 1.52e-15 & 4.11e-15 & 0.03 & 1.59e-15 & 1.62e-15 & 0.991N & 0.37 & 1 & 1.89e-15 & 1.74e-15 & 0.998N & 0.43 & 1 \\
			$(2,10^6)$ & 1.23e-15 & 3.22e-15 & 0.02 & 1.27e-15 & 9.88e-17 & 0.004N & $<$0.01 & 1 & 1.43e-15 & 7.58e-16 & 0.004N & 0.04 & 1 \\
			$(3,10^6)$ & 2.20e-15 & 4.78e-15 & 0.03 & 4.85e-15 & 4.86e-15 & 10.043N & 3.74 & 16 & 2.65e-15 & 2.34e-15 & 2.088N & 0.85 & 3 \\
			$(4,10^6)$ & 3.34e-15 & 5.40e-15 & 0.03 & 4.62e-15 & 4.46e-15 & 8.507N & 3.19 & 10 & 2.04e-15 & 2.28e-15 & 1.986N & 0.80 & 2 \\
			$(5,10^6)$ & 2.09e-15 & 4.26e-15 & 0.03 & 4.90e-15 & 4.86e-15 & 10.211N & 3.77 & 16 & 2.63e-15 & 2.34e-15 & 2.104N & 0.85 & 4 \\
	\end{tabular*}
	}
\end{sidewaystable}


To  show the effectiveness of the initial guess $Q$ generated by Algorithm \ref{alg:evd-main}, in Figure \ref{fig:modeL4R5}, we plot the quantities $\|\off(A)\|_F$, $\|\off(T_0)\|_F$, ${\tt bd}/10$, ${\tt bd}$, $10{\tt bd}$, and $d(A)/4$ versus the dimension $n$ for Example \ref{ex1} with ${\tt mode}=4$ (left) and ${\tt mode}=5$ (right), respectively.  Here, $T_0=\fl(Q^TAQ)$,
$d(A)=\min_{\la_i(A) \ne \la_j(A)} | \la_i(A) - \la_j(A) |$ and ${\tt bd}=n\| A \| \upsilon$ is an approximation estimate of the theoretical bound $\psi_1 \| A \| \upsilon$, which is obtained in Theorem \ref{thm:conda}.  We also see from Table \ref{tab:JacobiEVD}  and Figure \ref{fig:modeL4R5} that   Algorithm {\rm\ref{alg:evd-main}} is much efficient  over Algorithm {\rm\ref{alg:rcJacobi}}, where the preprocessed matrix $Q$ is such that $\|\off(T_0)\|_F$ is much less than $\|\off(A)\|_F$, despite the quantity $\psi_1\|A\|\upsilon$ is not necessarily less than $d(A)/4$.
Moreover, in Table \ref{tab:JacobiEVD-comp}, we compare the performance of Algorithm \ref{alg:evd-main}, where the starting guess $Q$ was computed in two ways: both eig and MGS in double precision and eig in single precision and MGS in double precision. Here, ``{\tt init-Res.}",  ``{\tt init-OR-Q.}" mean the computed relative residual $\|AZ-Z\Phi\|_F/\|A\|_F$ and the measure  of orthonormality $\|Q^TQ-I_n\|_F/\sqrt{n}$ of $Q$, respectively. We see from Tables \ref{tab:JacobiEVD}--\ref{tab:JacobiEVD-comp} that the starting guess via the mixed precision may accelerate the Jacobi iteration. Even  both eig and MGS are implemented in double precision, the Jacobi iteration can further improve the accuracy. 

To investigate the sensitivity of the lower precision in the proposed algorithm, we utilize {\it Advanpix} to simulate the performance of  Algorithm \ref{alg:evd-main} for Example \ref{ex2} with different precisions. Figures \ref{fig:qc-mode35}--\ref{fig:qc-mode4} describe $\| \off(T_{jN}) \|_F$ versus $j$ (the number of sweeps) for various choices of $\upsilon$.  We observe from Figures \ref{fig:qc-mode35}--\ref{fig:qc-mode4} that Algorithm \ref{alg:evd-main} is much more efficient than  Algorithm {\rm\ref{alg:rcJacobi}} for  different precisions, especially for the original matrices with uniform distribution of eigenvalues and large condition numbers.

\begin{figure}[!htb]
\centering
\includegraphics[width=\textwidth,height=0.25\textheight]{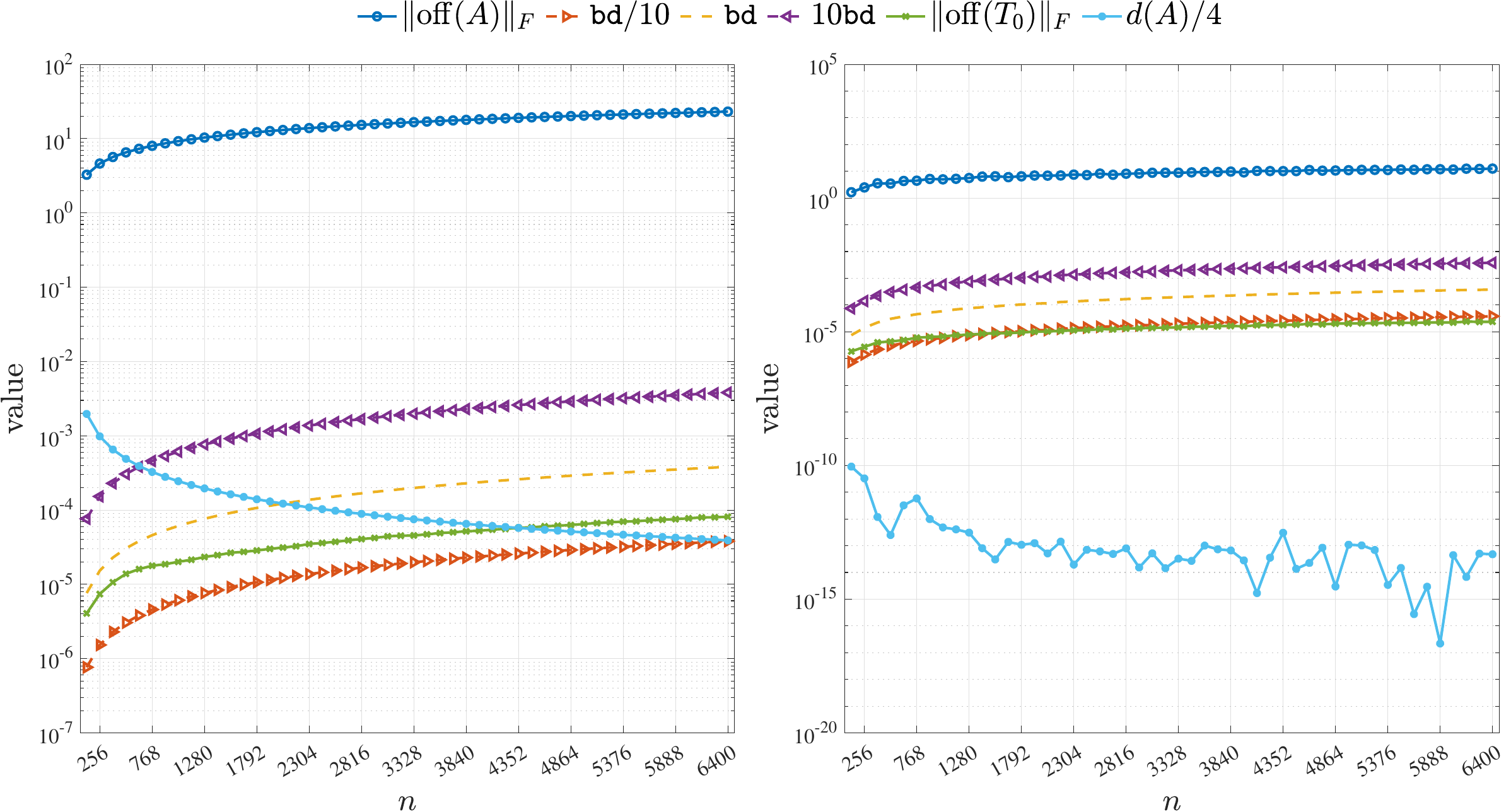}
\caption{Different quantities versus $n$ for Example \ref{ex1}.} \label{fig:modeL4R5}
\end{figure}

\begin{sidewaystable}
	\caption{Numerical results for Example \ref{ex1} with $n=2048$.}\label{tab:JacobiEVD-comp} 
	\centering
	\scriptsize{
		\begin{tabular*}{\textwidth}{@{\extracolsep{\fill}}ccccccccccccccc}
  
          &  \multicolumn{7}{c}{double-double solver} & \multicolumn{7}{c}{single-double solver} \\
            \cline{2-8} \cline{9-15}
			$({\tt mode},{\tt kappa})$ & {\tt init-Res.} & {\tt init-OR-Q.} & {\tt Res.} & {\tt OR-P.} & {\tt JU.} & {\tt CT.} & {\tt SP.} & {\tt init-Res.} & {\tt init-OR-Q.} & {\tt Res.} & {\tt OR-P.} & {\tt JU.} & {\tt CT.} & {\tt SP.}  \\ \midrule
	   $(1,10^3)$ & 2.58e-15 & 1.09e-15 & 1.58e-15 & 1.14e-15 & 0.013N & 2.94 & 1 & 1.11e-06 & 1.39e-15 & 2.99e-15 & 1.48e-15 & 0.025N & 3.36 & 1 \\
            $(2,10^3)$ & 1.79e-15 & 1.19e-15 & 1.42e-15 & 1.19e-15 & 0.001N & 2.44 & 1 & 1.46e-06 & 1.27e-15 & 2.49e-15 & 1.28e-15 & 0.001N & 2.11 & 1 \\
            $(3,10^3)$ & 4.51e-15 & 1.23e-15 & 3.70e-15 & 2.00e-15 & 0.414N & 20.88 & 1 & 2.73e-06 & 1.42e-15 & 4.14e-15 & 4.49e-15 & 2.002N & 76.21 & 3 \\
            $(4,10^3)$ & 6.17e-15 & 1.28e-15 & 4.03e-15 & 1.47e-15 & 0.075N & 6.44 & 1 & 3.18e-06 & 1.42e-15 & 3.24e-15 & 4.49e-15 & 1.994N & 76.14 & 2 \\
            $(5,10^3)$ & 4.71e-15 & 1.24e-15 & 3.82e-15 & 2.01e-15 & 0.421N & 21.40 & 1 & 2.60e-06 & 1.43e-15 & 4.17e-15 & 4.50e-15 & 2.002N & 77.63 & 3 \\
            $(1,10^4)$ & 3.13e-15 & 1.07e-15 & 2.52e-15 & 2.39e-15 & 0.491N & 22.41 & 1 & 1.40e-06 & 1.37e-15 & 3.34e-15 & 2.71e-15 & 0.587N & 27.52 & 1 \\
            $(2,10^4)$ & 1.78e-15 & 1.19e-15 & 1.41e-15 & 1.19e-15 & 0.001N & 2.89 & 1 & 1.45e-06 & 1.28e-15 & 2.50e-15 & 1.28e-15 & 0.001N & 2.75 & 1 \\
            $(3,10^4)$ & 4.48e-15 & 1.22e-15 & 3.47e-15 & 2.32e-15 & 0.640N & 32.79 & 1 & 2.56e-06 & 1.42e-15 & 4.33e-15 & 4.52e-15 & 2.035N & 86.76 & 3 \\
            $(4,10^4)$ & 5.49e-15 & 1.32e-15 & 5.07e-15 & 1.51e-15 & 0.075N & 6.67 & 1 & 2.89e-06 & 1.42e-15 & 2.98e-15 & 4.49e-15 & 1.996N & 84.18 & 2 \\
            $(5,10^4)$ & 4.26e-15 & 1.22e-15 & 3.51e-15 & 2.33e-15 & 0.638N & 30.82 & 1 & 2.56e-06 & 1.43e-15 & 4.31e-15 & 4.51e-15 & 2.023N & 87.41 & 3 \\
            $(1,10^5)$ & 3.60e-15 & 1.08e-15 & 2.78e-15 & 3.12e-15 & 0.919N & 37.86 & 1 & 1.50e-06 & 1.37e-15 & 3.44e-15 & 3.25e-15 & 0.937N & 47.80 & 1 \\
            $(2,10^5)$ & 1.76e-15 & 1.19e-15 & 1.41e-15 & 1.19e-15 & 0.001N & 2.67 & 1 & 1.39e-06 & 1.27e-15 & 2.50e-15 & 1.26e-15 & 0.001N & 2.35 & 1 \\
            $(3,10^5)$ & 4.63e-15 & 1.21e-15 & 3.29e-15 & 2.51e-15 & 0.761N & 40.33 & 1 & 2.36e-06 & 1.42e-15 & 4.45e-15 & 4.59e-15 & 2.097N & 91.02 & 4 \\
            $(4,10^5)$ & 5.50e-15 & 1.32e-15 & 5.09e-15 & 1.51e-15 & 0.075N & 6.86 & 1 & 2.86e-06 & 1.42e-15 & 3.02e-15 & 4.49e-15 & 1.995N & 76.60 & 2 \\
            $(5,10^5)$ & 3.95e-15 & 1.21e-15 & 3.34e-15 & 2.51e-15 & 0.762N & 35.58 & 1 & 2.08e-06 & 1.42e-15 & 4.43e-15 & 4.59e-15 & 2.099N & 89.77 & 4 \\
            $(1,10^6)$ & 3.47e-15 & 1.08e-15 & 2.87e-15 & 3.22e-15 & 0.992N & 40.33 & 1 & 2.07e-06 & 1.36e-15 & 3.50e-15 & 3.33e-15 & 0.993N & 41.66 & 1 \\
            $(2,10^6)$ & 1.78e-15 & 1.18e-15 & 1.39e-15 & 1.18e-15 & 0.001N & 2.60 & 1 & 1.68e-06 & 1.28e-15 & 2.50e-15 & 1.28e-15 & 0.001N & 2.29 & 1 \\
            $(3,10^6)$ & 4.45e-15 & 1.20e-15 & 3.17e-15 & 2.61e-15 & 0.827N & 42.89 & 1 & 2.07e-06 & 1.42e-15 & 4.54e-15 & 4.65e-15 & 2.179N & 85.75 & 4 \\
            $(4,10^6)$ & 5.47e-15 & 1.32e-15 & 5.05e-15 & 1.51e-15 & 0.076N & 6.64 & 1 & 2.88e-06 & 1.42e-15 & 3.00e-15 & 4.48e-15 & 1.995N & 89.05 & 2 \\
            $(5,10^6)$ & 3.67e-15 & 1.20e-15 & 3.23e-15 & 2.64e-15 & 0.840N & 36.07 & 1 & 1.80e-06 & 1.42e-15 & 4.55e-15 & 4.70e-15 & 2.228N & 87.31 & 4 \\
            \cline{1-15}
		\end{tabular*}
	}
\end{sidewaystable}

\begin{figure}[!htb]
\centering
\includegraphics[width=\textwidth,height=0.25\textheight]{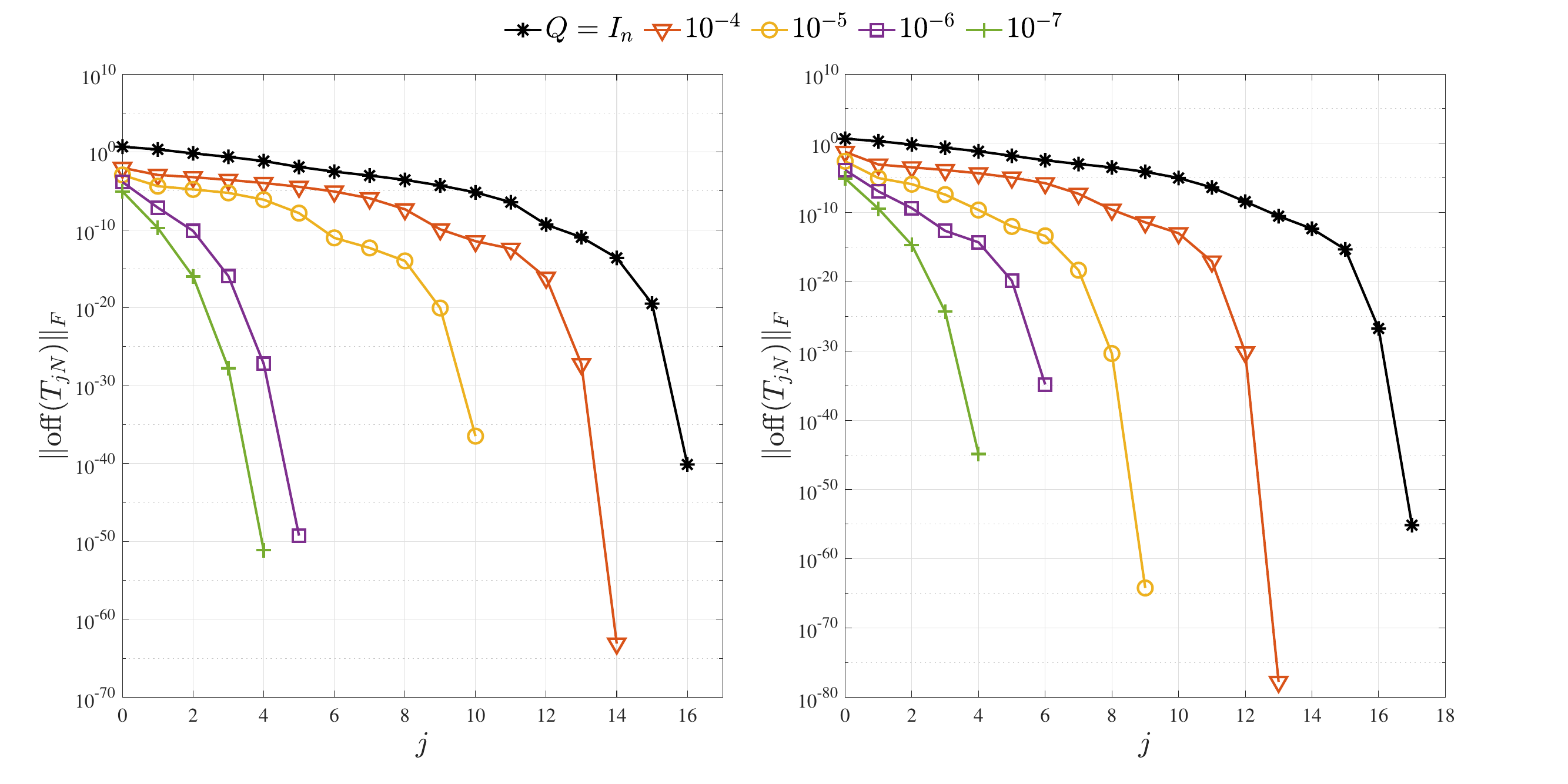}
\caption{ $\| \off(T_{jN}) \|_F$ versus $j$ (sweeps)  for Example \ref{ex2} with $(r,s)=(2,256)$. Left: $(\tt{mode},\tt{kappa}) =(3,10^4)$ and right: $(\tt{mode}, \tt{kappa}) = (5, 10^4)$.} \label{fig:qc-mode35}
\end{figure}

\begin{figure}[!htb]
\centering
\includegraphics[width=\textwidth,height=0.25\textheight]{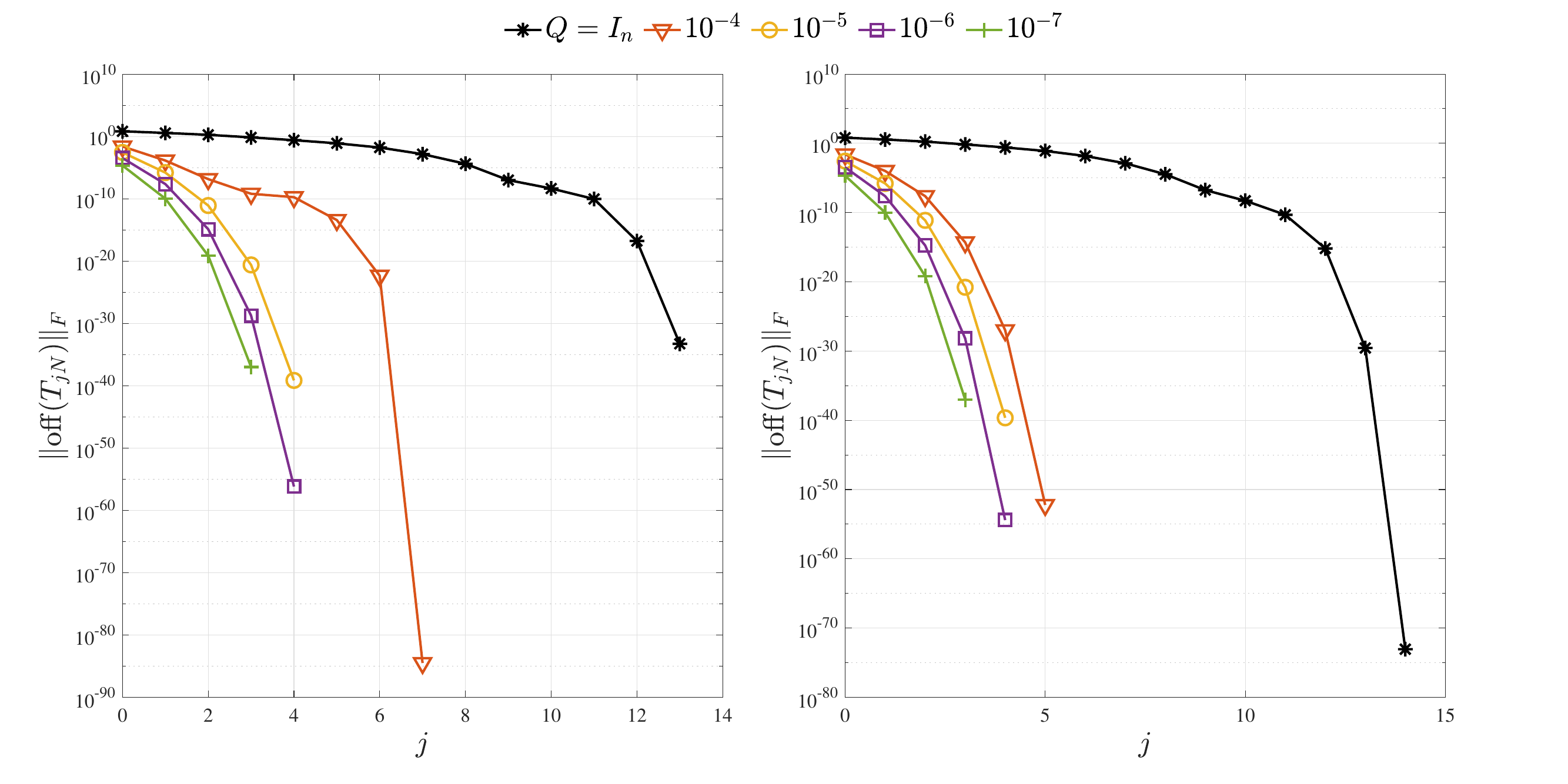}
\caption{$\| \off(T_{jN}) \|_F$ versus $j$ (sweeps)  for Example \ref{ex2} with $(r,s)=(4,128)$. Left: $(\tt{mode},\tt{kappa}) =(4,10^8)$ and right: $(\tt{mode}, \tt{kappa}) = (4, 10^{12})$.} \label{fig:qc-mode4}
\end{figure}


To further illustrate the effectiveness of Algorithm \ref{alg:evd-main}, we also implement  Algorithm  \ref{alg:rcJacobi} and  Algorithm \ref{alg:evd-main} in parallel. Here, we utilize NVIDIA cuSOLVER library \footnote{https://docs.nvidia.com/cuda/cusolver} and NVIDIA cuBLAS library\footnote{https://docs.nvidia.com/cuda/cublas} and the parallel ordering of the rotation set $\{(i,j)\;|\; 1\le i<j\le n\}$ can be taken as some non-overlapping order, e.g., the merry-go-round ordering $(i,j) = (1,2), (3,4),\ldots,(n-1,n),(1,4),\ldots,(n-3,n-1),(1,6),\ldots,(n,n-2)$.
The pre-processing stage in Steps 1--3 of Algorithm \ref{alg:evd-main} was implemented on the NVIDIA CUDA routine, where the function `{\tt cusolverDnSsyevd}` was employed as the eigensolver in precision $\upsilon$ and the MGS method  was replaced by the Householder QR factorization, which can theoretically guarantee higher orthogonality \cite[\S 5.2]{Golub.VanLoan-2013}.

The numerical results for Example \ref{ex1} with different $n$ are displayed in Tables \ref{tab:JacobiEVDcuda-GPU2048}--\ref{tab:JacobiEVDcuda-GPU4096}. Here,   ``{\tt CT-Pre.}"   means  the running time for the pre-processing  stage in Steps 1--3 of (parallel) Algorithm \ref{alg:evd-main} and ``{\tt CT-J.}" means the running time for the stage of  the Jacobi procedure of (parallel) Algorithm \ref{alg:evd-main} or (parallel)  Algorithm  \ref{alg:rcJacobi}. We see from Tables \ref{tab:JacobiEVDcuda-GPU2048}--\ref{tab:JacobiEVDcuda-GPU4096} that, as  parallel Algorithm  \ref{alg:rcJacobi},  parallel Algorithm \ref{alg:evd-main}  can significantly  improve the efficiency of Algorithm \ref{alg:evd-main}. As expected,  Algorithm \ref{alg:evd-main} (parallel version, respectively) works much better than  Algorithm  \ref{alg:rcJacobi} (parallel version, respectively) in terms of the total computing time.


\begin{table}[!htb]
	\caption{Numerical results for Example \ref{ex1} with $n=2048$.} \label{tab:JacobiEVDcuda-GPU2048} \vskip 0.2mm
	\centering
	\scriptsize{
		\begin{tabular*}{\textwidth}{@{\extracolsep{\fill}}cccccccccc}
			& \multicolumn{4}{c}{parallel Alg. \ref{alg:rcJacobi} } & \multicolumn{4}{c}{parallel Alg. \ref{alg:evd-main} } \\
			\cline{2-5} \cline{6-10}
			$({\tt mode},{\tt kappa})$ & {\tt Res.} & {\tt OR-P.} & {\tt CT-J.} & {\tt SP.} & {\tt Res.} & {\tt OR-P.} & {\tt CT-Pre.} & {\tt CT-J.} & {\tt SP.} \\ \midrule
			$(3,10^3)$ & 1.61e-12 & 1.52e-12 & 0.88 & 15 & 2.20e-13 & 2.39e-13 & 0.14 & 0.12 & 3 \\
			$(4,10^3)$ & 1.36e-12 & 1.37e-12 & 0.66 & 12 & 2.24e-13 & 2.24e-13 & 0.14 & 0.12 & 2 \\
			$(5,10^3)$ & 1.65e-12 & 1.58e-12 & 0.85 & 15 & 2.25e-13 & 2.51e-13 & 0.14 & 0.13 & 3 \\
			\hline
			$(3,10^4)$ & 1.75e-12 & 1.63e-12 & 0.94 & 17 & 2.29e-13 & 2.88e-13 & 0.14 & 0.14 & 3 \\
			$(4,10^4)$ & 1.37e-12 & 1.37e-12 & 0.66 & 12 & 2.24e-13 & 2.24e-13 & 0.14 & 0.12 & 2 \\
			$(5,10^4)$ & 1.76e-12 & 1.67e-12 & 0.95 & 17 & 2.33e-13 & 2.96e-13 & 0.14 & 0.15 & 4 \\
			\hline
			$(3,10^5)$ & 1.83e-12 & 1.72e-12 & 1.03 & 18 & 2.45e-13 & 3.70e-13 & 0.14 & 0.17 & 4 \\
			$(4,10^5)$ & 1.38e-12 & 1.38e-12 & 0.66 & 12 & 2.24e-13 & 2.24e-13 & 0.14 & 0.12 & 2 \\
			$(5,10^5)$ & 1.80e-12 & 1.72e-12 & 1.04 & 19 & 2.50e-13 & 3.87e-13 & 0.14 & 0.19 & 5 \\
			\hline
			$(3,10^6)$ & 1.97e-12 & 1.88e-12 & 1.16 & 21 & 2.63e-13 & 4.70e-13 & 0.14 & 0.21 & 5 \\
			$(4,10^6)$ & 1.37e-12 & 1.38e-12 & 0.67 & 12 & 2.24e-13 & 2.24e-13 & 0.14 & 0.12 & 2 \\
			$(5,10^6)$ & 2.04e-12 & 1.94e-12 & 1.17 & 21 & 2.63e-13 & 4.79e-13 & 0.14 & 0.24 & 6 \\
			\bottomrule
	\end{tabular*}
	\begin{tabular*}{\textwidth}{@{\extracolsep{\fill}}cccccccccc}
			 & \multicolumn{4}{c}{Alg. \ref{alg:rcJacobi} } & \multicolumn{4}{c}{Alg. \ref{alg:evd-main} } \\
			\cline{2-5} \cline{6-10}
			$({\tt mode},{\tt kappa})$ & {\tt Res.} & {\tt OR-P.} & {\tt CT-J.} & {\tt SP.}  &  {\tt Res.} & {\tt OR-P.} & {\tt CT-Pre.} & {\tt CT-J.} & {\tt SP.} \\ \midrule
			$(3,10^3)$ & 1.04e-14 & 9.65e-15 & 379.91 & 14 & 4.50e-15 & 4.50e-15 & 2.22 & 76.42 & 3 \\
			$(4,10^3)$ & 1.08e-14 & 9.34e-15 & 358.10 & 11 & 3.70e-15 & 4.48e-15 & 2.17 & 76.09 & 2 \\
			$(5,10^3)$ & 1.03e-14 & 9.75e-15 & 388.71 & 15 & 4.39e-15 & 4.50e-15 & 2.36 & 76.22 & 3 \\
			\hline
			$(3,10^4)$ & 1.03e-14 & 9.81e-15 & 391.40 & 15 & 4.55e-15 & 4.52e-15 & 2.52 & 76.78 & 3 \\
			$(4,10^4)$ & 1.07e-14 & 9.33e-15 & 355.92 & 12 & 3.76e-15 & 4.49e-15 & 2.29 & 74.76 & 2 \\
			$(5,10^4)$ & 1.02e-14 & 9.86e-15 & 390.06 & 15 & 4.61e-15 & 4.52e-15 & 2.43 & 76.44 & 3 \\
			\hline
			$(3,10^5)$ & 9.79e-15 & 9.90e-15 & 394.39 & 16 & 4.67e-15 & 4.58e-15 & 2.43 & 78.35 & 4 \\
			$(4,10^5)$ & 1.04e-14 & 9.28e-15 & 348.81 & 11 & 3.72e-15 & 4.49e-15 & 2.26 & 74.79 & 2 \\
			$(5,10^5)$ & 1.00e-14 & 9.97e-15 & 402.72 & 17 & 4.60e-15 & 4.61e-15 & 2.30 & 79.21 & 4 \\
			\hline
			$(3,10^6)$ & 9.66e-15 & 1.01e-14 & 409.20 & 17 & 4.78e-15 & 4.66e-15 & 2.40 & 81.36 & 3 \\
			$(4,10^6)$ & 1.05e-14 & 9.30e-15 & 351.52 & 11 & 3.40e-15 & 4.49e-15 & 2.37 & 75.61 & 2 \\
			$(5,10^6)$ & 1.00e-14 & 1.01e-14 & 417.28 & 18 & 4.84e-15 & 4.66e-15 & 2.20 & 82.55 & 4 \\
			\bottomrule
	\end{tabular*}
	}
\end{table}

\begin{table}[!htb]
	\caption{Numerical results for Example \ref{ex1} with $n=4096$.} \label{tab:JacobiEVDcuda-GPU4096} \vskip 0.2mm
	\centering
	\scriptsize{
		\begin{tabular*}{\textwidth}{@{\extracolsep{\fill}}cccccccccc}
			& \multicolumn{4}{c}{parallel Alg. \ref{alg:rcJacobi} } & \multicolumn{4}{c}{parallel Alg. \ref{alg:evd-main} } \\
			\cline{2-5} \cline{6-10}
			$({\tt mode},{\tt kappa})$ & {\tt Res.} & {\tt OR-P.} & {\tt CT-J.} & {\tt SP.} & {\tt Res.} & {\tt OR-P.} & {\tt CT-Pre.} & {\tt CT-J.} & {\tt SP.} \\ \midrule
			$(3,10^3)$ & 5.74e-12 & 5.03e-12 & 4.86 & 7 & 4.59e-13 & 5.29e-13 & 0.76 & 0.70 & 1 \\
			$(4,10^3)$ & 4.44e-12 & 4.43e-12 & 4.02 & 6 & 4.60e-13 & 4.61e-13 & 0.75 & 0.69 & 1 \\
			$(5,10^3)$ & 5.41e-12 & 5.08e-12 & 5.10 & 7 & 4.70e-13 & 5.47e-13 & 0.76 & 0.80 & 2 \\
			\hline
			$(3,10^4)$ & 5.77e-12 & 5.65e-12 & 5.36 & 7 & 4.87e-13 & 6.25e-13 & 0.77 & 0.77 & 2 \\
			$(4,10^4)$ & 4.32e-12 & 4.32e-12 & 4.00 & 6 & 4.60e-13 & 4.61e-13 & 0.75 & 0.69 & 1 \\
			$(5,10^4)$ & 6.62e-12 & 5.66e-12 & 5.39 & 7 & 4.97e-13 & 6.33e-13 & 0.77 & 0.91 & 2 \\
			\hline
			$(3,10^5)$ & 6.49e-12 & 5.99e-12 & 5.86 & 8 & 5.18e-13 & 7.90e-13 & 0.78 & 0.86 & 2 \\
			$(4,10^5)$ & 4.35e-12 & 4.35e-12 & 4.00 & 6 & 4.60e-13 & 4.61e-13 & 0.76 & 0.69 & 1 \\
			$(5,10^5)$ & 5.39e-12 & 5.92e-12 & 6.16 & 8 & 5.24e-13 & 7.98e-13 & 0.77 & 0.97 & 2 \\
			\hline
			$(3,10^6)$ & 6.62e-12 & 6.45e-12 & 6.35 & 9 & 5.45e-13 & 1.00e-12 & 0.78 & 1.06 & 2 \\
			$(4,10^6)$ & 4.40e-12 & 4.40e-12 & 4.06 & 6 & 4.60e-13 & 4.61e-13 & 0.76 & 0.70 & 1 \\
			$(5,10^6)$ & 7.13e-12 & 6.55e-12 & 6.56 & 9 & 5.51e-13 & 1.04e-12 & 0.77 & 1.15 & 2 \\
			\bottomrule
	\end{tabular*}
	\begin{tabular*}{\textwidth}{@{\extracolsep{\fill}}cccccccccc}
			 & \multicolumn{4}{c}{Alg. \ref{alg:rcJacobi} } & \multicolumn{4}{c}{Alg. \ref{alg:evd-main} } \\
			\cline{2-5} \cline{6-10}
			$({\tt mode},{\tt kappa})$ & {\tt Res.} & {\tt OR-P.} & {\tt CT-J.} & {\tt SP.}  &  {\tt Res.} & {\tt OR-P.} & {\tt CT-Pre.} & {\tt CT-J.} & {\tt SP.} \\ \midrule
			$(3,10^3)$ & 1.44e-14 & 1.40e-14 & 4977.08 & 15 & 6.02e-15 & 6.35e-15 & 35.95 & 902.30 & 3 \\
			$(4,10^3)$ & 1.50e-14 & 1.36e-14 & 4851.85 & 12 & 4.90e-15 & 6.33e-15 & 34.34 & 928.37 & 2 \\
			$(5,10^3)$ & 1.46e-14 & 1.41e-14 & 5285.85 & 15 & 5.95e-15 & 6.35e-15 & 35.88 & 1027.24 & 3 \\
			\hline
			$(3,10^4)$ & 1.48e-14 & 1.41e-14 & 5695.27 & 16 & 6.13e-15 & 6.40e-15 & 47.86 & 995.31 & 4 \\
			$(4,10^4)$ & 1.50e-14 & 1.36e-14 & 4958.71 & 12 & 4.81e-15 & 6.33e-15 & 33.99 & 966.36 & 3 \\
			$(5,10^4)$ & 1.47e-14 & 1.43e-14 & 5365.20 & 17 & 6.31e-15 & 6.40e-15 & 35.22 & 988.31 & 4 \\
			\hline
			$(3,10^5)$ & 1.45e-14 & 1.43e-14 & 5472.13 & 17 & 6.39e-15 & 6.48e-15 & 34.87 & 1039.77 & 4 \\
			$(4,10^5)$ & 1.53e-14 & 1.36e-14 & 4893.77 & 12 & 4.49e-15 & 6.32e-15 & 34.99 & 956.58 & 2 \\
			$(5,10^5)$ & 1.46e-14 & 1.44e-14 & 5614.83 & 18 & 6.45e-15 & 6.49e-15 & 34.46 & 1086.99 & 4 \\
			\hline
			$(3,10^6)$ & 1.45e-14 & 1.46e-14 & 5907.33 & 18 & 6.59e-15 & 6.62e-15 & 34.87 & 1086.11 & 5 \\
			$(4,10^6)$ & 1.50e-14 & 1.36e-14 & 5048.47 & 12 & 4.42e-15 & 6.33e-15 & 35.06 & 941.44 & 2 \\
			$(5,10^6)$ & 1.45e-14 & 1.46e-14 & 5886.09 & 19 & 6.62e-15 & 6.61e-15 & 34.09 & 1159.67 & 5 \\
			\bottomrule
	\end{tabular*}
	}
\end{table}

The numerical results for Example \ref{ex2} with different $n$ are reported in Tables \ref{tab:JacobiEVDmultiple-GPU2048}--\ref{tab:JacobiEVDmultiple-GPU4096}, where the values of $\|\off(Q^T A Q)\|_F$ and $d(A)/4$ were calculated under CPU environment.
We observe from Tables \ref{tab:JacobiEVDmultiple-GPU2048}--\ref{tab:JacobiEVDmultiple-GPU4096} that parallel Algorithm \ref{alg:evd-main} is much more efficient than Algorithm \ref{alg:evd-main} in terms of the total computing time.

\begin{sidewaystable}
	\caption{Numerical results for Example \ref{ex2} with $n=2048$.} \label{tab:JacobiEVDmultiple-GPU2048} \vskip 0.2mm
	\centering
	\footnotesize{
		\begin{tabular*}{\textwidth}{@{\extracolsep{\fill}}ccccccccccccc}
			& \multicolumn{2}{c}{} & \multicolumn{5}{c}{parallel Alg. \ref{alg:evd-main} } & \multicolumn{5}{c}{Alg. \ref{alg:evd-main} } \\
			\cline{2-3} \cline{4-8} \cline{9-13}
			$({\tt mode},{\tt kappa})$ & $\| \off(Q^T A Q)\|_F$ &$d(A)/4$ &  {\tt Res.} & {\tt OR-P.}  &  {\tt CT-Pre.} & {\tt CT-J.} & {\tt SP.} &  {\tt Res.} & {\tt OR-P.}  &  {\tt CT-Pre.} & {\tt CT-J.} & {\tt SP.} \\ \midrule
			$(1,10^3)$ & 1.97e-06 & 2.50e-01 & 2.52e-14 & 4.82e-15 & 0.13 & 0.07 & 3 & 3.07e-15 & 1.93e-15 & 1.93 & 8.08 & 1 \\
			$(2,10^3)$ & 7.14e-07 & 2.50e-01 & 1.50e-14 & 1.50e-14 & 0.17 & 0.14 & 6 & 4.51e-15 & 1.33e-15 & 1.84 & 0.27 & 1 \\
			$(3,10^3)$ & 2.75e-05 & 3.40e-06 & 3.35e-13 & 3.12e-13 & 0.18 & 0.30 & 6 & 5.02e-15 & 4.48e-15 & 2.46 & 71.36 & 3 \\
			$(4,10^3)$ & 7.19e-05 & 4.89e-04 & 3.11e-13 & 3.04e-13 & 0.19 & 0.24 & 6 & 3.78e-15 & 4.48e-15 & 2.40 & 70.46 & 2 \\
			$(5,10^3)$ & 2.39e-05 & 1.02e-07 & 3.45e-13 & 3.21e-13 & 0.18 & 0.28 & 6 & 5.02e-15 & 4.48e-15 & 2.31 & 70.45 & 3 \\
			\hline
			$(1,10^4)$ & 2.87e-06 & 2.50e-01 & 1.81e-14 & 5.11e-15 & 0.17 & 0.08 & 3 & 3.47e-15 & 3.17e-15 & 2.24 & 31.55 & 1 \\
			$(2,10^4)$ & 6.15e-07 & 2.50e-01 & 7.95e-15 & 7.64e-15 & 0.18 & 0.13 & 5 & 4.56e-15 & 1.32e-15 & 1.82 & 0.27 & 1 \\
			$(3,10^4)$ & 2.23e-05 & 4.55e-07 & 3.39e-13 & 3.39e-13 & 0.19 & 0.28 & 6 & 5.04e-15 & 4.50e-15 & 2.68 & 71.75 & 3 \\
			$(4,10^4)$ & 7.21e-05 & 4.89e-04 & 3.11e-13 & 3.05e-13 & 0.18 & 0.25 & 6 & 3.70e-15 & 4.48e-15 & 2.51 & 70.15 & 2 \\
			$(5,10^4)$ & 2.16e-05 & 1.64e-08 & 3.36e-13 & 3.45e-13 & 0.19 & 0.28 & 6 & 5.14e-15 & 4.51e-15 & 2.33 & 72.04 & 3 \\
			\hline
			$(1,10^5)$ & 3.40e-06 & 2.50e-01 & 2.68e-14 & 5.67e-15 & 0.18 & 0.07 & 3 & 3.62e-15 & 3.33e-15 & 2.43 & 37.14 & 1 \\
			$(2,10^5)$ & 5.19e-07 & 2.50e-01 & 1.27e-14 & 1.23e-14 & 0.17 & 0.14 & 6 & 4.61e-15 & 1.30e-15 & 1.96 & 0.22 & 1 \\
			$(3,10^5)$ & 1.87e-05 & 5.70e-08 & 3.48e-13 & 4.11e-13 & 0.19 & 0.30 & 7 & 5.10e-15 & 4.56e-15 & 2.42 & 73.50 & 3 \\
			$(4,10^5)$ & 7.36e-05 & 4.89e-04 & 3.12e-13 & 3.04e-13 & 0.19 & 0.24 & 6 & 4.48e-15 & 4.48e-15 & 2.40 & 70.82 & 2 \\
			$(5,10^5)$ & 1.77e-05 & 2.00e-09 & 3.44e-13 & 4.15e-13 & 0.18 & 0.30 & 7 & 5.09e-15 & 4.55e-15 & 2.19 & 73.11 & 3 \\
			\hline
			$(1,10^6)$ & 3.48e-06 & 2.50e-01 & 2.34e-14 & 5.06e-15 & 0.18 & 0.05 & 2 & 3.55e-15 & 3.35e-15 & 2.35 & 35.74 & 1 \\
			$(2,10^6)$ & 3.96e-07 & 2.50e-01 & 1.56e-14 & 1.56e-14 & 0.18 & 0.15 & 6 & 4.59e-15 & 1.30e-15 & 1.80 & 0.27 & 1 \\
			$(3,10^6)$ & 1.60e-05 & 6.85e-09 & 3.62e-13 & 5.13e-13 & 0.19 & 0.35 & 8 & 5.24e-15 & 4.63e-15 & 2.26 & 76.06 & 3 \\
			$(4,10^6)$ & 7.33e-05 & 4.89e-04 & 3.12e-13 & 3.05e-13 & 0.19 & 0.24 & 6 & 4.46e-15 & 4.48e-15 & 2.44 & 70.76 & 2 \\
			$(5,10^6)$ & 1.40e-05 & 1.51e-10 & 3.58e-13 & 5.13e-13 & 0.19 & 0.34 & 8 & 5.20e-15 & 4.63e-15 & 2.09 & 76.08 & 5 \\
			\bottomrule
	\end{tabular*}
	}
\end{sidewaystable}

\begin{sidewaystable}
	\caption{Numerical results for Example \ref{ex2} with  $n=4096$.} \label{tab:JacobiEVDmultiple-GPU4096} \vskip 0.2mm
	\centering
	\footnotesize{
		\begin{tabular*}{\textwidth}{@{\extracolsep{\fill}}ccccccccccccc}
			& \multicolumn{2}{c}{} & \multicolumn{5}{c}{parallel Alg. \ref{alg:evd-main} } & \multicolumn{5}{c}{Alg. \ref{alg:evd-main} } \\
			\cline{2-3} \cline{4-8} \cline{9-13}
			$({\tt mode},{\tt kappa})$ & $\| \off(Q^T A Q)\|_F$ &$d(A)/4$ &  {\tt Res.} & {\tt OR-P.}  &  {\tt CT-Pre.} & {\tt CT-J.} & {\tt SP.} &  {\tt Res.} & {\tt OR-P.}  &  {\tt CT-Pre.} & {\tt CT-J.} & {\tt SP.} \\ \midrule
			$(1,10^3)$ & 2.31e-06 & 2.50e-01 & 5.41e-14 & 5.94e-15 & 0.79 & 0.33 & 1 & 4.04e-15 & 1.99e-15 & 35.72 & 23.04 & 1 \\
			$(2,10^3)$ & 5.85e-07 & 2.50e-01 & 1.68e-14 & 1.72e-14 & 0.80 & 0.56 & 2 & 5.89e-15 & 1.71e-15 & 43.40 & 1.97 & 1 \\
			$(3,10^3)$ & 5.65e-05 & 1.69e-06 & 6.93e-13 & 6.91e-13 & 0.81 & 1.82 & 4 & 6.50e-15 & 6.33e-15 & 35.89 & 1040.80 & 3 \\
			$(4,10^3)$ & 1.44e-04 & 2.44e-04 & 6.20e-13 & 6.15e-13 & 0.82 & 1.74 & 3 & 4.83e-15 & 6.32e-15 & 36.15 & 1039.27 & 2 \\
			$(5,10^3)$ & 5.56e-05 & 2.32e-08 & 6.95e-13 & 7.11e-13 & 0.79 & 1.78 & 3 & 6.52e-15 & 6.33e-15 & 42.69 & 1033.98 & 3 \\
			\hline
			$(1,10^4)$ & 3.30e-06 & 2.50e-01 & 5.70e-14 & 5.99e-15 & 0.80 & 0.28 & 1 & 4.67e-15 & 4.17e-15 & 34.12 & 403.45 & 1 \\
			$(2,10^4)$ & 7.41e-07 & 2.50e-01 & 1.95e-14 & 2.02e-14 & 0.81 & 0.71 & 2 & 5.96e-15 & 1.71e-15 & 32.30 & 1.95 & 1 \\
			$(3,10^4)$ & 4.53e-05 & 2.26e-07 & 7.14e-13 & 7.51e-13 & 0.79 & 1.68 & 3 & 6.74e-15 & 6.37e-15 & 35.62 & 1021.47 & 3 \\
			$(4,10^4)$ & 1.44e-04 & 2.44e-04 & 6.15e-13 & 6.10e-13 & 0.80 & 1.75 & 3 & 4.86e-15 & 6.33e-15 & 36.14 & 1012.79 & 2 \\
			$(5,10^4)$ & 4.14e-05 & 1.93e-09 & 7.22e-13 & 7.72e-13 & 0.81 & 1.78 & 3 & 6.77e-15 & 6.37e-15 & 34.37 & 1037.00 & 3 \\
			\hline
			$(1,10^5)$ & 4.30e-06 & 2.50e-01 & 5.64e-14 & 5.88e-15 & 0.79 & 0.31 & 1 & 4.91e-15 & 4.62e-15 & 43.51 & 561.86 & 1 \\
			$(2,10^5)$ & 5.11e-07 & 2.50e-01 & 1.58e-14 & 1.50e-14 & 0.80 & 0.57 & 2 & 5.92e-15 & 1.71e-15 & 58.46 & 1.96 & 1 \\
			$(3,10^5)$ & 3.65e-05 & 2.83e-08 & 7.13e-13 & 8.68e-13 & 0.81 & 1.74 & 3 & 6.87e-15 & 6.45e-15 & 38.55 & 1025.13 & 5 \\
			$(4,10^5)$ & 1.47e-04 & 2.44e-04 & 6.15e-13 & 6.12e-13 & 0.81 & 1.71 & 3 & 5.66e-15 & 6.32e-15 & 61.18 & 950.80 & 2 \\
			$(5,10^5)$ & 3.42e-05 & 1.06e-10 & 7.04e-13 & 8.63e-13 & 0.82 & 1.77 & 3 & 6.92e-15 & 6.46e-15 & 59.68 & 1024.97 & 6 \\
			\hline
			$(1,10^6)$ & 5.16e-06 & 2.50e-01 & 4.78e-14 & 5.97e-15 & 0.79 & 0.31 & 1 & 4.95e-15 & 4.67e-15 & 59.52 & 480.34 & 1 \\
			$(2,10^6)$ & 6.22e-07 & 2.50e-01 & 1.90e-14 & 1.99e-14 & 0.77 & 0.68 & 2 & 5.89e-15 & 1.71e-15 & 55.73 & 1.93 & 1 \\
			$(3,10^6)$ & 3.11e-05 & 3.40e-09 & 7.36e-13 & 1.08e-12 & 0.80 & 1.77 & 3 & 7.01e-15 & 6.59e-15 & 35.89 & 1112.71 & 5 \\
			$(4,10^6)$ & 1.46e-04 & 2.44e-04 & 6.22e-13 & 6.17e-13 & 0.82 & 1.73 & 3 & 5.74e-15 & 6.32e-15 & 62.06 & 1014.28 & 2 \\
			$(5,10^6)$ & 2.83e-05 & 3.18e-11 & 7.53e-13 & 1.17e-12 & 0.79 & 1.88 & 3 & 6.98e-15 & 6.62e-15 & 50.38 & 1185.05 & 7 \\
			\bottomrule
	\end{tabular*}
	}
\end{sidewaystable}



\subsection{The singular value problem}
In this subsection, we compare Algorithms \ref{alg:rcJacobi-svd}--\ref{alg:svd-main} for computing the SVD of a real matrix. We consider the following example.
\begin{example} \label{ex3}
Let $A$ be an $m\times n$ random matrix with pre-assigned singular value generated by  {\tt MATLAB 2022a}'s {\tt gallery ('randsvd', [m,n], kappa, mode)} with $\kappa(A)$ $={\tt kappa}$. We report our numerical results for {\rm(a)} ${\tt mode}=1$: the large singular value is equal to $1$ and the rest  of the singular values are equal to
$1/{\tt kappa}$, {\rm(b)} ${\tt mode}=2$: the small singular value is equal to $1/{\tt kappa}$ and the rest  of the singular values are equal to  $1$,  {\rm(c)} ${\tt mode}=3$: geometrically distributed singular values, {\rm(d)} ${\tt mode}=4$: arithmetically distributed singular values, and {\rm(e)} ${\tt mode}=5$: random singular values with uniformly distributed logarithm.
\end{example}

The numerical results for Example \ref{ex3} are reported in Table \ref{tab:JacobiSVD}. We see from Table \ref{tab:JacobiSVD} that   Algorithm \ref{alg:svd-main} works more efficient than Algorithm \ref{alg:rcJacobi-svd} for the cases ${\tt mode}=3,4,5$.

\begin{sidewaystable}
	\caption{Numerical results for Example \ref{ex3} with $m\times n=2048\times 1024$.}\label{tab:JacobiSVD} \vskip 0.2mm
	\centering
	\small{
		\begin{tabular*}{\textwidth}{@{\extracolsep{\fill}}ccccccccccccc}
			Rule
					& \multicolumn{6}{c}{ Alg. \ref{alg:rcJacobi-svd} } & \multicolumn{6}{c}{Alg. \ref{alg:svd-main}} \\
					\cline{2-7} \cline{8-13}
			$({\tt mode},{\tt kappa})$ &  {\tt Res.} & {\tt OR-U.} &  {\tt OR-V.}  & {\tt JU.} &  {\tt CT.}  &  {\tt SP.}  &  {\tt Res.} & {\tt OR-U.}  &  {\tt OR-V.}  &  {\tt JU.} &  {\tt CT.} &  {\tt SP.}  \\ \midrule
	        $(1,10^3)$ & 1.26e-15 & 1.12e-14 & 4.71e-16 & 0.011N & 4.25 & 4 & 1.82e-15 & 1.78e-14 & 1.19e-15 & 0.006N & 8.98 & 3 \\
			$(2,10^3)$ & 1.34e-16 & 2.35e-15 & 1.06e-16 & 0.004N & 2.13 & 2 & 8.96e-16 & 2.56e-15 & 1.15e-15 & 0.002N & 6.81 & 1 \\
			$(3,10^3)$ & 6.48e-15 & 2.36e-14 & 7.01e-15 & 10.501N & 71.07 & 18 & 2.81e-15 & 2.66e-14 & 3.04e-15 & 1.737N & 19.72 & 5 \\
			$(4,10^3)$ & 6.47e-15 & 2.08e-14 & 6.58e-15 & 9.262N & 61.67 & 15 & 3.08e-15 & 1.87e-14 & 3.19e-15 & 1.905N & 20.50 & 5 \\
			$(5,10^3)$ & 6.51e-15 & 2.33e-14 & 7.03e-15 & 10.642N & 72.66 & 19 & 2.81e-15 & 2.68e-14 & 3.04e-15 & 1.719N & 19.61 & 5 \\
			\hline
			$(1,10^4)$ & 1.45e-15 & 3.58e-14 & 1.03e-15 & 0.140N & 4.91 & 4 & 2.29e-15 & 4.18e-14 & 2.20e-15 & 0.717N & 17.84 & 8 \\
			$(2,10^4)$ & 1.41e-16 & 2.28e-15 & 1.13e-16 & 0.004N & 2.06 & 2 & 8.99e-16 & 2.55e-15 & 1.15e-15 & 0.003N & 6.77 & 1 \\
			$(3,10^4)$ & 6.56e-15 & 2.57e-14 & 7.24e-15 & 11.291N & 78.41 & 21 & 2.75e-15 & 2.57e-14 & 3.02e-15 & 1.729N & 19.76 & 5 \\
			$(4,10^4)$ & 6.49e-15 & 2.08e-14 & 6.59e-15 & 9.316N & 63.23 & 16 & 3.08e-15 & 1.85e-14 & 3.18e-15 & 1.909N & 20.62 & 5 \\
			$(5,10^4)$ & 6.61e-15 & 2.52e-14 & 7.34e-15 & 11.570N & 79.64 & 21 & 2.75e-15 & 2.49e-14 & 3.03e-15 & 1.740N & 19.54 & 5 \\
			\hline
			$(1,10^5)$ & 2.34e-15 & 4.33e-14 & 2.88e-15 & 1.652N & 15.99 & 8 & 3.13e-15 & 4.44e-14 & 3.85e-15 & 2.879N & 28.65 & 9 \\
			$(2,10^5)$ & 1.33e-16 & 2.27e-15 & 1.10e-16 & 0.004N & 1.98 & 2 & 8.98e-16 & 2.54e-15 & 1.15e-15 & 0.003N & 6.56 & 1 \\
			$(3,10^5)$ & 6.65e-15 & 2.77e-14 & 7.53e-15 & 12.376N & 83.99 & 24 & 2.73e-15 & 2.52e-14 & 3.01e-15 & 1.785N & 20.54 & 6 \\
			$(4,10^5)$ & 6.52e-15 & 2.10e-14 & 6.62e-15 & 9.373N & 62.79 & 16 & 3.08e-15 & 1.85e-14 & 3.18e-15 & 1.909N & 20.33 & 5 \\
			$(5,10^5)$ & 6.64e-15 & 2.69e-14 & 7.51e-15 & 12.239N & 84.24 & 24 & 2.73e-15 & 2.47e-14 & 3.02e-15 & 1.787N & 19.22 & 5 \\
			\hline
			$(1,10^6)$ & 3.12e-15 & 4.32e-14 & 4.25e-15 & 3.760N & 26.93 & 9 & 3.66e-15 & 4.35e-14 & 4.71e-15 & 4.471N & 35.79 & 9 \\
			$(2,10^6)$ & 1.35e-16 & 2.29e-15 & 1.07e-16 & 0.004N & 1.96 & 2 & 8.98e-16 & 2.56e-15 & 1.16e-15 & 0.003N & 6.33 & 1 \\
			$(3,10^6)$ & 6.68e-15 & 2.95e-14 & 7.70e-15 & 13.002N & 88.17 & 26 & 2.71e-15 & 2.31e-14 & 3.03e-15 & 1.862N & 19.23 & 5 \\
			$(4,10^6)$ & 6.50e-15 & 2.04e-14 & 6.61e-15 & 9.364N & 60.09 & 16 & 3.08e-15 & 1.85e-14 & 3.18e-15 & 1.908N & 18.24 & 4 \\
			$(5,10^6)$ & 6.77e-15 & 2.94e-14 & 7.83e-15 & 13.426N & 93.02 & 27 & 2.72e-15 & 2.35e-14 & 3.01e-15 & 1.845N & 18.65 & 4 \\
			\bottomrule
	\end{tabular*}
	}
\end{sidewaystable}

\begin{sidewaystable}
	\caption{Numerical results for Example \ref{ex3} with $m\times n=4096\times 2048$.}\label{tab:JacobiSVD-4096} \vskip 0.2mm
	\centering
	\small{
		\begin{tabular*}{\textwidth}{@{\extracolsep{\fill}}ccccccccccccc}
			Rule
					& \multicolumn{6}{c}{ Alg. \ref{alg:rcJacobi-svd} } & \multicolumn{6}{c}{Alg. \ref{alg:svd-main}} \\
					\cline{2-7} \cline{8-13}
			$({\tt mode},{\tt kappa})$ &  {\tt Res.} & {\tt OR-U.} &  {\tt OR-V.}  & {\tt JU.} &  {\tt CT.}  &  {\tt SP.}  &  {\tt Res.} & {\tt OR-U.}  &  {\tt OR-V.}  &  {\tt JU.} &  {\tt CT.} &  {\tt SP.}  \\ \midrule
	        $(1,10^3)$ & 1.66e-15 & 1.40e-14 & 4.66e-16 & 0.004N & 48.28 & 5 & 2.31e-15 & 1.42e-14 & 1.55e-15 & 0.002N & 82.61 & 4 \\
			$(2,10^3)$ & 1.33e-16 & 3.36e-15 & 1.11e-16 & 0.002N & 22.83 & 2 & 9.21e-16 & 3.48e-15 & 1.49e-15 & 0.001N & 59.62 & 1 \\
			$(3,10^3)$ & 9.33e-15 & 4.67e-14 & 1.00e-14 & 10.978N & 944.57 & 19 & 3.89e-15 & 5.57e-14 & 4.26e-15 & 1.742N & 213.95 & 5 \\
			$(4,10^3)$ & 9.39e-15 & 4.26e-14 & 9.52e-15 & 9.831N & 792.14 & 16 & 4.25e-15 & 3.84e-14 & 4.43e-15 & 1.900N & 209.26 & 5 \\
			$(5,10^3)$ & 9.41e-15 & 4.82e-14 & 1.01e-14 & 11.112N & 803.27 & 19 & 3.87e-15 & 5.56e-14 & 4.24e-15 & 1.714N & 188.50 & 5 \\
			\hline
			$(1,10^4)$ & 1.70e-15 & 4.22e-14 & 8.08e-16 & 0.038N & 41.00 & 4 & 2.48e-15 & 7.68e-14 & 1.99e-15 & 0.160N & 144.13 & 9 \\
			$(2,10^4)$ & 1.27e-16 & 3.18e-15 & 1.11e-16 & 0.002N & 19.15 & 2 & 9.21e-16 & 3.49e-15 & 1.50e-15 & 0.001N & 55.17 & 1 \\
			$(3,10^4)$ & 9.41e-15 & 5.14e-14 & 1.04e-14 & 11.794N & 862.50 & 22 & 3.80e-15 & 5.43e-14 & 4.22e-15 & 1.745N & 187.00 & 5 \\
			$(4,10^4)$ & 9.37e-15 & 4.22e-14 & 9.51e-15 & 9.817N & 696.45 & 17 & 4.25e-15 & 3.81e-14 & 4.43e-15 & 1.903N & 214.96 & 5 \\
			$(5,10^4)$ & 9.45e-15 & 5.02e-14 & 1.04e-14 & 11.890N & 877.83 & 23 & 3.81e-15 & 5.34e-14 & 4.23e-15 & 1.754N & 187.01 & 5 \\
			\hline
			$(1,10^5)$ & 2.56e-15 & 8.09e-14 & 2.75e-15 & 0.758N & 136.91 & 9 & 3.79e-15 & 8.64e-14 & 4.54e-15 & 1.976N & 240.94 & 8 \\
			$(2,10^5)$ & 1.32e-16 & 3.16e-15 & 1.10e-16 & 0.002N & 19.27 & 2 & 9.23e-16 & 3.49e-15 & 1.49e-15 & 0.002N & 58.02 & 1 \\
			$(3,10^5)$ & 9.55e-15 & 5.61e-14 & 1.08e-14 & 12.759N & 961.53 & 25 & 3.78e-15 & 4.76e-14 & 4.24e-15 & 1.833N & 182.91 & 4 \\
			$(4,10^5)$ & 9.38e-15 & 4.28e-14 & 9.50e-15 & 9.827N & 690.15 & 16 & 4.25e-15 & 3.81e-14 & 4.43e-15 & 1.904N & 197.22 & 5 \\
			$(5,10^5)$ & 9.51e-15 & 5.42e-14 & 1.07e-14 & 12.522N & 926.41 & 25 & 3.79e-15 & 4.82e-14 & 4.24e-15 & 1.831N & 189.47 & 5 \\
			\hline
			$(1,10^6)$ & 4.06e-15 & 8.67e-14 & 5.31e-15 & 2.979N & 257.62 & 9 & 4.84e-15 & 8.49e-14 & 6.23e-15 & 3.963N & 364.17 & 10 \\
			$(2,10^6)$ & 1.28e-16 & 3.16e-15 & 1.12e-16 & 0.002N & 19.19 & 2 & 9.24e-16 & 3.49e-15 & 1.49e-15 & 0.002N & 55.41 & 1 \\
			$(3,10^6)$ & 9.68e-15 & 6.03e-14 & 1.12e-14 & 13.947N & 1066.63 & 29 & 3.77e-15 & 4.39e-14 & 4.25e-15 & 1.901N & 186.61 & 4 \\
			$(4,10^6)$ & 9.38e-15 & 4.22e-14 & 9.50e-15 & 9.825N & 706.85 & 16 & 4.26e-15 & 3.81e-14 & 4.43e-15 & 1.904N & 202.13 & 5 \\
			$(5,10^6)$ & 9.75e-15 & 5.94e-14 & 1.12e-14 & 13.905N & 1092.29 & 30 & 3.76e-15 & 4.37e-14 & 4.25e-15 & 1.892N & 186.13 & 4 \\
			\bottomrule
	\end{tabular*}
	}
\end{sidewaystable}

\subsection{Numerical verification} \label{sec63}
In this subsection, we will investigate the quantity $\zeta$ in Theorem \ref{thm:qc-rc}.
We note that $(1+2\tilde{\gamma}_4)^n \approx 1$ for $\tilde{\gamma}_4= (1-u)^{-4w}-1$. One may expect that
\[
\zeta_r\approx \tilde{\zeta}_r:= |\tilde{\zeta}_{r1} + \tilde{\zeta}_{r2}| / \mbox{ $\sum_{q=2}^{n-r+1}|\hat{a}_{r,r-1+q}^{(\iota(r-1)+n-r)}|^2$},
\]
where $\tilde{\zeta}_{r1} =\sum_{q=1}^{n-r-1}|\hat{a}_{r,r+1}^{(\iota(r-1)+n-r-1+q)}|^2$ and
\[
\begin{array}{lcl}
\tilde{\zeta}_{r2} &=& \sum_{j=3}^{n-r}\sum_{q=1}^{n-r+1-j}|\hat{a}_{r,r-1+j}^{(\iota(r-1)+(n-r+1)(j-1)-(j-1)j/2+q-1)}|^2\\
& & - \sum_{j=3}^{n-r}\sum_{q=2}^{j-1} |\hat{a}_{r,r-1+q}^{(\iota(r-1)+(n-r+1)(j-1)-(j-1)j/2)}|^2.
\end{array}
\]
Figure {\rm\ref{fig:approx-zeta}} describes the quantity $\tilde{\zeta} = \max_{1\le r\le n-1} \tilde{\zeta}_r$ versus $j$ (the number of sweeps) in double precision for some test matrices of Example \ref{ex1} with different choices of $(\tt{mode}, \tt{kappa})$. We see from Figure {\rm\ref{fig:approx-zeta}} that $\tilde{\zeta}$ and $\zeta$ are of order of $n$. Thus one may expect that the quantities $\tilde{\zeta} \tilde{\gamma}_4$ and $\zeta\tilde{\gamma}_4$ are  not too large since $\tilde{\gamma}_4$ is small enough.

\begin{figure}[!htb]
\centering
\includegraphics[width=0.49\textwidth,height=0.25\textheight]{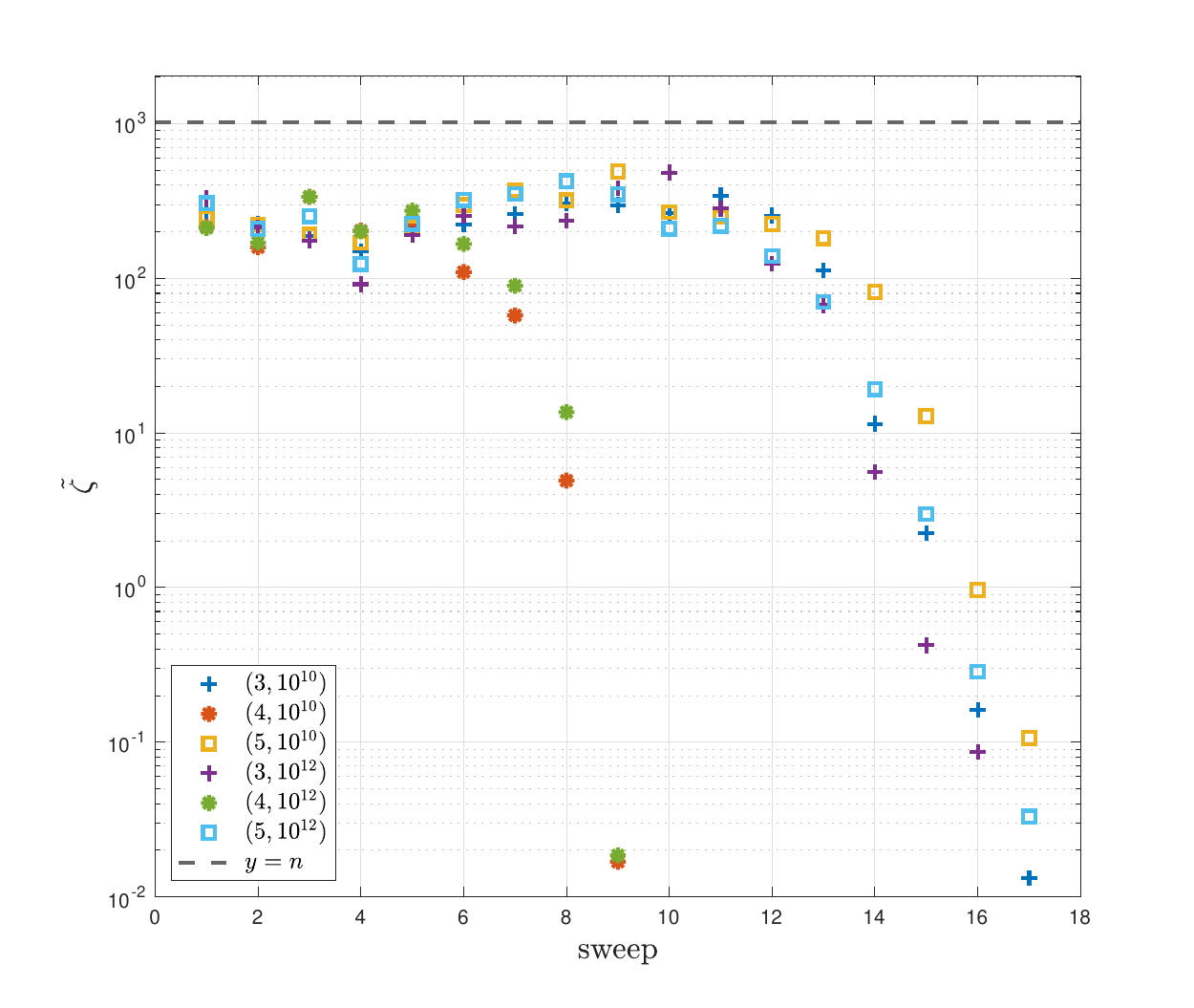}
\includegraphics[width=0.49\textwidth,height=0.25\textheight]{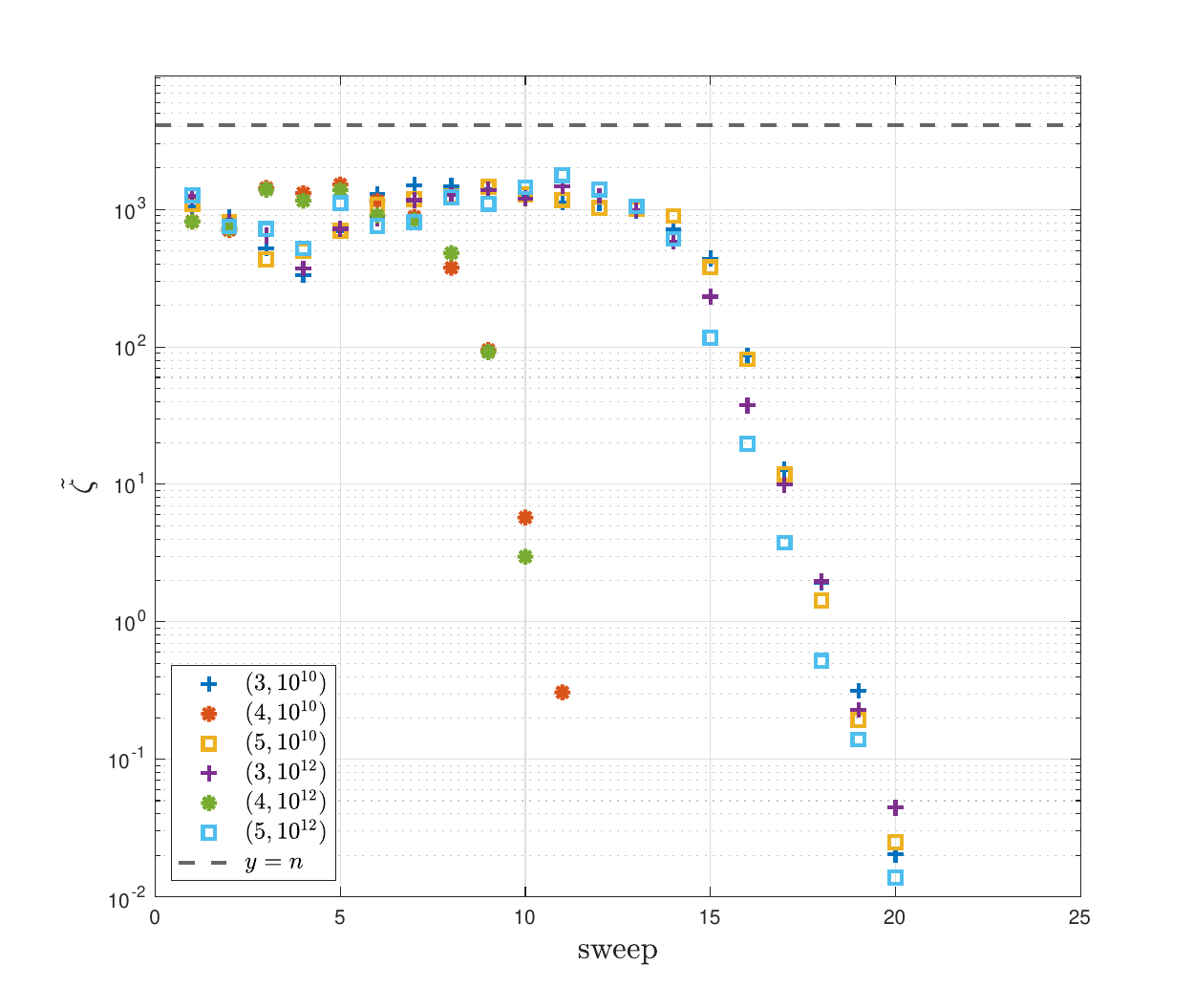}
\caption{$\tilde{\zeta}$ versus $j$ (sweeps) in double precision for  Example \ref{ex1} with different choices of $(\tt{mode}, \tt{kappa})$. Left: $n=1024$ and right: $n=4096$.} \label{fig:approx-zeta}
\end{figure}

%

\section{Conclusions} \label{Sec:Con}
In this paper, we give the error analysis for a single step or sweep of the Jacobi method in floating point arithmetic. Then we propose a mixed precision  preconditioned Jacobi method for computing the eigenvalue decomposition of  a real symmetric matrix and  a mixed precision preconditioned one-sided Jacobi method for the singular value problem. The corresponding rounding error analysis is studied. Our numerical experiments show the efficiency of the proposed mixed precision Jacobi method over the classical Jacobi method. Moreover, our algorithms can achieve higher speedup on GPUs. 
An interesting question is how to develop a mixed precision method for the generalized eigenvalue problem. This needs further study.



\vspace{2mm}
{\bf Conflict of Interest Statement} The authors declare that they have no conflict of interest.

{\bf Data Availability Statement} All data generated or analysed during this study are included in this
manuscript.

\small

\end{document}